\numberwithin{equation}{section}
\theoremstyle{plain}
\newtheorem{theorem}[equation]{Theorem}
\newtheorem{thm}[equation]{Theorem}
\newtheorem{lemma}[equation]{Lemma}
\newtheorem{cor}[equation]{Corollary}
\newtheorem{proposition}[equation]{Proposition}
\newtheorem{conjecture}[equation]{Conjecture}
\newtheorem{prop}[equation]{Proposition}
\newtheorem*{claim*}{Claim}
\newtheorem*{thm*}{Theorem}
\theoremstyle{definition}
\newtheorem{definition}[equation]{Definition}
\newtheorem{informal-definition}[equation]{Informal Definition}
\newtheorem{remark}[equation]{Remark}
\newtheorem{example}[equation]{Example}
\newtheorem{examples}[equation]{Examples}
\newcommand{\isom}{\cong}                       
\newcommand{\homeq}{\simeq}                     
\newcommand{\smsh}{\wedge}                      
\newcommand{\union}{\cup}                       
\newcommand{\Smsh}{\bigwedge}                   
\newcommand{\Wdge}{\bigvee}                     
\newcommand{\Q}{\mathbb{Q}}                     
\newcommand{\cat}[1]{\mathcal{#1}}              
\DeclareMathOperator*{\hocolim}{hocolim}
\DeclareMathOperator*{\colim}{colim}
\DeclareMathOperator*{\holim}{holim}
\newcommand{\Map}{\operatorname{Map} }
\newcommand{\Hom}{\operatorname{Hom} }
\newcommand{\Tot}{\operatorname{Tot} }
\newcommand{\sset}{\mathsf{sSet}_*}            
\newcommand{\spectra}{{\mathsf{Spec}}}              
\newcommand{\weq}{\; \tilde{\longrightarrow} \;}      
\newcommand{\lweq}{\; \tilde{\longleftarrow} \;}      
\newcommand{\epi}{\twoheadrightarrow}           
\newcommand{\dual}{\mathbb{D}}                  
\newcommand{\der}{\partial}                     
\newcommand{\ord}[1]{$#1$\textsuperscript{th}}
\begin{document}

\title{Bar-cobar duality for operads in stable homotopy theory}
\author{Michael Ching}
\thanks{This work was supported in part by NSF Grant DMS-0968221.}

\begin{abstract}
We extend bar-cobar duality, defined for operads of chain complexes by Getzler and Jones, to operads of spectra in the sense of stable homotopy theory. Our main result is the existence of a Quillen equivalence between the category of reduced operads of spectra (with the projective model structure) and a new model for the homotopy theory of cooperads of spectra. The crucial construction is of a weak equivalence of operads between the Boardman-Vogt $W$-construction for an operad $P$, and the cobar-bar construction on $P$. This weak equivalence generalizes a theorem of Berger and Moerdijk that says the $W$- and cobar-bar constructions are isomorphic for operads of chain complexes.

Our model for the homotopy theory of cooperads is based on `pre-cooperads'. These can be viewed as cooperads in which the structure maps are zigzags of maps of spectra that satisfy coherence conditions. Our model structure on pre-cooperads is such that every object is weakly equivalent to an actual cooperad, and weak equivalences between cooperads are detected in the underlying symmetric sequences.

We also interpret our results in terms of a `derived Koszul dual' for operads of spectra, which is analogous to the Ginzburg-Kapranov dg-dual. We show that the double derived Koszul dual of an operad $P$ is equivalent to $P$ (under some finiteness hypotheses) and that the derived Koszul construction preserves homotopy colimits, finite homotopy limits and derived mapping spaces for operads.
\end{abstract}

\maketitle

\thispagestyle{empty}

In an influential paper \cite{ginzburg/kapranov:1994}, Ginzburg and Kapranov describe various types of duality for operads of differential graded vector spaces. For such an operad $P$, they define a `dg-dual' $\mathbf{D}(P)$ such that the double dg-dual $\mathbf{D}(\mathbf{D}(P))$ is quasi-isomorphic to the original operad $P$ (under the condition that the terms in the operad $P$ are finite-dimensional vector spaces). The same authors define a `Koszul dual' $P^{!}$ for quadratic operads and show that under certain conditions, $P^{!}$ is quasi-isomorphic to $\mathbf{D}(P)$. In this case, we can think of $\mathbf{D}(P^{!})$ as a `small' cofibrant replacement for $P$. Algebras over the operad $\mathbf{D}(P^{!})$ play the role of `$P$-algebras up to homotopy'.

These definitions generalize the bar-cobar duality for algebras and coalgebras discovered by Moore \cite{moore:1971}, as well as Priddy's notion of Koszul duality for algebras \cite{priddy:1970}. They also illuminate the relationship between Quillen's models for rational homotopy theory \cite{quillen:1969}. Getzler and Jones, in \cite{getzler/jones:1994}, restated the Ginzburg-Kapranov results in a way that makes the analogy with Moore's work more striking. They describe a bar construction that takes operads to cooperads and, dually, a cobar construction that takes cooperads to operads. These functors are adjoint and determine an equivalence of homotopy categories between augmented operads and connected coaugmented cooperads of differential graded vector spaces \cite[2.17]{getzler/jones:1994}.

The main aim of this paper is to establish an analogous equivalence for operads and cooperads of spectra. In previous work \cite{ching:2005a}, we described analogues of the bar and cobar constructions for operads in any symmetric monoidal category suitably enriched over topological spaces. We prove the following result.

\begin{thm*}
There is a Quillen equivalence of the form
\[ \mathsf{Operad} \underset{C}{\overset{\mathbb{B}}{\rightleftarrows}} \mathsf{PreCooperad} \]
where:
\begin{itemize}
  \item $\mathsf{Operad}$ is the category of reduced operads of spectra with the projective model structure;
  \item $\mathsf{PreCooperad}$ is a new model category that contains the reduced cooperads as a full subcategory, and for which every object is weakly equivalent to a cooperad;
  \item $C$ is an extension of the cobar construction to all pre-cooperads;
  \item the bar construction is weakly equivalent to the left derived functor of $\mathbb{B}$.
\end{itemize}
\end{thm*}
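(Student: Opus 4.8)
First I would construct the model category $\mathsf{PreCooperad}$ together with the extension of the cobar functor, since the whole statement rests on these. I would define a pre-cooperad to be a symmetric sequence $\underline{M}$ of spectra equipped, for every reduced tree $T$, with a natural zigzag of maps of spectra relating $\bigwedge_{v \in T} \underline{M}(|v|)$ to $\underline{M}(\mathrm{leaves}(T))$, subject to coherence conditions under contraction of internal edges; the reduced cooperads form the full subcategory of those pre-cooperads whose structure zigzags are genuine maps. I would then put a model structure on $\mathsf{PreCooperad}$ by a lifting/transfer argument along a forgetful functor to a diagram category of spectra, arranged so that weak equivalences and fibrations are detected, up to a correction accounting for the zigzag data, on underlying symmetric sequences; from this one extracts the two features quoted in the theorem --- every pre-cooperad is weakly equivalent to a cooperad, and weak equivalences between cooperads are precisely the levelwise weak equivalences of underlying symmetric sequences --- and also that every cooperad is fibrant. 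The cobar construction of \cite{ching:2005a} extends to a functor $C$ on all of $\mathsf{PreCooperad}$ by feeding the structure zigzags into the totalisation defining cobar; its left adjoint is the bar construction $\mathbb{B}$, and $(\mathbb{B},C)$ is a Quillen pair since $C$ preserves fibrations and acyclic fibrations by construction. That $\mathbb{B}$ is weakly equivalent to its left derived functor I would check directly: the bar construction is assembled from geometric realisations of iterated smash products and therefore preserves weak equivalences of reduced operads. It then remains to verify the two conditions characterising a Quillen equivalence --- the derived unit a weak equivalence on all cofibrant objects, the derived counit a weak equivalence on all fibrant objects --- that is: (i) for every cofibrant reduced operad $P$, the unit $P \to C\mathbb{B}(P)$ is a weak equivalence (note $\mathbb{B}(P)$ is a cooperad, hence fibrant, so no fibrant replacement is needed); and (ii) for every fibrant pre-cooperad $Q$, the map $\mathbb{B}\big((CQ)^{\mathrm{cof}}\big) \to Q$ is a weak equivalence.

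For (i) I would use the comparison with the Boardman--Vogt construction, which is the crucial point; recalling that the derived unit here is just the unit map $\eta_P$, I would produce a natural weak equivalence of operads $\Phi_P \colon W(P) \weq C\mathbb{B}(P)$ from the $W$-construction to the cobar--bar construction, built so that the triangle $W(P) \to P \xrightarrow{\eta_P} C\mathbb{B}(P)$ commutes, where $W(P) \to P$ is the standard augmentation. For cofibrant $P$ this augmentation is a weak equivalence, so two-out-of-three in that triangle forces $\eta_P$ to be a weak equivalence.

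For (ii) I would first reduce to the case that $Q$ is a cooperad: every fibrant pre-cooperad admits a weak equivalence to or from a cooperad, $C$ preserves weak equivalences of fibrant objects, $\mathbb{B}$ preserves weak equivalences of cofibrant operads, and cofibrant replacement is homotopical, so (ii) for $Q$ is equivalent to (ii) for a weakly equivalent cooperad. For $Q$ a cooperad, $\mathbb{B}\big((CQ)^{\mathrm{cof}}\big)$ is again a cooperad, and using that $\mathbb{B}$ preserves weak equivalences we may replace it by $\mathbb{B}(CQ)$; by the detection property it then suffices to show the counit $\mathbb{B}(CQ) \to Q$ is a levelwise weak equivalence of underlying symmetric sequences. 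This I would establish by induction on the arity $n$: because the operads are reduced, only finitely many trees contribute in each arity, so $\mathbb{B}(CQ)(n)$ and $Q(n)$ are built from the spectra $\bar Q(j)$ with $j \le n$ by finite (co)limits, and the filtration of the cobar construction by number of internal vertices reduces the claim in arity $n$ to the lower-arity cases together with the acyclicity of the linear part of the bar--cobar construction.

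I expect the principal difficulty to be the construction of the comparison map $\Phi_P$. For operads of chain complexes, Berger and Moerdijk identify $W(P)$ with the cobar--bar construction by an \emph{isomorphism}, which relies on the coincidence of certain colimits and limits of chain complexes; over spectra no such coincidence is available, and one must build an explicit homotopy-coherent comparison that is simultaneously $\Sigma_n$-equivariant, multiplicative for the operad structure, and strictly compatible with the augmentation to $P$. I would approach this by interposing an intermediate ``interval-cut'' bar construction --- a cubical object whose realisation both receives a weak equivalence from $W(P)$, by collapsing the edge-length coordinates, and maps by a weak equivalence to the cobar--bar construction, by a cofinality argument carried out one tree at a time --- and then verifying that every map in the resulting zigzag is a map of operads, is natural in $P$, and makes the relevant triangle with $W(P)\to P$ and $\eta_P$ commute on the nose rather than merely up to a further coherent homotopy. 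This last bookkeeping is precisely the point at which the passage from chain complexes to spectra costs real effort.
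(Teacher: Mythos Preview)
Your proposal has the right overall architecture but misidentifies two load-bearing pieces, and one of your arguments is circular.

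\textbf{The model structure.} You describe a transferred model structure in which weak equivalences and fibrations are detected (``up to a correction'') on underlying symmetric sequences. That is the paper's \emph{strict} model structure, and for it the adjunction $(\mathbb{B},C)$ is not a Quillen equivalence, nor is every pre-cooperad weakly equivalent to a cooperad. The model structure the theorem actually uses is a right Bousfield localization of the strict one in which a map $Q \to Q'$ is declared a weak equivalence precisely when $CQ \to CQ'$ is one (the ``$C$-equivalences''). This is what makes ``every object is weakly equivalent to a cooperad'' true essentially by fiat, and it is also why the two-out-of-three arguments in the counit direction go through. Relatedly, a pre-cooperad in the paper is not a symmetric sequence with zigzags: it is a functor $Q:\mathsf{Tree}\to\spectra$ together with grafting maps $Q(T)\smsh Q(U)\to Q(T\cup_i U)$; the cooperads are exactly those for which the grafting maps are isomorphisms.

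\textbf{The unit argument is circular.} You want a map $\Phi_P:WP\to C\mathbb{B}P$ that makes the triangle through $P$ commute on the nose. But then $\Phi_P$ \emph{equals} $\eta_P\circ(\text{augmentation})$, so proving $\Phi_P$ is a weak equivalence is exactly the same as proving the unit is; the triangle gives you no independent leverage. The paper's map $\theta:WP\to CBP$ is built directly from explicit maps of simplicial cubes and does \emph{not} factor through $P$. Its proof that $\theta$ is a weak equivalence is also quite different from your cofinality sketch: one inducts up the truncation tower $P_{\leq n}$, reducing to the case of a \emph{trivial} operad $A$, where $CBA$ can be computed explicitly as $\Omega\Sigma A$ and compared to $WA$ via a small commuting square. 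To know that both columns of the truncation diagram are fiber sequences one needs, as separate input, a zigzag $P\simeq CBP$ of \emph{symmetric sequences} (not of operads), which is itself a nontrivial result proved via two-sided bar/cobar constructions.

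\textbf{The counit.} Your induction-on-arity idea is plausible in spirit, but the paper instead builds a dual ``co-$W$-construction'' $W^cQ$ for pre-cooperads, together with maps $Q\to W^cQ\leftarrow B\tilde{C}Q$, and checks that these are $C$-equivalences by reducing (after applying $C$) to an explicit self-map of $CQ$ induced by collapsing $\bar w(T)$ for $T\neq\tau_I$. Combined with the $\theta$-equivalence, this shows $C$ has both a left and a right homotopy inverse, hence the Quillen pair is a Quillen equivalence, without ever verifying the unit and counit conditions in the form you state them.
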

The cofibrant-fibrant objects in $\mathsf{PreCooperad}$ can be thought of as cooperads in which the structure maps involve inverse weak equivalences in a coherent way. We call these `quasi-cooperads'. The homotopy category of $\mathsf{PreCooperad}$ can be identified with that of these `quasi-cooperads' with weak equivalences detected in the underlying symmetric sequences.

Note that in contrast to the adjunction studied by Getzler and Jones, the cobar construction is the \emph{right} adjoint of the Quillen pair.

The key step in the proof that $\mathbb{B}$ and $C$ form a Quillen equivalence can be stated without mentioning pre-cooperads and is of interest in its own right. This is the construction of a weak equivalence of operads
\[ \theta: WP \weq CBP \]
where $WP$ denotes the Boardman-Vogt $W$-construction \cite{boardman/vogt:1973} for the operad $P$, and $B$ and $C$ are the bar/cobar constructions of \cite{ching:2005a}. This generalizes a result of Berger and Moerdijk \cite{berger/moerdijk:2006} that says $WP$ and $CBP$ are isomorphic for an operad $P$ of chain complexes. The definition of $\theta$ involves detailed consideration of certain simplicial sets associated to categories of trees. (See \ref{def:W-CB}.) We prove that $\theta$ is a weak equivalence by reducing to the case of trivial operads for which the bar-cobar construction can be explicitly evaluated.

We also use Spanier-Whitehead duality to state some of our results purely in terms of operads. For an operad $P$ of spectra, we define a new operad $KP$ to be the Spanier-Whitehead dual of the cooperad $BP$. Thus $KP$ is the analogue of Ginzburg and Kapranov's dg-dual $\mathbf{D}(P)$. Using the map $\theta$ we show that, if the terms of $P$ are weakly equivalent to finite cell spectra, there is a natural equivalence of operads
\[ K(K(P)) \homeq P. \]
We also discuss the effect of the functor $K$ on homotopy limits and colimits, and on mapping objects for operads. In particular, we prove that, for termwise-finite operads $P$ and $P'$,
\[ \widetilde{\Hom}_{\mathsf{Operad}}(P,P') \homeq \widetilde{\Hom}_{\mathsf{Operad}}(KP',KP). \]
where $\widetilde{\Hom}_{\mathsf{Operad}}(-,-)$ denotes the derived mapping space for operads of spectra.

Our main example of this theory concerns Goodwillie's calculus of functors \cite{goodwillie:2003}. In \cite{arone/ching:2009} we proved that
\[ K(\der^*(\Sigma^\infty \Omega^\infty)) \homeq \der_*I \]
where $\der^*(\Sigma^\infty \Omega^\infty)$ is an operad formed by the Spanier-Whitehead duals of the Goodwillie derivatives of the functor
\[ \Sigma^\infty \Omega^\infty: \spectra \to \spectra \]
and $\der_*I$ is an operad formed by the derivatives of the identity functor on based spaces. We now deduce also that
\[ K(\der_*I) \homeq \der^*(\Sigma^\infty \Omega^\infty). \]
In \S\ref{sec:examples} we conjecture that a similar pair of dual operads exists for the identity functor on other categories in which one can do Goodwillie calculus.

One note: we work in this paper only with operads of spectra, where by spectra we really mean the S-modules of EKMM \cite{elmendorf/kriz/mandell/may:1997}. However, many of our constructions can be made in any pointed symmetric monoidal model category $\cat{C}$ suitably enriched over simplicial sets. If the projective model structure on operads in $\cat{C}$ exists then there is a Quillen adjunction as in the Theorem. If the model structure on $\cat{C}$ is \emph{stable} then this adjunction is a Quillen equivalence. In particular, our main Theorem applies also to operads of symmetric or orthogonal spectra.

\subsection*{Summary of the paper}

In \S\ref{sec:$W$-construction} we fix some of our notation for operads and describe the Boardman-Vogt $W$-construction. The key step of the paper appears in \S\ref{sec:bar} where we construct the map of operads $\theta: WP \to CBP$ and show, in Theorem \ref{thm:W-CB}, that it is a weak equivalence.

We prove our main result in \S\ref{sec:precooperads}. We introduce the category of pre-cooperads and its model structure. We describe the left adjoint $\mathbb{B}$ of the cobar construction and show that $\mathbb{B}$ and $C$ form a Quillen equivalence. We then identify the cofibrant-fibrant objects in this model structure with `quasi-cooperads' and show that weak equivalences between quasi-cooperads are detected termwise on the underlying symmetric sequences. In \S\ref{sec:koszul} we use Spanier-Whitehead duality to reinterpret our results in terms of a `derived Koszul dual' functor from operads to operads.

In \S\ref{sec:examples} we discuss the example of bar-cobar duality that arises in Goodwillie calculus and make various conjectures for other examples motivated by Koszul duality results on the algebraic level. These include conjectures for the duals of the stable little $n$-discs operads, and for the operad formed by the Deligne-Mumford compactifications of moduli spaces of Riemann surfaces with marked points.

The final section \S\ref{sec:P-CBP} concerns a generalization of a result of \cite{arone/ching:2009} needed in the proof that the map $\theta: WP \to CBP$ is a weak equivalence.

\subsection*{Acknowledgements}

This project started essentially when I was a graduate student at MIT. Both then and since I have benefitted greatly from conversations with Haynes Miller on these and other topics. Motivation for writing this paper came from joint work with Greg Arone, and results from that work significantly influenced what is here. I'm grateful to Greg for all the help and advice he has provided over the past few years. I'd like to thank the organizers and participants of the Workshop on Operads and Homotopy Theory in Lille in August 2010 for the opportunity to present some of the results of this paper, and for some useful feedback. Finally, thanks to a referee for corrections and suggestions for improvement.

\newpage

\section{Operads of spectra and the $W$-construction} \label{sec:$W$-construction}

We work in the category of $S$-modules described by EKMM \cite{elmendorf/kriz/mandell/may:1997}, which we denote by $\spectra$. We refer to objects in $\spectra$ as \emph{spectra} rather than $S$-modules. Much of this theory could be developed for other models of stable homotopy theory, and in a wider context of simplicial symmetric monoidal model categories, or indeed other types of enriched model categories, such as over chain complexes. It is convenient for us, however, that all objects of $\spectra$ are fibrant, so we restrict to the EKMM case.\footnote{Specifically, we use this condition to show that the cobar construction preserves weak equivalences between all cooperads in Proposition \ref{prop:bar-cobar-homotopy}. This allows us to form a homotopy-invariant cobar-bar construction $CBP$ for an operad $P$. The machinery of \S\ref{sec:precooperads} could be used to circumvent this as it allows us to take a fibrant replacement for $BP$ (in the $C$-model structure on pre-cooperads described in Proposition \ref{prop:model-precooperad}).}

In this section, we recall the definition of an operad in the category $\spectra$, and describe the $W$-construction of Boardman-Vogt \cite{boardman/vogt:1973} in this context.

\begin{definition}[Symmetric sequences] \label{def:symseq}
Let $\mathsf{\Sigma}$ denote the category of nonempty finite sets and bijections. A \emph{symmetric sequence} $A$, in $\spectra$, is a functor $A: \mathsf{\Sigma} \to \spectra$. We denote the category of such symmetric sequences and their natural transformations by $\spectra^{\mathsf{\Sigma}}$.

Equivalently, one can view a symmetric sequence $A$ as a sequence $A(1), A(2),\dots$ of spectra together with a (right) action of the symmetric group $\Sigma_n$ on $A(n)$, for each $n$. The connection between these two viewpoints is that $A(n)$ represents the value of $A$ on the finite set $\{1,\dots,n\}$.
\end{definition}

\begin{definition}[Operads] \label{def:operad}
An \emph{operad} $P$ in $\spectra$ consists of a symmetric sequence $P$ together with \emph{composition maps}, for each finite disjoint union of nonempty finite sets $I = \coprod_{j \in J} I_j$:
\[ P(f): P(J) \smsh \Smsh_{j \in J} P(I_j) \to P(I) \]
and a \emph{unit map} $\eta: S \to P(1)$ where $S$ is the sphere spectrum (i.e. the unit object for the smash product) and $P(1)$ denotes the value of $P$ on the one-element set $\{1\}$. These maps satisfy standard naturality, associativity and unitivity conditions (see \cite[2.2]{ching:2005a}).

An operad is \emph{reduced} if the unit map $\eta$ is an isomorphism between $S$ and $P(1)$. In this paper we consider only reduced operads. A morphism $P \to P'$ of reduced operads consists of a natural transformation between the symmetric sequences $P$ and $P'$ that commutes with the composition and unit maps. We thus obtain a category $\mathsf{Operad}$ of reduced operads in $\spectra$.

In \cite[Appendix]{arone/ching:2009}, we showed that $\mathsf{Operad}$ is enriched, tensored and cotensored over the category $\sset$ of pointed simplicial sets, and that it has a cofibrantly generated simplicial model structure, in which weak equivalences and fibrations are defined termwise. We refer to this as the \emph{projective model structure} on the category of operads.
\end{definition}

\begin{examples} \label{ex:operads}
(1) If $\mathcal{P}$ is an operad of unbased topological spaces (with respect to cartesian product), then we get an operad $\Sigma^\infty_+\mathcal{P}$ in $\spectra$ from
\[ (\Sigma^\infty_+\mathcal{P})(I) := \Sigma^\infty \mathcal{P}(I)_+ \]
with composition maps determined from those of $\mathcal{P}$ by the isomorphisms
\[ \Sigma^\infty (X \times Y)_+ \isom \Sigma^\infty X_+ \smsh \Sigma^\infty Y_+. \]
For example, we have a \emph{stable associative operad} $\mathsf{Ass}$ given by
\[ \mathsf{Ass}(I) = \Sigma^\infty_+ (\Sigma_I) \isom \Wdge_{\Sigma_I} S \]
where $\Sigma_I$ is the symmetric group on the set $I$, and whose algebras are non-unital associative $S$-algebras, and a \emph{stable commutative operad} $\mathsf{Com}$ given by
\[ \mathsf{Com}(I) = S, \]
for all $I$, whose algebras are non-unital commutative $S$-algebras.

(2) In \cite{ching:2005a}, we show that there is an operad $\der_*I$ in spectra whose \ord{n} term is equivalent ($\Sigma_n$-equivariantly) to the \ord{n} Goodwillie derivative of the identity functor $I$ on based spaces. The associated homology operad $H_*(\der_*I)$ is closely related to the Lie operad for graded vector spaces. The main motivation for this paper comes from joint work between the author and Greg Arone \cite{arone/ching:2009} on the relevance of the bar construction for operads in Goodwillie's calculus of functors.

(3) Let $G$ be a topological group. For an operad $\mathcal{P}$ in the symmetric monoidal category of unbased $G$-spaces, Westerland \cite{westerland:2006} constructs interesting operads $\mathcal{P}^{hG}$ and $\mathcal{P}_{bG}$, in spectra, that he calls the \emph{homotopy fixed point operad} and \emph{transfer operad}, respectively. In particular, if $\mathcal{P}$ is the little $2$-discs operad $\mathcal{D}_2$, and $G = S^1$, then the transfer operad $(\mathcal{D}_2)_{bS^1}$ is a spectrum-level version of the `gravity' operad considered by Getzler \cite{getzler:1994} and Ginzburg-Kapranov \cite{ginzburg/kapranov:1994}. (See also Kimura-Stasheff-Voronov \cite[2.5]{kimura/stasheff/voronov:1995}.)
\end{examples}

We now turn to the development of the various constructions for operads that this paper is about. We use certain categories of labelled rooted trees to make these constructions, so we recall those, following much of the terminology of \cite[\S3]{ching:2005a}.

\begin{definition}[Trees] \label{def:trees}
Let $I$ be a nonempty finite set. An \emph{$I$-tree} $T$ is a finite directed non-planar tree with a single terminal vertex (the \emph{root}) and a bijection between $I$ and the set of initial vertices (the \emph{leaves}). The root has exactly one incoming edge, the \emph{root edge}, and no outgoing edge. Each leaf has exactly one outgoing edge, a \emph{leaf edge}, and no incoming edges. The other vertices (the \emph{internal vertices}) have exactly one outgoing edge and at least two incoming edges. For smallness-sake, we restrict the vertices of our trees to lie in some fixed countable set such as $\mathbb{N}$. Thus there is only a set of $I$-trees for any given $I$.

An \emph{isomorphism of $I$-trees} is a bijection between directed graphs that preserves the labelling of the leaves. If such an isomorphism exists, it is unique. For $I$-trees $T,U$, we say that $T \leq U$ if $T$ is isomorphic to a tree obtained by contracting some set of internal edges in $U$. This relation determines a \emph{preorder} on the set of $I$-trees, that is, makes that set into a category in which each set of morphisms has at most one element. We denote this category by $\mathsf{T}(I)$.\footnote{We could construct this category more intrinsically by defining a morphism of $I$-trees to be a surjective function on the vertices of the trees that takes edges either to edges or to single vertices, and that preserves the labelling. If such a morphism exists between two $I$-trees then it is unique. In fact, this choice of morphisms produces the opposite of the category $\mathsf{T}(I)$. We reverse the direction to preserve the connection with the notation of \cite{ching:2005a} and so that $T < U$, in the preorder, when $T$ has fewer vertices than $U$.}

For each nonempty finite set $I$, there is an $I$-tree with no internal edges, that is unique up to isomorphism. We denote a choice of such tree by $\tau_I$. The tree $\tau_I$ is an initial object in $\mathsf{T}(I)$. If $I$ has only one element, then $\tau_I$ has no internal vertices and is the only element of $\mathsf{T}(I)$, up to isomorphism. For an edge $e$ in an $I$-tree $T$, we write $T/e$ for the $I$-tree obtained by contracting the edge $e$ and identifying its endpoints to a new vertex.

A bijection $\sigma: I \arrow{e,t}{\isom} I'$ determines an isomorphism of categories
\[ \sigma_*: \mathsf{T}(I) \arrow{e,t}{\isom} \mathsf{T}(I') \]
where $\sigma_*T$ has the same underlying tree as $T$, with a leaf labelled by $i$ in $T$, instead labelled by $\sigma(i)$ in $\sigma_*T$. We refer to $\sigma_*$ as the \emph{relabelling functor associated to $\sigma$}. In particular a permutation of $I$ determines a relabelling functor from $\mathsf{T}(I)$ to itself.

The following definitions play a particularly important role in this paper. First, if $t$ is an internal vertex of the $I$-tree $T$, we write $I_t$ for the set of incoming edges of $t$. If $U$ is another $I$-tree with $U \geq T$, then we can partition the vertices of $U$ according to the vertices of $T$ that they are identified with when collapsing edges in $U$ to form $T$. For the internal vertex $t$ of $T$, we write $U_t$ for the fragment of the tree $U$ formed by those vertices that collapse to $t$. The tree $U_t$ is naturally labelled by the set $I_t$. The following diagram illustrates this notation.
\end{definition}

\begin{center}
  \begin{picture}(0,0)%
\includegraphics{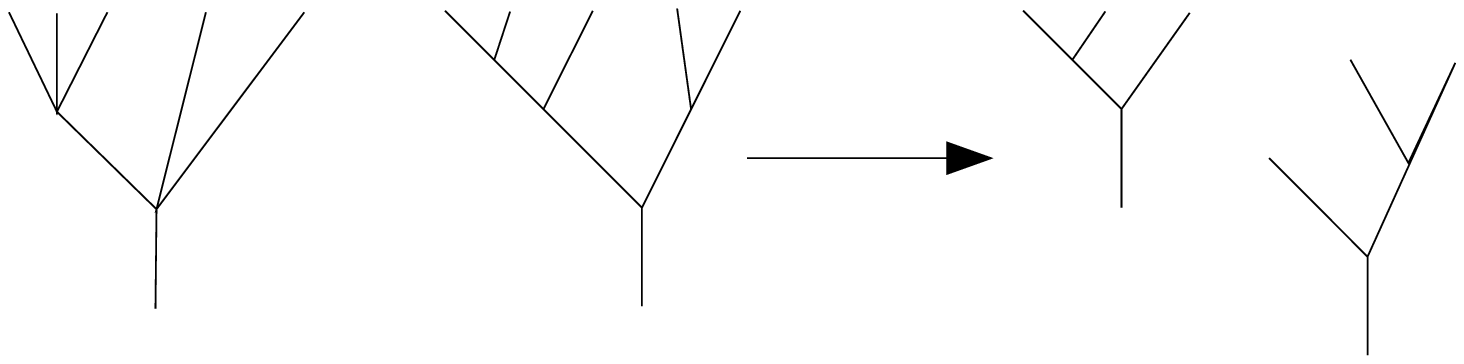}%
\end{picture}%
%
%
\setlength{\unitlength}{3947sp}%
\begingroup\makeatletter\ifx\SetFigFont\undefined%
\gdef\SetFigFont#1#2#3#4#5{%
  \reset@font\fontsize{#1}{#2pt}%
  \fontfamily{#3}\fontseries{#4}\fontshape{#5}%
  \selectfont}%
\fi\endgroup%
\begin{picture}(7036,2325)(564,-1923)
\put(579,231){\makebox(0,0)[lb]{\smash{{\SetFigFont{11}{13.2}{\rmdefault}{\mddefault}{\updefault}{\color[rgb]{0,0,0}$1$}%
}}}}
\put(879,231){\makebox(0,0)[lb]{\smash{{\SetFigFont{11}{13.2}{\rmdefault}{\mddefault}{\updefault}{\color[rgb]{0,0,0}$2$}%
}}}}
\put(1179,231){\makebox(0,0)[lb]{\smash{{\SetFigFont{11}{13.2}{\rmdefault}{\mddefault}{\updefault}{\color[rgb]{0,0,0}$3$}%
}}}}
\put(1606,239){\makebox(0,0)[lb]{\smash{{\SetFigFont{11}{13.2}{\rmdefault}{\mddefault}{\updefault}{\color[rgb]{0,0,0}$4$}%
}}}}
\put(2094,231){\makebox(0,0)[lb]{\smash{{\SetFigFont{11}{13.2}{\rmdefault}{\mddefault}{\updefault}{\color[rgb]{0,0,0}$5$}%
}}}}
\put(2638,223){\makebox(0,0)[lb]{\smash{{\SetFigFont{11}{13.2}{\rmdefault}{\mddefault}{\updefault}{\color[rgb]{0,0,0}$1$}%
}}}}
\put(3036,223){\makebox(0,0)[lb]{\smash{{\SetFigFont{11}{13.2}{\rmdefault}{\mddefault}{\updefault}{\color[rgb]{0,0,0}$2$}%
}}}}
\put(3448,223){\makebox(0,0)[lb]{\smash{{\SetFigFont{11}{13.2}{\rmdefault}{\mddefault}{\updefault}{\color[rgb]{0,0,0}$3$}%
}}}}
\put(3816,231){\makebox(0,0)[lb]{\smash{{\SetFigFont{11}{13.2}{\rmdefault}{\mddefault}{\updefault}{\color[rgb]{0,0,0}$4$}%
}}}}
\put(4183,231){\makebox(0,0)[lb]{\smash{{\SetFigFont{11}{13.2}{\rmdefault}{\mddefault}{\updefault}{\color[rgb]{0,0,0}$5$}%
}}}}
\put(736,-534){\makebox(0,0)[lb]{\smash{{\SetFigFont{11}{13.2}{\rmdefault}{\mddefault}{\updefault}{\color[rgb]{0,0,0}$t$}%
}}}}
\put(1426,-961){\makebox(0,0)[lb]{\smash{{\SetFigFont{11}{13.2}{\rmdefault}{\mddefault}{\updefault}{\color[rgb]{0,0,0}$t'$}%
}}}}
\put(1306,-1621){\makebox(0,0)[lb]{\smash{{\SetFigFont{11}{13.2}{\rmdefault}{\mddefault}{\updefault}{\color[rgb]{0,0,0}$T$}%
}}}}
\put(3616,-1606){\makebox(0,0)[lb]{\smash{{\SetFigFont{11}{13.2}{\rmdefault}{\mddefault}{\updefault}{\color[rgb]{0,0,0}$U$}%
}}}}
\put(5390,209){\makebox(0,0)[lb]{\smash{{\SetFigFont{11}{13.2}{\rmdefault}{\mddefault}{\updefault}{\color[rgb]{0,0,0}$1$}%
}}}}
\put(5870,224){\makebox(0,0)[lb]{\smash{{\SetFigFont{11}{13.2}{\rmdefault}{\mddefault}{\updefault}{\color[rgb]{0,0,0}$2$}%
}}}}
\put(6298,224){\makebox(0,0)[lb]{\smash{{\SetFigFont{11}{13.2}{\rmdefault}{\mddefault}{\updefault}{\color[rgb]{0,0,0}$3$}%
}}}}
\put(7040, 44){\makebox(0,0)[lb]{\smash{{\SetFigFont{11}{13.2}{\rmdefault}{\mddefault}{\updefault}{\color[rgb]{0,0,0}$4$}%
}}}}
\put(7565, 37){\makebox(0,0)[lb]{\smash{{\SetFigFont{11}{13.2}{\rmdefault}{\mddefault}{\updefault}{\color[rgb]{0,0,0}$5$}%
}}}}
\put(5930,-1103){\makebox(0,0)[lb]{\smash{{\SetFigFont{11}{13.2}{\rmdefault}{\mddefault}{\updefault}{\color[rgb]{0,0,0}$U_t$}%
}}}}
\put(7123,-1853){\makebox(0,0)[lb]{\smash{{\SetFigFont{11}{13.2}{\rmdefault}{\mddefault}{\updefault}{\color[rgb]{0,0,0}$U_{t'}$}%
}}}}
\put(1134,-556){\makebox(0,0)[lb]{\smash{{\SetFigFont{11}{13.2}{\rmdefault}{\mddefault}{\updefault}{\color[rgb]{0,0,0}$e$}%
}}}}
\put(6624,-466){\makebox(0,0)[lb]{\smash{{\SetFigFont{11}{13.2}{\rmdefault}{\mddefault}{\updefault}{\color[rgb]{0,0,0}$e$}%
}}}}
\put(2214,-691){\makebox(0,0)[lb]{\smash{{\SetFigFont{11}{13.2}{\rmdefault}{\mddefault}{\updefault}{\color[rgb]{0,0,0}$\leq$}%
}}}}
\end{picture}%

\end{center}

\begin{definition}[Grafting trees] \label{def:grafting}
Let $T$ be an $I$-tree, $U$ a $J$-tree and let $i \in I$. The \emph{grafted tree} $T \cup_i U$ is given by identifying the root edge of $U$ with the leaf edge of $T$ corresponding to the label $i$. The leaves of this tree are naturally labelled by the set $I \cup_i J := (I - \{i\}) \amalg J$.

\begin{center}
  \begin{picture}(0,0)%
\includegraphics{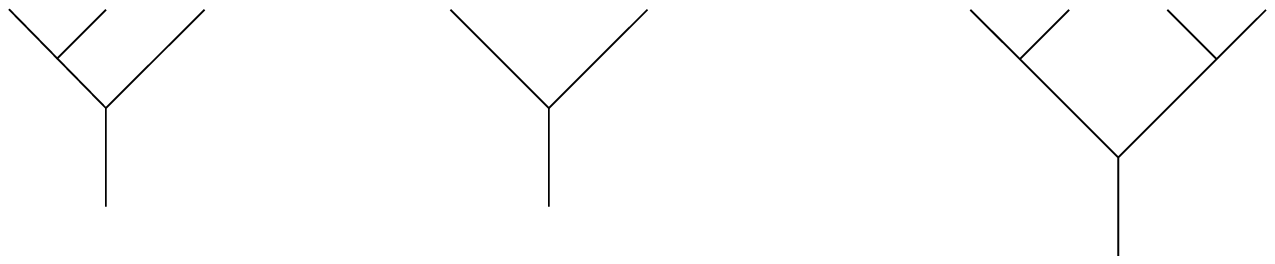}%
\end{picture}%
%
%
\setlength{\unitlength}{3947sp}%
\begingroup\makeatletter\ifx\SetFigFont\undefined%
\gdef\SetFigFont#1#2#3#4#5{%
  \reset@font\fontsize{#1}{#2pt}%
  \fontfamily{#3}\fontseries{#4}\fontshape{#5}%
  \selectfont}%
\fi\endgroup%
\begin{picture}(6112,1868)(654,-1436)
\put(669,261){\makebox(0,0)[lb]{\smash{{\SetFigFont{11}{13.2}{\rmdefault}{\mddefault}{\updefault}{\color[rgb]{0,0,0}$1$}%
}}}}
\put(1149,254){\makebox(0,0)[lb]{\smash{{\SetFigFont{11}{13.2}{\rmdefault}{\mddefault}{\updefault}{\color[rgb]{0,0,0}$2$}%
}}}}
\put(1126,-1156){\makebox(0,0)[lb]{\smash{{\SetFigFont{11}{13.2}{\rmdefault}{\mddefault}{\updefault}{\color[rgb]{0,0,0}$T$}%
}}}}
\put(1659,254){\makebox(0,0)[lb]{\smash{{\SetFigFont{11}{13.2}{\rmdefault}{\mddefault}{\updefault}{\color[rgb]{0,0,0}$i$}%
}}}}
\put(2791,254){\makebox(0,0)[lb]{\smash{{\SetFigFont{11}{13.2}{\rmdefault}{\mddefault}{\updefault}{\color[rgb]{0,0,0}$3$}%
}}}}
\put(3699,269){\makebox(0,0)[lb]{\smash{{\SetFigFont{11}{13.2}{\rmdefault}{\mddefault}{\updefault}{\color[rgb]{0,0,0}$4$}%
}}}}
\put(3249,-1134){\makebox(0,0)[lb]{\smash{{\SetFigFont{11}{13.2}{\rmdefault}{\mddefault}{\updefault}{\color[rgb]{0,0,0}$U$}%
}}}}
\put(5739,-1366){\makebox(0,0)[lb]{\smash{{\SetFigFont{11}{13.2}{\rmdefault}{\mddefault}{\updefault}{\color[rgb]{0,0,0}$T \cup_i U$}%
}}}}
\put(5251,246){\makebox(0,0)[lb]{\smash{{\SetFigFont{11}{13.2}{\rmdefault}{\mddefault}{\updefault}{\color[rgb]{0,0,0}$1$}%
}}}}
\put(5754,246){\makebox(0,0)[lb]{\smash{{\SetFigFont{11}{13.2}{\rmdefault}{\mddefault}{\updefault}{\color[rgb]{0,0,0}$2$}%
}}}}
\put(6249,254){\makebox(0,0)[lb]{\smash{{\SetFigFont{11}{13.2}{\rmdefault}{\mddefault}{\updefault}{\color[rgb]{0,0,0}$3$}%
}}}}
\put(6751,269){\makebox(0,0)[lb]{\smash{{\SetFigFont{11}{13.2}{\rmdefault}{\mddefault}{\updefault}{\color[rgb]{0,0,0}$4$}%
}}}}
\end{picture}%

\end{center}

Note that if $T \leq U$ then the tree $U$ can be formed, up to isomorphism, by grafting together all of the $I_t$-trees $U_t$ for internal vertices $t$ in $T$.
\end{definition}

\begin{definition}[$A(T)$] \label{def:A(T)}
Let $A$ be a symmetric sequence. For an $I$-tree $T$, we define $A(T)$ to be the spectrum
\[ A(T) := \Smsh_{t \in T} A(I_t). \]
This smash product is indexed over the set of internal vertices of the tree $T$ and throughout this paper we write such indexing as over `$t \in T$'. Notice that we have natural isomorphisms
\[ A(\tau_I) \isom A(I) \]
and
\[ A(T \cup_i U) \isom A(T) \smsh A(U). \]
Also notice that $A(T)$ does not actually depend on the labelling of the leaves of $T$ by the elements of $I$. In particular, $A(T) = A(\sigma_*T)$ for any bijection $\sigma: I \arrow{e,t}{\isom} I'$.

An isomorphism $f$ between $I$-trees $T$ and $T'$ determines an isomorphism
\[ A(f): A(T) \arrow{e,t}{\isom} A(T') \]
as follows. Each vertex $t \in T$ corresponds under $f$ to a vertex $f(t) \in T'$. The isomorphism $f$ also determines a bijection between the set $I_t$ of incoming edges of $t$ in $T$, and the set $I_{f(t)}$ of incoming edges of $f(t)$ in $T'$. For each $t$, we therefore get an isomorphism
\[ A(I_t) \arrow{e,t}{\isom} A(I_{f(t)}). \]
Smashing these together over $t \in T$, we get the required isomorphism $A(f)$.
\end{definition}

\begin{lemma} \label{lem:operad-functor}
Let $P$ be a reduced operad. The assignment $T \mapsto P(T)$ determines a functor
\[ P(-): \mathsf{T}(I)^{op} \to \spectra, \]
for each nonempty finite set $I$, in such a way that the isomorphisms
\[ P(T \union_i U) \isom P(T) \smsh P(U) \]
are natural in $T$ and $U$. \qed
\end{lemma}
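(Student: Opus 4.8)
The plan is to build the functor by hand out of iterated operad composition and then check the functor axioms together with the naturality statement, all of which are formal consequences of the associativity and unit axioms for $P$.

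\emph{Construction of the structure maps.} For an arbitrary nonempty finite set $J$ and $J$-tree $V$ I would first define an \emph{iterated composition map} $\mu_V \colon P(V) \to P(J)$ by induction on the number of internal vertices of $V$. If $V$ has none (so $J$ is a singleton) then $P(V)$ is the empty smash $S$ and $\mu_V$ is the unit isomorphism $S \isom P(1) = P(J)$, using that $P$ is reduced. Otherwise let $K$ be the set of incoming edges of the root of $V$, let $V^k$ ($k\in K$) be the subtree of $V$ above the edge $k$, and let $J = \coprod_{k\in K} J_k$ be the induced partition; grafting the $V^k$ onto $\tau_K$ recovers $V$, so Definition~\ref{def:A(T)} gives $P(V) \isom P(K)\smsh\Smsh_k P(V^k)$, and I set $\mu_V$ to be the composite of $\mathrm{id}_{P(K)}\smsh\Smsh_k \mu_{V^k}$ with the operad composition map $P(K)\smsh\Smsh_k P(J_k)\to P(J)$. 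The recursion is well-founded since each $V^k$ has fewer internal vertices than $V$, and unambiguous because an isomorphism of $I$-trees, when it exists, is unique. Now for a morphism $T\leq U$ in $\mathsf{T}(I)$, the tree $U$ is recovered (Definition~\ref{def:grafting}) by grafting the fragments $U_t$ over internal vertices $t$ of $T$, so $P(U)\isom\Smsh_{t\in T} P(U_t)$, while $P(T) = \Smsh_{t\in T} P(I_t)$ by definition; since each $U_t$ is an $I_t$-tree, I set the map $P(-)$ assigns to this morphism to be $\Smsh_{t\in T}\mu_{U_t}\colon P(U)\to P(T)$.

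\emph{Functoriality.} The identity $T\leq T$ has each fragment $U_t$ equal to the one-vertex $I_t$-tree, on which $\mu$ is the identity, so $\Smsh_t\mu_{U_t} = \mathrm{id}_{P(T)}$. For composition, let $T\leq U\leq V$. The collapse $U\to T$ partitions the internal vertices of $U$ over those of $T$, and a fragment of a fragment is again a fragment: for an internal vertex $t$ of $T$ one has $U_t\leq V_t$ in $\mathsf{T}(I_t)$, the internal vertices of $U_t$ are exactly those internal vertices $u$ of $U$ lying over $t$, and $(V_t)_u = V_u$. Smashing over $t$, the identity $\big(\Smsh_t\mu_{U_t}\big)\comp\big(\Smsh_{u\in U}\mu_{V_u}\big) = \Smsh_t\mu_{V_t}$ therefore reduces, one vertex at a time, to the case where $T$ is a corolla, i.e.\ to the associativity statement
\[ \mu_V \;=\; \mu_U \comp \Big(\Smsh_{u\in U}\mu_{V_u}\Big)\colon P(V)\to P(I) . \]
This I would prove by induction on the number of edges contracted in $V\mapsto U$, peeling off the root vertex of $U$ and matching the single operad composition coming from it against the nested compositions inside $\mu_V$; that matching is exactly the associativity axiom of \cite[2.2]{ching:2005a}, with the unit axiom handling the degenerate corollas. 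This associativity induction is the one genuinely non-formal step — the main obstacle — and it is just the operad axioms for $P$ transcribed into the language of trees. (Alternatively, the whole lemma can be read off the standard description of reduced operads as algebras for the free-operad monad associated to the categories $\mathsf{T}(I)$.)

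\emph{Naturality of the grafting isomorphism.} If $T\leq T'$ in $\mathsf{T}(I)$ and $U\leq U'$ in $\mathsf{T}(J)$, then $T\union_i U\leq T'\union_i U'$, and since grafting creates no new internal vertices, the internal vertices of $T'\union_i U'$ are those of $T'$ together with those of $U'$, with corresponding fragments $(T'\union_i U')_t = T'_t$ and $(T'\union_i U')_u = U'_u$. Hence the structure map $P(T'\union_i U')\to P(T\union_i U)$ is, under the canonical isomorphisms, the smash product of the structure maps $P(T')\to P(T)$ and $P(U')\to P(U)$, which is precisely the commutativity of the required square. Everything outside the associativity induction is bookkeeping with canonical isomorphisms.
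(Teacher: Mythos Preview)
Your argument is correct. The paper, however, takes a slightly different and more economical route: rather than defining the image of a general morphism $T\leq U$ all at once via $\Smsh_{t\in T}\mu_{U_t}$ and then verifying functoriality, it defines the functor only on the generating morphisms of $\mathsf{T}(I)^{op}$ --- the single-edge collapses $T \to T/e$ together with the isomorphisms --- and then appeals to the fact that $\mathsf{T}(I)$ is generated by these. The relations among such generators (collapsing two disjoint edges in either order, or collapsing two adjacent edges) are exactly matched by the associativity and naturality axioms for $P$, so functoriality follows without an explicit induction.

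Your approach has the advantage of giving the value on every morphism in closed form, which makes the grafting naturality transparent; the paper's approach has the advantage of reducing the verification to the minimal set of relations in $\mathsf{T}(I)$, so that the ``associativity induction'' you flag as the main obstacle is replaced by a direct appeal to the operad axioms one edge at a time. Both arguments are ultimately the same content, organized differently.
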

\begin{proof}
If $T/e$ is the $I$-tree obtained from $T$ by collapsing the internal edge $e$, identifying its endpoints $u,v$ to a new vertex $u \circ v$, the operad composition determines a map
\[ P(I_u) \smsh P(I_v) \smsh P(1) \smsh \dots \smsh P(1) \to P(I_{u \circ v}) \]
which, since $P$ is reduced, determines a map
\[ P(T) \to P(T/e). \]
If $f: T \to T'$ is an isomorphism of $I$-trees, we have an isomorphism
\[ P(f)^{-1}: P(T') \to P(T) \]
as in Definition \ref{def:A(T)}. Since the category $\mathsf{T}(I)$ is generated by morphisms of the form $T/e \to T$, together with the isomorphisms, these choices are enough to make $P(-)$ into a functor as claimed.
\end{proof}

\begin{definition}[Cooperads] \label{def:cooperad}
A \emph{cooperad} is a symmetric sequence $Q$ together with \emph{decomposition maps}
\[ Q(f): Q(I) \to Q(J) \smsh \Smsh_{j \in J} Q(I_j) \]
for each finite disjoint union $I = \coprod_{j \in J} I_j$, and a counit map $Q(1) \to S$, satisfying coassociativity and counit axioms. The cooperad $Q$ is \emph{reduced} if the counit map is an isomorphism. For a reduced cooperad $Q$, the decomposition maps make the assignments $T \mapsto Q(T)$ into functors
\[ Q(-): \mathsf{T}(I) \to \spectra, \]
for each nonempty finite set $I$, in such a way that the isomorphisms
\[ Q(T \union_i U) \isom Q(T) \smsh Q(U) \]
are natural in $T$ and $U$. A morphism of cooperads is a map of symmetric sequences that commutes with the structure maps. We have a category $\mathsf{Cooperad}$ of reduced cooperads of spectra.
\end{definition}

We now bring in the simplicial enrichment of the category $\spectra$ to define the Boardman-Vogt $W$-construction for operads of spectra (originally from \cite{boardman/vogt:1973}). Berger and Moerdijk \cite{berger/moerdijk:2006} have a general treatment of this construction in a symmetric monoidal category, of which our version is a special case.

\begin{remark} \label{rem:axiom}
Several of the constructions in this paper depend on the following property of the simplicial tensoring in $\spectra$. Let $X,Y \in \spectra$ and $K,L \in \sset$. Then there is a natural isomorphism
\[ d: (K \smsh L) \smsh (X \smsh Y) \arrow{e,t}{\isom} (K \smsh X) \smsh (L \smsh Y) \]
that satisfies appropriate unit and associativity properties (see \cite[1.10]{ching:2005a}).

To define the Boardman-Vogt $W$-construction, we actually only need a map in the backward direction of this isomorphism to exist. However, for the operadic bar construction in Definition \ref{def:bar}, we need a map in the forward direction to exist. To use and compare both the bar and $W$-constructions, as we do in this paper, we need both these maps to exist and be inverse isomorphisms.

%
\end{remark}

%
%
%
%

\begin{definition}[$W$-construction] \label{def:W-construction}
For an $I$-tree $T$, we write
\[ \Delta[T] := \prod_{e \in \mathsf{edge}(T)} \Delta[1] \]
for the product of copies of the standard simplicial interval $\Delta[1]$, indexed by the internal (i.e. non-root and non-leaf) edges of $T$. If $T \leq T'$ in $\mathsf{T}(I)$, there is a map of simplicial sets
\[ \iota_{T,T'}: \Delta[T] \to \Delta[T'] \]
given by setting the `new' edges in $T'$ (i.e. those that one collapses to form $T$) to have value $0$. Thus $\iota_{T,T'}$ is the inclusion of the cube $\Delta[T]$ as a face of $\Delta[T']$. Adding a disjoint basepoint, we obtain a functor $\Delta[-]_+: \mathsf{T}(I) \to \sset$ for each nonempty finite set $I$. We also have natural maps
\[ \mu_{T,i,U}: \Delta[T]_+ \smsh \Delta[U]_+ \to \Delta[T \union_i U]_+. \]
These are again given by the inclusion of one cube as a face of another. This time the `new' edge (that is, the edge of $T \union_i U$ where $T$ and $U$ are joined) is given value $1$. Notice that $\Delta[T]$ does not depend on the labelling of the leaves of $T$ by the elements of $I$. In particular, $\Delta[T] = \Delta[\sigma_*T]$ for any bijection $\sigma: I \arrow{e,t}{\isom } I'$.

Now let $P$ be a reduced operad. We define a new reduced operad $WP$ by
\[ WP(I) := \Delta[T]_+ \smsh_{T \in \mathsf{T}(I)} P(T). \]
This is a coend calculated over the category $\mathsf{T}(I)$ of $I$-trees. A bijection $\sigma: I \arrow{e,t}{\isom} I'$ determines a map $WP(\sigma): WP(I) \arrow{e,t}{\isom} WP(I')$ by identifying the term
\[ \Delta[T]_+ \smsh P(T) \]
in the coend for $WP(I)$ with the term
\[ \Delta[\sigma_*T]_+ \smsh P(\sigma_*T) \]
in the coend for $WP(I')$, by way of the identity map between these equal objects.

The operad structure on $WP$ is given by combining the maps $\mu_{T,i,U}$ with the isomorphisms
\[ P(T) \smsh P(U) \arrow{e,t}{\isom} P(T \union_i U). \]
Note that we use the inverse of the map $d$ of \ref{rem:axiom} to form the operad structure.
\end{definition}

\begin{remark}
Our definition of $WP$ is isomorphic to that denoted $W(H,P)$ by Berger and Moerdijk in \cite{berger/moerdijk:2006}, where $H$ is the `spectral interval'
\[ H := \Sigma^\infty \Delta[1]_+. \]
We often informally think of a `point' in $WP(I)$ as an $I$-tree together with lengths between $0$ and $1$ for its internal edges, and a decoration for each internal vertex $t$ from the object $P(I_t)$. Such a tree $T$ with internal edge $e$ of length zero is identified with the tree $T/e$ with new vertex decorated using the composition map for the operad $P$.
\end{remark}

\begin{definition}[Resolution by the $W$-construction] \label{def:W-map}
For an $I$-tree $T$ and reduced operad $P$, we have maps
\[ \eta_T: \Delta[T]_+ \smsh P(T) \to P(I) \]
given by the collapse map $\Delta[T] \to *$ and the operad composition map $P(T) \to P(I)$. These are natural in $T$ in the appropriate way and so together determine
\[ \eta_I: WP(I) \to P(I). \]
The $\eta_T$ also respect the grafting maps $\mu_{T,i,U}$ and we get a map of operads
\[ \eta: WP \to P. \]
\end{definition}

\begin{lemma} \label{lem:WP-P}
Let $P$ be a reduced operad. Then the map $\eta: WP \to P$ of Definition \ref{def:W-map} is a weak equivalence of operads in the projective model structure.
\end{lemma}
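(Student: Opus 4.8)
The plan is to exhibit $\eta_I\colon WP(I)\to P(I)$ as a simplicial homotopy equivalence for every nonempty finite set $I$; since weak equivalences in the projective model structure are detected termwise and every object of $\spectra$ is fibrant, this suffices. This is the usual deformation‑retraction argument for the $W$‑construction (compare Berger--Moerdijk \cite{berger/moerdijk:2006}); a convenient feature is that it needs no cofibrancy or well‑pointedness hypothesis on $P$, since the homotopy never disturbs the spectra $P(T)$. First I would observe that $\eta_I$ has a section: the canonical map out of the $T=\tau_I$ summand of the coend defining $WP(I)$ is a map $s_I\colon P(I)=\Delta[\tau_I]_+\smsh P(\tau_I)\to WP(I)$ (using $\Delta[\tau_I]\isom\basept$ and $P(\tau_I)\isom P(I)$); because the collapse $\Delta[\tau_I]\to\basept$ is an isomorphism and the relevant operad composition $P(\tau_I)\to P(I)$ is the identity, $\eta_{\tau_I}=\mathrm{id}$, so $\eta_I\comp s_I=\mathrm{id}_{P(I)}$.

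The heart of the argument is a simplicial homotopy $\widetilde H\colon WP(I)\smsh\Delta[1]_+\to WP(I)$ from $s_I\comp\eta_I$ to $\mathrm{id}$, obtained by ``shrinking all internal edge‑lengths toward $0$''. For an $I$‑tree $T$ with set $E$ of internal edges, so $\Delta[T]=\prod_{e\in E}\Delta[1]$, let $H_T\colon \Delta[T]\cross\Delta[1]\to\Delta[T]$ be the nerve of the order‑preserving map which in coordinate $e$ sends $(x_e,s)$ to $\min(x_e,s)$. Then $H_T(-,1)=\mathrm{id}_{\Delta[T]}$ and $H_T(-,0)$ is the constant map at the vertex $0\in\Delta[T]$. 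The essential compatibility is with the face inclusions: if $U\leq T$ in $\mathsf{T}(I)$, then $H_T\comp(\iota_{U,T}\cross\mathrm{id})=\iota_{U,T}\comp H_U$, since $\iota_{U,T}$ sets the collapsed coordinates to $0$ and $\min(0,-)=0$ on those coordinates. Together with the functoriality of $P(-)$ on $\mathsf{T}(I)^{\mathrm{op}}$ (Lemma \ref{lem:operad-functor}), compatibility with tree isomorphisms, and the defining relations of the coend, this shows that the family $\{(H_T)_+\smsh\mathrm{id}_{P(T)}\}$ induces a well‑defined map $\widetilde H$ out of $WP(I)\smsh\Delta[1]_+$.

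It then remains to identify the two ends of $\widetilde H$. At $s=1$ each $(H_T)_+\smsh\mathrm{id}$ is the identity, so $\widetilde H_1=\mathrm{id}_{WP(I)}$. At $s=0$, on the $T$‑summand $\widetilde H_0$ collapses $\Delta[T]$ to the vertex $0$; the coend relation for $\tau_I\leq T$ then identifies the resulting map $\Delta[T]_+\smsh P(T)\to WP(I)$ with the composite of ``collapse $\Delta[T]$, smashed with the operad composition $P(T)\to P(I)$'' followed by $s_I$ --- that is, with $s_I\comp\eta_T$, by Definition \ref{def:W-map}. Summing over $T$ gives $\widetilde H_0=s_I\comp\eta_I$.

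Thus $\eta_I$ is a simplicial homotopy equivalence, hence --- as all objects of $\spectra$ are fibrant --- a weak equivalence of spectra; since this holds for every $I$, the map $\eta\colon WP\to P$ is a weak equivalence in the projective model structure. The step I expect to demand the most care is the coend bookkeeping of the second paragraph: checking that the edge‑shrinking maps $(H_T)_+\smsh\mathrm{id}_{P(T)}$ are simultaneously compatible with the face inclusions $\Delta[U]\to\Delta[T]$ and with the operad‑composition maps $P(T)\to P(U)$, so that they really do assemble into a map on $WP(I)\smsh\Delta[1]_+$. Everything else is formal.
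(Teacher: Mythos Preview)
Your proposal is correct and follows essentially the same approach as the paper: construct a section of $\eta_I$ via the $\tau_I$-summand and exhibit a simplicial homotopy contracting all edge lengths to $0$, natural in $T$, to conclude $\eta_I$ is a simplicial homotopy equivalence. You supply more detail than the paper (which merely asserts the existence of the homotopy $h_T$ without writing down the $\min$ formula or checking the coend compatibility), but the underlying argument is identical.
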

\begin{proof}
For an $I$-tree $I$, define a map of symmetric sequences $\zeta: P \to WP$ by
\[ \zeta_I: P(I) \arrow{e,t}{\isom} \Delta[\tau_I]_+ \smsh P(\tau_I) \to WP(I). \]
Note that $\zeta$ is not in general a map of operads.

The composite $\eta \zeta: P \to P$ is the identity. There is a simplicial homotopy $h_T$ between the identity map on $\Delta[T]$ and the constant map $\Delta[T] \to \Delta[T]$ to the point $(0,\dots,0)$ of the cube $\Delta[T]$. The homotopies $h_T$ can be chosen naturally in $T$ and determine a homotopy between the identity on $WP$ and the composite $\zeta \eta: WP \to WP$. It follows that $\zeta$ is a simplicial homotopy inverse to $\eta$ in the category of symmetric sequences. In particular, $\eta$ is a weak equivalence in the projective model structure.
\end{proof}

\section{The cobar-bar construction for an operad} \label{sec:bar}

We now consider the bar and cobar constructions for operads and cooperads of spectra. These were defined in \cite{ching:2005a} and are topological versions of the algebraic constructions of Getzler and Jones \cite{getzler/jones:1994}. Up to duality, they correspond to the `dg-dual' construction of Ginzburg and Kapranov \cite[\S3]{ginzburg/kapranov:1994}. The main aim of this section is show that the cobar-bar construction $CBP$, for an operad $P$, is weakly equivalent, \emph{in the category of operads}, to the $W$-construction $WP$. This improves on a result of the author and Greg Arone \cite[20.3]{arone/ching:2009} where we constructed such an equivalence at the level of symmetric sequences.

In this paper we use a slightly different description of $BP$ than that in \cite{ching:2005a}. This version is due to Salvatore \cite{salvatore:1998} and the two constructions yield isomorphic cooperads.

%
%
%
%

\begin{definition}[Bar construction] \label{def:bar}
For an $I$-tree $T$ we define
\[ w(T) := \begin{cases} \Delta[T] \times \Delta[1] & \text{if $|I| \geq 2$}; \\ \quad \quad * & \text{if $|I| = 1$}. \end{cases} \]
Let $w_0(T)$ be the sub-simplicial set consisting of the faces where, either, any edge has value $1$, or, the root edge has value $0$. We define
\[ \bar{w}(T) := w(T)/w_0(T) \]
and think of this as a pointed simplicial set with basepoint given by the quotient point. If $|I| = 1$, then $\bar{w}(T) = S^0$.

If $T \leq T'$, there is an inclusion map
\[ \iota_{T,T'}: \bar{w}(T) \to \bar{w}(T') \]
that identifies $\bar{w}(T)$ with the sub-simplicial set of $\bar{w}(T')$ in which the `new' edges of $T'$ have value $0$. These maps make $\bar{w}(-)$ into a functor $\mathsf{T}(I) \to \sset$. There are natural isomorphisms
\[ w(T \union_i U) \isom w(T) \times w(U) \]
in which the length for the `grafted' edge in $T \cup_i U$ is assigned to the root edge of $U$ and the length for the root edge in $T \cup_i U$ is assigned to the root edge of $T$. These pass to the respective quotients and give us
\[ \nu_{T,i,U}: \bar{w}(T \union_i U) \to \bar{w}(T) \smsh \bar{w}(U). \]
These maps are well-defined, natural in $T$ and $U$, and appropriately associative. Finally, notice that the simplicial set $\bar{w}(T)$ does not depend on the labelling of leaves of $T$ by elements of $I$. In particular, $\bar{w}(T) = \bar{w}(\sigma_*T)$ for any bijection $\sigma: I \arrow{e,t}{\isom} I'$.

Now let $P$ be a reduced operad. We define $BP$ to be the reduced symmetric sequence given by the coends
\[ BP(I) := \bar{w}(T) \smsh_{T \in \mathsf{T}(I)} P(T). \]
A bijection $\sigma: I \arrow{e,t}{\isom} I'$ determines a map $BP(I) \arrow{e,t}{\isom} BP(I')$ by identifying the term
\[ \bar{w}(T) \smsh P(T) \]
in the coend for $BP(I)$ with the term
\[ \bar{w}(\sigma_*T) \smsh P(\sigma_*T) \]
in the coend for $BP(I')$ via the identity map between these equal objects.

We give $BP$ a reduced cooperad structure by combining the maps $\nu_{T,i,U}$ above with the isomorphisms
\[ P(T \union_i U) \arrow{e,t}{\isom} P(T) \smsh P(U). \]
Here we need the maps $d$ of \ref{rem:axiom}.
\end{definition}

\begin{remark} \label{rem:bar-other}
We often think informally of a `point' in $BP(I)$ as an $I$-tree $T$ with lengths between $0$ and $1$ assigned to its internal edges, and to its root edge, together with a decoration for each internal vertex $t$ from the object $P(I_t)$. Trees in which the root edge has length $0$, or any edge has length $1$, are identified with the basepoint in $BP(I)$. A tree in which an internal edge has length $0$ is identified with the tree obtained by collapsing that edge and using the composition in the operad $P$ to decorate the new vertex.
\end{remark}

We now define the cobar construction for a reduced cooperad. To do this we employ the `reverses' of the simplicial sets $\bar{w}(T)$.

\begin{definition}[Reverse of a simplicial set] \label{def:reverse}
The simplicial indexing category $\Delta$ has an automorphism $R$ that sends a totally ordered finite set to its `opposite', that is, the same set with the opposite order. For a simplicial set $X$, the \emph{reverse} of $X$, denoted $X^{\mathsf{rev}}$ is the simplicial set $X \circ R$.
\end{definition}

\begin{definition}[Cobar construction for cooperads] \label{def:cobar}
Given a reduced cooperad $Q$, we define its \emph{cobar construction} $CQ$ to be the symmetric sequence
\[ CQ(I) := \Map_{T \in \mathsf{T}(I)}(\bar{w}(T)^{\mathsf{rev}},Q(T)). \]
This mapping spectrum is an `end' calculated over the category $\mathsf{T}(I)$. The notation $\Map(-,-)$ refers to the cotensoring of $\spectra$ over pointed simplicial sets. We make $CQ$ into a reduced operad by combining the reverses of the maps $\nu_{T,i,U}$ with the isomorphisms
\[ Q(T) \smsh Q(U) \arrow{e,t}{\isom} Q(T \union_i U). \]
Here we also require maps of the form
\[ d^*: \Map(K,X) \smsh \Map(L,Y) \to \Map(K \smsh L, X \smsh Y) \]
for simplicial sets $K,L$ and spectra $X,Y$. These can be constructed from adjoints of the maps labelled $d$ in Remark \ref{rem:axiom}.
\end{definition}

\begin{remark} \label{rem:reverse}
Since the tensoring of spectra over simplicial sets factors via geometric realization, it makes no actual difference to $CQ$ that we use $\bar{w}(T)^{\mathsf{rev}}$ instead of $\bar{w}(T)$. The simplicial sets $X$ and $X^{\mathsf{rev}}$ have homeomorphic realizations. However, we need to use the reversal to relate the cobar and bar constructions to the $W$-construction at the simplicial level.
\end{remark}

\begin{remark} \label{rem:cobar}
Informally, we think of a point in $CQ(I)$ as an assignment of a label $x \in Q(T)$ to each tree $T \in \mathsf{T}(I)$ whose internal and root edges have lengths between $0$ and $1$. If any edge in $T$ has length $1$, or if the root edge has length $0$, we assign the basepoint in $Q(T)$. If an internal edge $e$ in $T$ has length $0$, the assigned label in $Q(T)$ should be the image under the map $Q(T/e) \to Q(T)$ of the label assigned to the corresponding tree based $T/e$ with edge lengths the same as in $T$.
\end{remark}

\begin{definition}[Cobar-bar construction] \label{def:cobar-bar}
Let $P$ be a reduced operad of spectra. The \emph{cobar-bar construction on $P$} is the reduced operad $CBP$ formed by applying the cobar construction of Definition \ref{def:cobar} to the cooperad $BP$.
\end{definition}

We are now in a position to state the first main result of this paper: this is that, if $P$ is suitably cofibrant, $CBP$ is weakly equivalent to $P$ in the category of reduced operads. We prove this result by constructing a natural weak equivalence of operads
\[ \theta: WP \weq CBP. \]
This result should be compared to a theorem of Berger-Moerdijk \cite[8.5.4]{berger/moerdijk:2006} that, in the case of operads of chain complexes, $WP$ and $CBP$ are isomorphic. We describe the connection explicitly in Remark \ref{rem:Berger-Moerdijk} below.

\begin{informal-definition} \label{def:W-CB-informal}
Informally, we can think of the map $\theta$ in the following way. Start with a point $x \in WP(I)$, that is a tree $T$ whose internal edges have lengths between $0$ and $1$, together with a point $p \in P(T)$. To define the point $\theta(x) \in CBP(I)$, we have to assign, for each $I$-tree $U$ whose internal and root edges have lengths between $0$ and $1$, a point in $\theta(x)_U \in BP(U)$. Such a point in $BP(U)$ is, informally, a sequence of points, $\theta(x)_u \in BP(I_u)$ for each vertex $u$ of $U$, where $I_u$ is the set of incoming edges of $u$.

Firstly, if $U \nleq T$, we choose $\theta(x)_U$ to be the basepoint in $BP(U)$. If instead $U \leq T$, then each vertex $u$ of $U$ corresponds to a fragment $T_u$ of the tree $U$ in such a way that $T$ is obtained by grafting all the trees $T_u$ together. The required point $\theta(x)_u \in BP(I_u)$ is based on the tree $T_u$. Such a point requires a label from $P(T_u)$. This label is obtained from the original point $p \in P(T)$ which is itself equivalent to a sequence of points, one in $P(T_u)$ for each $u$. All that remains now is to specify the lengths of the internal and root edges of the trees $T_u$ that underlie the points $\theta(x)_u$. These depend on the corresponding lengths in the trees $T$ and $U$.

If $e$ is an internal edge of $T_u$, then it corresponds to a unique internal edge of the original tree $T$ and we give it the same length. If $e$ is the root edge of $T_u$, then it corresponds both to an edge in $T$ (say with length $t$), \emph{and} to an edge in $U$ (say with length $s$), namely the outgoing edge of the vertex $u$. In this case, we give $e$ length $\max(t-s,0)$. Finally, if $e$ is the root edge of $T_u$, where $u$ is the \emph{root vertex} of $U$, then $e$ corresponds to the root edge of $T$ which does not have a length. In this case, we act as though that root edge had length $1$ and give $e$ length $1-s$, where $s$ is the length of the root edge of $U$. The following picture illustrates an example of the map $\theta$ which hopefully clarifies the above description.

\begin{center}
  \begin{picture}(0,0)%
\includegraphics{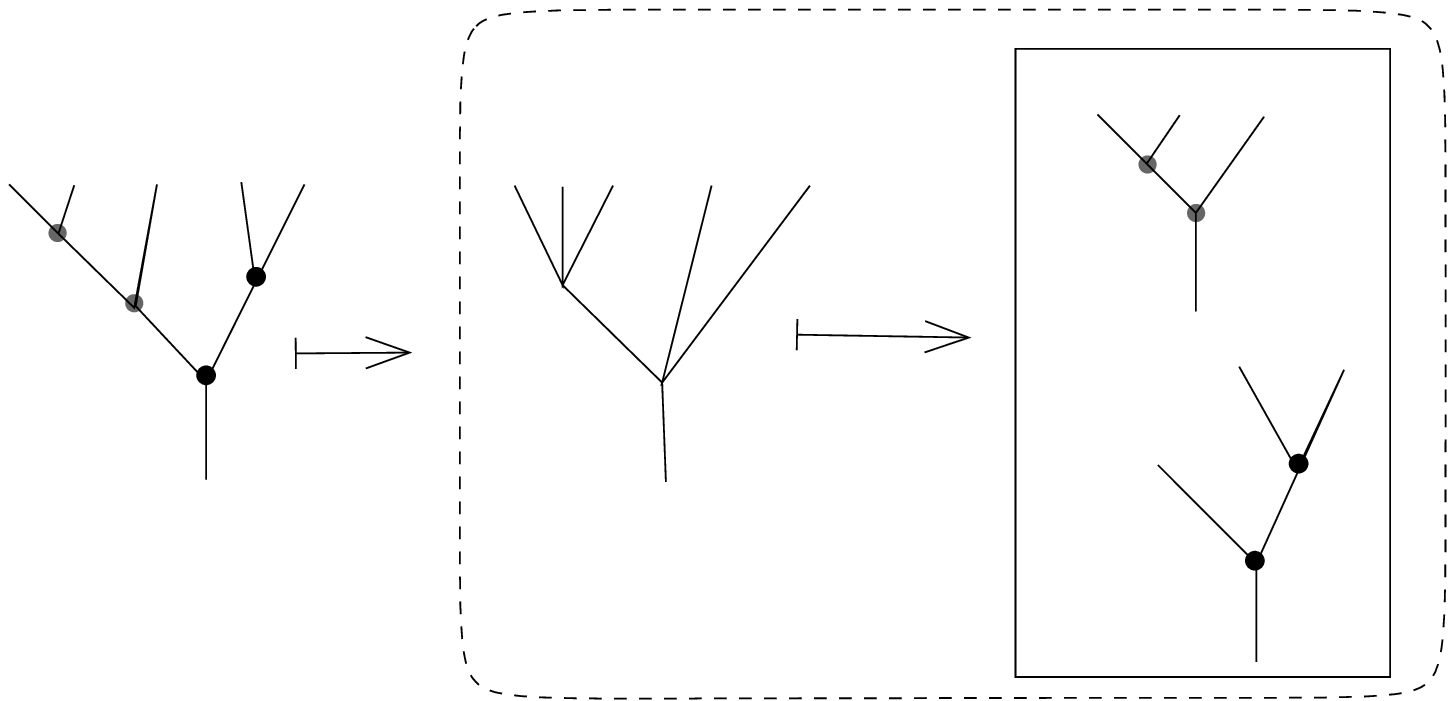}%
\end{picture}%
%
%
\setlength{\unitlength}{3947sp}%
\begingroup\makeatletter\ifx\SetFigFont\undefined%
\gdef\SetFigFont#1#2#3#4#5{%
  \reset@font\fontsize{#1}{#2pt}%
  \fontfamily{#3}\fontseries{#4}\fontshape{#5}%
  \selectfont}%
\fi\endgroup%
\begin{picture}(7023,3331)(1258,-2960)
\put(3733,-385){\makebox(0,0)[lb]{\smash{{\SetFigFont{11}{13.2}{\rmdefault}{\mddefault}{\updefault}{\color[rgb]{0,0,0}$1$}%
}}}}
\put(4033,-385){\makebox(0,0)[lb]{\smash{{\SetFigFont{11}{13.2}{\rmdefault}{\mddefault}{\updefault}{\color[rgb]{0,0,0}$2$}%
}}}}
\put(4333,-385){\makebox(0,0)[lb]{\smash{{\SetFigFont{11}{13.2}{\rmdefault}{\mddefault}{\updefault}{\color[rgb]{0,0,0}$3$}%
}}}}
\put(4760,-377){\makebox(0,0)[lb]{\smash{{\SetFigFont{11}{13.2}{\rmdefault}{\mddefault}{\updefault}{\color[rgb]{0,0,0}$4$}%
}}}}
\put(5248,-385){\makebox(0,0)[lb]{\smash{{\SetFigFont{11}{13.2}{\rmdefault}{\mddefault}{\updefault}{\color[rgb]{0,0,0}$5$}%
}}}}
\put(1273,-394){\makebox(0,0)[lb]{\smash{{\SetFigFont{11}{13.2}{\rmdefault}{\mddefault}{\updefault}{\color[rgb]{0,0,0}$1$}%
}}}}
\put(1671,-394){\makebox(0,0)[lb]{\smash{{\SetFigFont{11}{13.2}{\rmdefault}{\mddefault}{\updefault}{\color[rgb]{0,0,0}$2$}%
}}}}
\put(2083,-394){\makebox(0,0)[lb]{\smash{{\SetFigFont{11}{13.2}{\rmdefault}{\mddefault}{\updefault}{\color[rgb]{0,0,0}$3$}%
}}}}
\put(2451,-386){\makebox(0,0)[lb]{\smash{{\SetFigFont{11}{13.2}{\rmdefault}{\mddefault}{\updefault}{\color[rgb]{0,0,0}$4$}%
}}}}
\put(2818,-386){\makebox(0,0)[lb]{\smash{{\SetFigFont{11}{13.2}{\rmdefault}{\mddefault}{\updefault}{\color[rgb]{0,0,0}$5$}%
}}}}
\put(6474,-73){\makebox(0,0)[lb]{\smash{{\SetFigFont{11}{13.2}{\rmdefault}{\mddefault}{\updefault}{\color[rgb]{0,0,0}$1$}%
}}}}
\put(6954,-58){\makebox(0,0)[lb]{\smash{{\SetFigFont{11}{13.2}{\rmdefault}{\mddefault}{\updefault}{\color[rgb]{0,0,0}$2$}%
}}}}
\put(7382,-58){\makebox(0,0)[lb]{\smash{{\SetFigFont{11}{13.2}{\rmdefault}{\mddefault}{\updefault}{\color[rgb]{0,0,0}$3$}%
}}}}
\put(7238,-1269){\makebox(0,0)[lb]{\smash{{\SetFigFont{11}{13.2}{\rmdefault}{\mddefault}{\updefault}{\color[rgb]{0,0,0}$4$}%
}}}}
\put(7763,-1261){\makebox(0,0)[lb]{\smash{{\SetFigFont{11}{13.2}{\rmdefault}{\mddefault}{\updefault}{\color[rgb]{0,0,0}$5$}%
}}}}
\put(6817,-1723){\makebox(0,0)[lb]{\smash{{\SetFigFont{11}{13.2}{\rmdefault}{\mddefault}{\updefault}{\color[rgb]{0,0,0}$e$}%
}}}}
\put(1614,-997){\makebox(0,0)[lb]{\smash{{\SetFigFont{9}{10.8}{\rmdefault}{\mddefault}{\updefault}{\color[rgb]{0,0,0}$t$}%
}}}}
\put(2011,-1364){\makebox(0,0)[lb]{\smash{{\SetFigFont{9}{10.8}{\rmdefault}{\mddefault}{\updefault}{\color[rgb]{0,0,0}$t'$}%
}}}}
\put(2491,-1224){\makebox(0,0)[lb]{\smash{{\SetFigFont{9}{10.8}{\rmdefault}{\mddefault}{\updefault}{\color[rgb]{0,0,0}$t''$}%
}}}}
\put(4139,-1313){\makebox(0,0)[lb]{\smash{{\SetFigFont{9}{10.8}{\rmdefault}{\mddefault}{\updefault}{\color[rgb]{0,0,0}$s$}%
}}}}
\put(4559,-1688){\makebox(0,0)[lb]{\smash{{\SetFigFont{9}{10.8}{\rmdefault}{\mddefault}{\updefault}{\color[rgb]{0,0,0}$s'$}%
}}}}
\put(7524,-2139){\makebox(0,0)[lb]{\smash{{\SetFigFont{9}{10.8}{\rmdefault}{\mddefault}{\updefault}{\color[rgb]{0,0,0}$t''$}%
}}}}
\put(7424,-2563){\makebox(0,0)[lb]{\smash{{\SetFigFont{9}{10.8}{\rmdefault}{\mddefault}{\updefault}{\color[rgb]{0,0,0}$1-s'$}%
}}}}
\put(7130,-898){\makebox(0,0)[lb]{\smash{{\SetFigFont{9}{10.8}{\rmdefault}{\mddefault}{\updefault}{\color[rgb]{0,0,0}$t'-s$}%
}}}}
\put(6845,-621){\makebox(0,0)[lb]{\smash{{\SetFigFont{9}{10.8}{\rmdefault}{\mddefault}{\updefault}{\color[rgb]{0,0,0}$t$}%
}}}}
\put(2132,-2270){\makebox(0,0)[lb]{\smash{{\SetFigFont{11}{13.2}{\rmdefault}{\mddefault}{\updefault}{\color[rgb]{0,0,0}$WP(I)$}%
}}}}
\put(4379,-2268){\makebox(0,0)[lb]{\smash{{\SetFigFont{11}{13.2}{\rmdefault}{\mddefault}{\updefault}{\color[rgb]{0,0,0}$\bar{w}(U)$}%
}}}}
\put(6346,-2664){\makebox(0,0)[lb]{\smash{{\SetFigFont{11}{13.2}{\rmdefault}{\mddefault}{\updefault}{\color[rgb]{0,0,0}$BP(U)$}%
}}}}
\end{picture}%

\end{center}

It remains to check that this definition really does give a well-defined operad map $\theta: WP \to CBP$. Instead of doing this now, we give a more formal definition of the map $\theta$ and verify that this is well-defined. Our definition relies on certain maps of simplicial sets which we now define.
\end{informal-definition}

\begin{definition}[The basic maps] \label{def:interval-map}
Define a map of simplicial sets
\[ h: \Delta[1] \times \Delta[1]^{\mathsf{rev}} \to \Delta[1] \]
whose realization is the map $(t,s) \mapsto \max(t-s,0)$ by the following picture:

\begin{center}
  \begin{picture}(0,0)%
\includegraphics{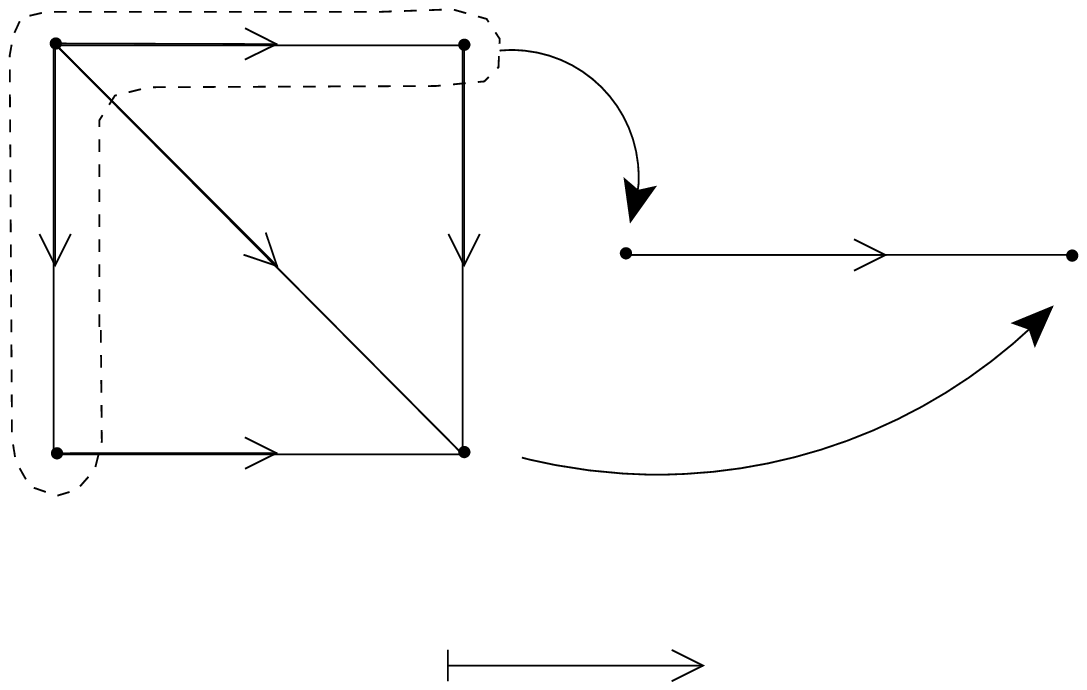}%
\end{picture}%
%
%
\setlength{\unitlength}{3947sp}%
\begingroup\makeatletter\ifx\SetFigFont\undefined%
\gdef\SetFigFont#1#2#3#4#5{%
  \reset@font\fontsize{#1}{#2pt}%
  \fontfamily{#3}\fontseries{#4}\fontshape{#5}%
  \selectfont}%
\fi\endgroup%
\begin{picture}(5276,3540)(1164,-3731)
\put(2019,-3316){\makebox(0,0)[lb]{\smash{{\SetFigFont{11}{13.2}{\rmdefault}{\mddefault}{\updefault}{\color[rgb]{0,0,0}$\Delta[1] \times \Delta[1]^{\mathsf{rev}}$}%
}}}}
\put(5281,-3339){\makebox(0,0)[lb]{\smash{{\SetFigFont{11}{13.2}{\rmdefault}{\mddefault}{\updefault}{\color[rgb]{0,0,0}$\Delta[1]$}%
}}}}
\put(1261,-354){\makebox(0,0)[lb]{\smash{{\SetFigFont{11}{13.2}{\rmdefault}{\mddefault}{\updefault}{\color[rgb]{0,0,0}$(0,1)$}%
}}}}
\put(3354,-354){\makebox(0,0)[lb]{\smash{{\SetFigFont{11}{13.2}{\rmdefault}{\mddefault}{\updefault}{\color[rgb]{0,0,0}$(1,1)$}%
}}}}
\put(2326,-3661){\makebox(0,0)[lb]{\smash{{\SetFigFont{11}{13.2}{\rmdefault}{\mddefault}{\updefault}{\color[rgb]{0,0,0}$(t,s)$}%
}}}}
\put(5004,-3646){\makebox(0,0)[lb]{\smash{{\SetFigFont{11}{13.2}{\rmdefault}{\mddefault}{\updefault}{\color[rgb]{0,0,0}$\max(t-s,0)$}%
}}}}
\put(1179,-2971){\makebox(0,0)[lb]{\smash{{\SetFigFont{11}{13.2}{\rmdefault}{\mddefault}{\updefault}{\color[rgb]{0,0,0}$(0,0)$}%
}}}}
\put(3325,-2851){\makebox(0,0)[lb]{\smash{{\SetFigFont{11}{13.2}{\rmdefault}{\mddefault}{\updefault}{\color[rgb]{0,0,0}$(1,0)$}%
}}}}
\put(4239,-1899){\makebox(0,0)[lb]{\smash{{\SetFigFont{11}{13.2}{\rmdefault}{\mddefault}{\updefault}{\color[rgb]{0,0,0}$0$}%
}}}}
\put(6385,-1896){\makebox(0,0)[lb]{\smash{{\SetFigFont{11}{13.2}{\rmdefault}{\mddefault}{\updefault}{\color[rgb]{0,0,0}$1$}%
}}}}
\end{picture}%

\end{center}

Also let $r$ denote the `reflection'
\[ r: \Delta[1]^{\mathsf{rev}} \to \Delta[1] \]
whose realization is the map
\[ s \mapsto 1-s. \]
It is to allow for the definition of the maps $r$ and $h$ that we have to be careful with the orientations of our intervals, using reversals in the definition of the cobar construction.
\end{definition}

\begin{definition}[Formal definition of $\theta$] \label{def:W-CB}
Fix a nonempty finite set $I$. Suppose $T,U \in \mathsf{T}(I)$ with $U \leq T$ and recall that we write $T_u$ for the fragment of $T$ that collapses to the vertex $u$ of $U$. The map $\theta$ is, at its heart, based on maps of simplicial sets of the form
\[ \theta_{T,U}: \Delta[T]_+ \smsh \bar{w}(U)^{\mathsf{rev}} \to \Smsh_{u \in U} \bar{w}(T_u). \]
This is a quotient of a map of cubes
\[ \hat{\theta}_{T,U}: \Delta[T] \times w(U)^{\mathsf{rev}} \to \prod_{u \in U} w(T_u). \]
The target of $\hat{\theta}_{T,U}$ is a product of copies of $\Delta[1]$ indexed by all the internal and root edges of the trees $T_u$ for all internal vertices $u \in U$. We define $\hat{\theta}_{T,U}$ component by component:
\begin{itemize}
 \item an internal edge $e$ of $T_u$ corresponds to a unique internal edge $e$ of $T$ and we choose the component of $\hat{\theta}_{T,U}$ corresponding to $e$ to be the projection onto the corresponding copy of $\Delta[1]$ in $\Delta[T]$;
 \item if $u$ is not the root vertex of $U$, then the root edge $e$ of $T_u$ corresponds to an internal edge of $T$ \emph{and} an internal edge of $U$ (the outgoing edge from $u$). In this case, we obtain the relevant component of $\hat{\theta}_{T,U}$ by projecting onto the copy of $\Delta[1] \times \Delta[1]^{\mathsf{rev}}$ in $\Delta[T] \times w(U)^{\mathsf{rev}}$ determined by these edges, and by applying the map $h$ of Definition \ref{def:interval-map};
 \item if $u$ is the root vertex of $U$, then the root edge of $T_u$ corresponds to the root edge of $U$, and we obtain the relevant component of $\hat{\theta}_{T,U}$ by projecting onto the copy of $\Delta[1]^{\mathsf{rev}}$ in $w(U)^{\mathsf{rev}}$ corresponding to this edge, and then applying the reflection map $r$ of Definition \ref{def:interval-map}.
\end{itemize}
Now let $P$ be a reduced operad. Using $\theta_{T,U}$ and the isomorphism $P(T) \arrow{e,t}{\isom} \Smsh_{U \in U} P(T_u)$ we obtain a map
\[ \Delta[T]_+ \smsh P(T) \to \Map\left(\bar{w}(U)^{\mathsf{rev}},\Smsh_{u \in U} \bar{w}(T_u) \smsh P(T_u)\right). \]
These are natural in $T,U \in \mathsf{T}(I)$ (see the second step in the proof of \ref{prop:W-CB} below) and so we obtain a single map
\[ \Delta[T]_+ \smsh_{T} P(T) \to \Map_{U}\left(\bar{w}(U)^{\mathsf{rev}},\Smsh_{u \in U} \bar{w}(T_u) \smsh_{T_u \in \mathsf{T}(I_u)} P(T_u)\right) \]
which is the required
\[ \theta(I): WP(I) \to CBP(I). \]
These respect the relabelling on trees so form a map of symmetric sequences
\[ \theta: WP \to CBP. \]
\end{definition}

\begin{prop} \label{prop:W-CB}
The construction of Definition \ref{def:W-CB} produces a well-defined morphism of operads $\theta: WP \to CBP$, natural in $P$.
\end{prop}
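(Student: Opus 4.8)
The plan is to verify three things in turn: (1) the cube map $\hat\theta_{T,U}$ is compatible with the quotient defining $\bar w(T_u)$ from $w(T_u)$, so that it really does descend to $\theta_{T,U}$; (2) the collection $\{\theta_{T,U}\}$ is natural in both variables $T$ and $U$ of $\mathsf{T}(I)$, so that after smashing with $P(T)$, taking adjoints, and using the isomorphism $P(T)\isom \Smsh_{u\in U}P(T_u)$ we obtain a single well-defined map $\theta(I):WP(I)\to CBP(I)$; and (3) the resulting map of symmetric sequences commutes with the operad structure maps on $WP$ and $CBP$, and is natural in $P$.

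For (1), I would check that each of the three types of component of $\hat\theta_{T,U}$ sends the subcomplex of $\Delta[T]\times w(U)^{\mathsf{rev}}$ that must be collapsed into the subcomplex $w_0(T_u)$ that is collapsed in $\bar w(T_u)$. Concretely: if some internal edge of $T$ has length $1$, then the corresponding internal edge of $T_u$ has length $1$ (first bullet), so that $T_u$-factor lands in $w_0(T_u)$; if an internal edge of $U$ has length $1$, then for the vertex $u$ below it the root edge of $T_u$ gets length $h(t,1)=\max(t-1,0)=0$, which is the `root edge has value $0$' face, again in $w_0$; and if the root edge of $U$ has length $1$, the root edge of $T_{\mathrm{root}}$ gets length $r(1)=0$, again collapsed. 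Conversely, the basepoint of $\bar w(U)^{\mathsf{rev}}$ (coming from faces of $w(U)$ where some edge has length $1$ or the root edge has length $0$) is handled by the same computations: if the root edge of $U$ has length $0$ then $r(0)=1$ puts $T_{\mathrm{root}}$ in $w_0$. One must also observe that the $|I_u|=1$ case is consistent, since then $\bar w(T_u)=S^0$ and the target factor is trivially handled. This is a finite check over the (few) types of edges and degenerate faces.

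For (2), naturality in $U$ reduces, as usual, to the generating morphisms $U/e\to U$ of $\mathsf{T}(I)$ together with isomorphisms; naturality in $T$ reduces to $T/e'\to T$ and isomorphisms. Collapsing an internal edge $e'$ of $T$ either lies within a single fragment $T_u$ (then it is handled by the first bullet and compatibility with the functoriality of $\bar w$ on $\mathsf{T}(I_u)$) or is the root edge of some $T_u$ being absorbed — here one uses that the operad composition defining $P(-)$ on $\mathsf{T}(I)$ and the identifications $w(T\cup_i U)\isom w(T)\times w(U)$ match the $\max(t-s,0)$ prescription, because setting that edge of $T$ to length $0$ gives $h(0,s)=0$ consistently. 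For $U/e\to U$, collapsing an internal edge $e$ of $U$ merges two vertices $u,u'$ with $T_u$ grafted onto $T_{u'}$ along that edge; since that edge of $U$ then has no length (it becomes internal to the merged fragment), the prescription correctly reverts to giving the corresponding edge of $T$ its own length via the first bullet, and the $h$-component is replaced by a projection. This matches the functoriality $\bar w(T_{u'})\smsh\bar w(T_u)\isom \bar w(T_{u'}\cup \dots)$ precisely because of how lengths are assigned in the grafting isomorphism for $\bar w$. After these checks the coend/end formalism produces $\theta(I)$, and compatibility with relabelling is immediate since none of $\Delta[T]$, $\bar w(T)$, $P(T)$, or the maps $h,r$ depend on leaf labels, only on the combinatorics of the tree.

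For (3), the operad structure on $WP$ is built from the grafting maps $\mu_{T,i,V}:\Delta[T]_+\smsh\Delta[V]_+\to\Delta[T\cup_i V]_+$ (assigning value $1$ to the new edge), and the operad structure on $CBP$ is built from the reverses of the cooperad decomposition maps $\nu$ on $BP$, using the maps $d$ and $d^*$ of Remarks~\ref{rem:axiom} and Definition~\ref{def:cobar}. The key identity is that, when we graft $T$ and $V$ at leaf $i$, the new edge gets length $1$, and in the $\hat\theta$-prescription the root edge of the relevant fragment of $T$ gets length $h(1,s)=\max(1-s,0)=1-s=r(s)$ — exactly the formula used in the ``root vertex'' bullet. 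This is the combinatorial heart of why the two operad structures are compatible, and I expect \textbf{this coherence check to be the main obstacle}: one must track, through the isomorphisms $d$, $d^*$, the grafting identifications for $\Delta[-]$, $w(-)$, and $P(-)$, and the partition $T\mapsto\{T_u\}$, that the square
\[ \begin{CD} WP(I)\smsh WP(J) @>>> WP(I\cup_i J) \\ @VVV @VVV \\ CBP(I)\smsh CBP(J) @>>> CBP(I\cup_i J) \end{CD} \]
commutes. Naturality in $P$ is then formal: every step uses only operad structure maps and the canonical isomorphisms $P(T)\isom\Smsh_u P(T_u)$, all of which are natural in $P$.
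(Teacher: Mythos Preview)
Your proposal is correct and follows essentially the same three-step approach as the paper: (1) check that $\hat\theta_{T,U}$ descends to the quotient, (2) check naturality in $T$ and $U$, and (3) check compatibility with grafting to get an operad map. One small remark: in step (1) your check ``if some internal edge of $T$ has length $1$'' is superfluous, since $\Delta[T]_+$ has only a disjoint basepoint and no face is collapsed there; the only collapsed subcomplex on the source side comes from $w_0(U)$, and you do verify those cases correctly. On the positive side, in step (3) you make explicit the key identity $h(1,s)=1-s=r(s)$ that makes the grafted $\hat\theta_{T_1\cup_i T_2,\,U_1\cup_i U_2}$ agree with the product $\hat\theta_{T_1,U_1}\times\hat\theta_{T_2,U_2}$; the paper simply asserts this is ``easy to check,'' so your version is clearer on this point.
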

\begin{proof}
The first step is to check that $\hat{\theta}_{T,U}$ does pass to the quotient and defines $\theta_{T,U}$ as claimed. To explain this, it is simpler to use `topological' language by referring to `points' in the cubes $\Delta[T]$ and $w(U)^{\mathsf{rev}}$ as though we had taken geometric realization. It is also useful to think of these points as determining `lengths' for the edges of the trees $T$ and $U$.

If the outgoing edge of the vertex $u$ in $U$ has length $1$, then the corresponding root edge in $T_u$ is given length $0$, so determines the basepoint in $\bar{w}(T_u)$. If the root edge of $U$ has length $0$, then the root edge of $T_r$ (where $r$ is the root vertex of $U$) has length $1$, so again determines the basepoint. Thus the map $\theta_{T,U}$ is well-defined.

The second step is to check that the maps $\theta_{T,U}$ are natural in $T$ and $U$. This amounts to noticing three things:
\begin{itemize}
  \item if one of the edges in $T$ not in $U$ has length $0$, then the corresponding edge in the relevant $T_u$ also has length $0$;
  \item if one of the edges in $T$ that is in $U$ has length $0$, then the corresponding root edge in $T_u$ has length $0$ and so determines the basepoint;
  \item if the edge $e$ in $U$ has length $0$, then the corresponding root edge in $T_u$ has the same length as its length in $T$. This is the same as the length it would have had as an internal edge in the larger fragment $T_u$ that we would have obtained if $U$ were replaced with the smaller tree $U/e$.
\end{itemize}

The third and final step is to consider what happens if $T = T_1 \cup_i T_2$ (with the `new' internal edge of length $1$) and $U = U_1 \cup_i U_2$ with $U_j \leq T_j$ for $j = 1,2$. It is easy to check that the map $\hat{\theta}_{T,U}$ is then essentially the product of the maps $\hat{\theta}_{T_1,U_1}$ and $\hat{\theta}_{T_2,U_2}$. This ensures that $\theta$ is a map of operads as required.
\end{proof}

\begin{remark} \label{rem:Berger-Moerdijk}
We describe the connection between our map $\theta: WP \to CBP$ and Theorem 8.5.4 of \cite{berger/moerdijk:2006}. The latter states that there is an isomorphism of operads
\[ W(H,P) \isom CBP \]
where $P$ is a reduced operad of chain complexes of $R$-modules, $H$ is the following `interval' in the category of chain complexes
\[ 0 \leftarrow R\{\gamma_0\} \oplus R\{\gamma_1\} \leftarrow R\{\gamma\} \leftarrow 0 \leftarrow \dots, \]
and $W(H,P)$ is the $W$-construction based on the interval $H$, as defined in \cite[\S4]{berger/moerdijk:2006}. Here $C$ and $B$ denote, respectively, the cobar and bar constructions for cooperads and operads of chain complexes, as described by Getzler and Jones \cite{getzler/jones:1994}.

We leave the reader to check that, using the interval $H$ in place of $\Delta[1]$, the cobar and bar constructions of \S2 above yield precisely the algebraic constructions of Getzler and Jones. (This was essentially done in \cite[9.4]{ching:2005a} though from a slightly different perspective.) Similarly, using $H$ for the $W$-construction of \S1 yields precisely the Berger-Moerdijk version. The construction of the map $\theta$ in Definition \ref{def:W-CB} then carries over to the algebraic setting to determine a map
\[ \theta: W(H,P) \to CBP. \]
This construction involves maps of chain complexes that correspond to the maps $h$ and $r$ of Definition \ref{def:interval-map}. Note that in the algebraic case $H$ is its own `reverse', that is, there is an isomorphism of chain complexes $r: H \to H$ that sends $\gamma_0$ to $\gamma_1$ and vice versa.

Finally, we can see that the map $\theta$ is an isomorphism in this case by comparing its construction to the isomorphism described by Berger and Moerdijk in \cite[8.5.4]{berger/moerdijk:2006}.
\end{remark}

Returning to the case of spectra, our next goal is to show that $\theta$ is a weak equivalence of operads in the projective model structure (when $P$ is suitably cofibrant). We first describe the cofibrancy condition required.

\begin{definition}[Termwise-cofibrant operads and cooperads]
Let $A$ be a reduced symmetric sequence, operad or cooperad. We say that $A$ is \emph{termwise-cofibrant} if, for each nonempty finite set $I$ with $|I| \geq 2$, the object $A(I)$ is cofibrant in the standard model structure on $\spectra$. Note that the object $A(1)$ is isomorphic to the sphere spectrum $S$, hence not cofibrant in the EKMM model structure.

In \cite[\S9]{arone/ching:2009} we proved, with Greg Arone, that the category of reduced operads has termwise-cofibrant replacements, given by actual cofibrant replacements in the projective model structure. Thus, given an operad $P$, there is a natural weak equivalence of operads
\[ \tilde{P} \weq P \]
where $\tilde{P}$ is termwise-cofibrant. For the rest of this paper, we use this notation (that is, adding a tilde) to denote such a termwise-cofibrant replacement. For example, $\tilde{C}Q$ denotes a termwise-cofibrant replacement of the operad $CQ$.
\end{definition}

\begin{prop} \label{prop:bar-cobar-homotopy}
Let $f: P \weq P'$ be a weak equivalence of termwise-cofibrant operads. Then the induced map
\[ Bf: BP \to BP' \]
is a weak equivalence of termwise-cofibrant cooperads. Dually, let $g:Q \weq Q'$ be a weak equivalence of termwise-cofibrant cooperads. Then the induced map
\[ Cg: CQ \to CQ' \]
is a weak equivalence of operads.
\end{prop}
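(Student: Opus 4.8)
The plan is to prove the two halves of the statement together, but in a specific order: first establish the claim about the bar construction, and then deduce the claim about the cobar construction by a duality/formal argument (though here ``duality'' is informal since we are in spectra, so really I mean by mimicking the same argument). For the bar construction, the key point is that $BP(I)$ is built as a coend
\[ BP(I) = \bar w(T) \smsh_{T \in \mathsf{T}(I)} P(T), \]
and I want to identify this coend with a homotopy colimit so that termwise weak equivalences in $P$ propagate to $BP$. The central observation is that the functor $T \mapsto \bar{w}(T)$ is \emph{projectively cofibrant} (or Reedy/cofibrant enough) as a functor $\mathsf{T}(I) \to \sset$: the maps $\iota_{T,T'}$ are inclusions of faces of cubes, so $\bar{w}(-)$ is a cell complex built from representable functors along monomorphisms. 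This means the coend with $\bar{w}(-)$ computes a homotopy colimit, and hence sends objectwise weak equivalences between objectwise-cofibrant functors $T \mapsto P(T)$ to weak equivalences. The hypothesis that $P$ and $P'$ are termwise-cofibrant is exactly what makes each $P(T) = \Smsh_{t \in T} P(I_t)$ cofibrant (smash products of cofibrant spectra are cofibrant, and the $P(1) = S$ factors are the unit, so they cause no trouble in a smash product). So $Bf$ induces a weak equivalence termwise, and one checks that $BP$, $BP'$ are again termwise-cofibrant because each $BP(I)$ is a homotopy colimit of cofibrant spectra, hence cofibrant.

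\textbf{The cobar half.} For $Cg: CQ \to CQ'$, the construction is dual: $CQ(I) = \Map_{T \in \mathsf{T}(I)}(\bar{w}(T)^{\mathsf{rev}}, Q(T))$ is an \emph{end}, i.e. a homotopy limit weighted by the projectively cofibrant functor $\bar{w}(-)^{\mathsf{rev}}$. Here the crucial input is the footnote remark from \S\ref{sec:$W$-construction}: in the EKMM category $\spectra$ \emph{every} object is fibrant. Therefore $Q(T) = \Smsh_{t \in T} Q(I_t)$ is automatically fibrant, and the weighted limit $\Map_{T}(\bar w(T)^{\mathsf{rev}}, -)$ computes a homotopy limit of a diagram of fibrant objects; such homotopy limits preserve objectwise weak equivalences without any cofibrancy hypothesis on $Q$, $Q'$. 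Hence $Cg$ is a termwise weak equivalence, and a termwise weak equivalence of operads is by definition a weak equivalence in the projective model structure. Note this is why the cobar statement needs no termwise-cofibrancy on the \emph{target}: it is genuinely asymmetric with the bar statement, and it is precisely the place where the EKMM ``all objects fibrant'' hypothesis earns its keep.

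\textbf{Main obstacle.} The routine parts are the manipulations with smash products of cofibrant spectra and the observation that $\mathsf{T}(I)$-indexed (co)ends against $\bar w(-)$ are homotopy (co)limits. The real work is verifying the cofibrancy of the weight functor $\bar w(-): \mathsf{T}(I) \to \sset$ (and its reverse) in the relevant projective model structure on diagrams, i.e. exhibiting $\bar w(-)$ as a retract of a cell complex built from $\mathsf{T}(I)(T,-)_+ \smsh (\text{sphere})$ along generating cofibrations. This amounts to an induction over the number of internal edges of trees, using that $\bar w(T)/\bigl(\bigcup_{T' < T}\bar w(T')\bigr)$ is a cube modulo some faces — a $\Sigma$-free or at least cofibration-friendly attaching map. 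I expect that to be the one genuinely technical lemma; everything else then follows from the general machinery of homotopy (co)limits over small categories together with the facts (cofibrant $\smsh$ cofibrant is cofibrant; everything is fibrant) specific to $\spectra$. One should also double-check that the coend and end respect the symmetric-group actions so that the induced maps really are maps of symmetric sequences, but this is formal since $\bar w(-)$ is independent of the leaf-labelling.
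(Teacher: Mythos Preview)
Your overall approach is correct and essentially matches the paper's: the paper shows that $\bar{w}(-)$ is Reedy cofibrant as a $\mathsf{T}(I)$-diagram by checking that the latching maps $\colim_{T'<T}\bar{w}(T') \to \bar{w}(T)$ are cofibrations of simplicial sets (exactly the ``cube modulo some faces'' picture you describe), and then invokes Hirschhorn's 18.4.13 on the coend/end. Since $\mathsf{T}(I)$ is essentially a direct category (all non-isomorphisms raise degree), Reedy and projective cofibrancy coincide for the weight, so your phrasing in terms of projective cofibrancy is the same argument.

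There is, however, a genuine slip in your cobar half. You write that the homotopy limit ``preserve[s] objectwise weak equivalences without any cofibrancy hypothesis on $Q$, $Q'$'', and suggest the termwise-cofibrancy assumption is dispensable on the input side. It is not. The ``all objects fibrant'' property of EKMM spectra handles the \emph{end} step --- it guarantees that $\Map_T(\bar w(T)^{\mathsf{rev}},-)$ takes objectwise weak equivalences of $\mathsf{T}(I)$-diagrams to weak equivalences. But you still need $g_T: Q(T) \to Q'(T)$ to be a weak equivalence for every tree $T$, and $Q(T) = \Smsh_{t\in T} Q(I_t)$ is a smash product. Smash products in $\spectra$ do not preserve weak equivalences between arbitrary objects, only between cofibrant ones; this is precisely where the termwise-cofibrancy of $Q$ and $Q'$ is used. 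The paper makes this explicit: ``Here we use the condition that $Q$ and $Q'$ are termwise-cofibrant to ensure that $Q(T) \to Q'(T)$ is a weak equivalence.'' The genuine asymmetry between the two halves lies only in the \emph{conclusion} (the paper asserts $BP$ is again termwise-cofibrant, but makes no such claim for $CQ$), not in the hypotheses.
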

\begin{proof}
The first statement is \cite[8.5]{arone/ching:2009}, but we give a slightly different proof that dualizes to the second statement. For each nonempty finite set $I$, the category $\mathsf{T}(I)$ is Reedy (see \cite[\S15]{hirschhorn:2003}) with degree function given by the number of internal vertices in a tree. There is therefore a Reedy model structure on the categories of functors $\mathsf{T}(I) \to \spectra$ and $\mathsf{T}(I)^{op} \to \spectra$.

In the Reedy category $\mathsf{T}(I)^{op}$ every non-identity morphism decreases degree so Reedy cofibrant diagrams are just the objectwise cofibrant diagrams. In particular, the diagram $P(-): \mathsf{T}(I)^{op} \to \spectra$ of Lemma \ref{lem:operad-functor} is Reedy cofibrant when $P$ is a termwise-cofibrant operad. Thus $f$ determines an objectwise weak equivalence of Reedy cofibrant diagrams $P(T) \weq P'(T)$.

To see that $Bf$ is a weak equivalence, it is sufficient, by \cite[18.4.13]{hirschhorn:2003}, to show that the functor
\[ \bar{w}(-): \mathsf{T}(I) \to \sset \]
is Reedy cofibrant, for all nonempty finite sets $I$. For a given $I$-tree $T$, the latching object
\[ \colim_{T' < T} \bar{w}(T') \]
is the sub-simplicial set of $\bar{w}(T)$ given by those faces of $w(T)$ the correspond to some internal edge having length $0$. To see this, note that for $T' < T'' \leq T$, the map $\bar{w}(T') \to \bar{w}(T'')$ is an inclusion of simplicial sets. It follows that the latching map
\[ \colim_{T' < T} \bar{w}(T') \to \bar{w}(T) \]
is a cofibration of simplicial sets, hence $\bar{w}(-)$ is a Reedy cofibrant diagram, as required.

For the second statement, notice similarly that the Reedy fibrant diagrams $\mathsf{T}(I) \to \spectra$ are the objectwise fibrant diagrams, that is, all the diagrams (since every object in $\spectra$ is fibrant). Hence $g$ induces a weak equivalence $Q(-) \weq Q'(-)$ of Reedy fibrant diagrams. (Here we use the condition that $Q$ and $Q'$ are termwise-cofibrant to ensure that $Q(T) \to Q'(T)$ is a weak equivalence.) The reversed simplicial sets $\bar{w}(T)^{\mathsf{rev}}$ still form a Reedy cofibrant diagram and so, again by \cite[18.4.13]{hirschhorn:2003}, $Cg$ is a weak equivalence.
\end{proof}

\begin{theorem} \label{thm:W-CB}
Let $P$ be a termwise-cofibrant reduced operad in $\spectra$. Then the map
\[ \theta: WP \to CBP \]
of Definition \ref{def:W-CB} is a weak equivalence of operads.
\end{theorem}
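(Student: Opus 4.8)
The plan is to reduce the statement to the case of \emph{trivial} operads, for which $WP$ and $CBP$ can be evaluated by hand, and to extract the required equivalence from the computation carried out in \S\ref{sec:P-CBP}.

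Since a morphism of reduced operads is a weak equivalence in the projective model structure precisely when it is a termwise weak equivalence of underlying symmetric sequences, it is enough to show that $\theta(I)\colon WP(I)\to CBP(I)$ is a weak equivalence of spectra for every nonempty finite set $I$. The cases $|I|\le 2$ I would handle directly: for $|I|=1$ both sides equal the sphere spectrum and $\theta$ is the identity, while for $|I|=2$ the category $\mathsf{T}(I)$ has only the object $\tau_I$, so $WP(I)\isom P(I)$ and $CBP(I)\isom\Map(\bar{w}(\tau_I)^{\mathsf{rev}},BP(\tau_I))\isom\Map(S^1,S^1\smsh P(I))$, with $\theta(I)$ the natural comparison map, a weak equivalence because $\Sigma$ and $\Omega$ are inverse self-equivalences of $\spectra$.

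For general $I$, I would use that $WP(I)$ and $CBP(I)$ are a coend and an end over the Reedy category $\mathsf{T}(I)$ (graded by number of internal vertices); termwise-cofibrancy of $P$ makes $P(-)$ Reedy cofibrant and, by Proposition \ref{prop:bar-cobar-homotopy}, makes $BP$ termwise-cofibrant, so these (co)ends are homotopy invariant. Filter $WP(I)$ and $CBP(I)$ by the skeletal filtration of $\mathsf{T}(I)$, refining the one on $CBP(I)$ by the analogous skeletal filtrations internal to each $BP(I_u)$ occurring in $BP(U)=\Smsh_{u\in U}BP(I_u)$. Inspecting the simplicial maps $\hat{\theta}_{T,U}$ and $\theta_{T,U}$ of Definition \ref{def:W-CB} should show that $\theta$ respects these filtrations, and on associated graded the composition maps of $P$ — which strictly lower the number of internal vertices — contribute only null maps. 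Hence $\operatorname{gr}WP(I)$, $\operatorname{gr}CBP(I)$ and $\operatorname{gr}\theta(I)$ depend only on the underlying symmetric sequence $A$ of $P$ and coincide with the data for the trivial operad $\mathsf{triv}(A)$ (same symmetric sequence, all composition maps $P(T)\to P(T/e)$ with $|T|\ge 2$ set to zero); as the filtrations are finite, it then suffices to treat $P=\mathsf{triv}(A)$ with $A$ termwise-cofibrant. For such a $P$ the functors $T\mapsto P(T)$ kill every non-invertible morphism of $\mathsf{T}(I)$, so the coends defining $WP(I)$ and $BP(I)$ collapse to finite wedge sums $\Wdge_{[T]}(\Delta[T]/\partial_0\Delta[T])\smsh A(T)$ and $\Wdge_{[T]}S^{|T|}\smsh A(T)$ over isomorphism classes of $I$-trees, where $\partial_0\Delta[T]$ is the union of the faces of $\Delta[T]$ on which some internal edge has value $0$ (so $\Delta[T]/\partial_0\Delta[T]$ is weakly contractible unless $T=\tau_I$) and $|T|$ is the number of internal vertices. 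Since $\Map(K,-)$ preserves finite wedges for $K$ a finite pointed simplicial set, $CB(\mathsf{triv}A)(I)=\Map_U(\bar{w}(U)^{\mathsf{rev}},B(\mathsf{triv}A)(U))$ can be evaluated explicitly; that $\theta$ then restricts to a weak equivalence on the resulting pieces — all but the one indexed by $\tau_I$, which is $A(I)$ on both sides, being weakly contractible — is precisely the generalization of \cite[20.3]{arone/ching:2009} established in \S\ref{sec:P-CBP}.

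The step I expect to be the main obstacle is the one reducing to trivial operads: checking that $\theta$ is genuinely compatible with the Reedy filtrations and that its associated graded is insensitive to the operadic multiplication of $P$. This is a combinatorial matter of tracking, in Definition \ref{def:W-CB}, exactly which faces of the cubes $\Delta[T]$ and $w(U)$ the maps $\hat{\theta}_{T,U}$ — built from the interval maps $h$ and $r$ — carry to basepoints, and how that interacts with the latching maps of the filtrations. Once this is in place, the reduction is formal and the explicit trivial-operad computation, together with the input from \S\ref{sec:P-CBP}, finishes the proof.
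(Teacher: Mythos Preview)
Your overall strategy---reduce to trivial operads and compute there---matches the paper's, but there are two genuine gaps.

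First, the filtration you propose is different from what the paper uses and is not clearly workable as stated. The paper does not filter by the Reedy degree on $\mathsf{T}(I)$; it truncates by \emph{arity}, using the tower of operads $P_{\leq n}$ and the fibre sequences $P_{=n}\to P_{\leq n}\to P_{\leq(n-1)}$. This is much cleaner because (i) both $WP(I)$ and $CBP(I)$ literally only depend on $P_{\leq |I|}$, so the induction terminates, and (ii) one needs to show that both $W$ and $CB$ send these to homotopy-fibre sequences. For $W$ this is immediate from $WP\simeq P$; for $CB$ it is \emph{not} formal and is exactly where Proposition~\ref{prop:P-CBP-symseq} (i.e.\ \S\ref{sec:P-CBP}) enters: the natural zigzag $P\simeq CBP$ transports the fibre sequence on the $P$ side to one on the $CB$ side. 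Your Reedy/skeletal scheme runs into the problem that $CBP(I)$ is an \emph{end} and so carries a tower rather than a filtration; your proposed refinement by internal filtrations on each $BP(I_u)$ is vague, and checking that $\theta$ respects all of this and that the composition maps really vanish on associated graded is at least as much work as the paper's route.

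Second---and this is the more serious gap---you misidentify what \S\ref{sec:P-CBP} proves. That section constructs a zigzag of symmetric-sequence equivalences $P\simeq CBP$ through two-sided bar and cobar constructions; it says nothing about the map $\theta$. In particular it does \emph{not} establish that $\theta$ is an equivalence for trivial operads. The paper handles the trivial case separately: Lemma~\ref{lem:omega-sigma} computes $CBA\simeq\Omega\Sigma A$ for trivial $A$, and then one checks that the square
\[
\begin{diagram}
\node{WA}\arrow{e,t}{\theta}\arrow{s,l}{\eta}\node{CBA}\arrow{s,r}{\epsilon}\\
\node{A}\arrow{e,b}{r^{\#}}\node{\Omega\Sigma A}
\end{diagram}
\]
commutes, where $\eta$, $\epsilon$ and $r^{\#}$ are known equivalences; this is what pins down $\theta$ itself. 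Your sketch of the trivial case gestures at the right splittings of $WA$ and $BA$, but the last step---that \emph{this particular map $\theta$} lands as an equivalence---needs an explicit check, not an appeal to \S\ref{sec:P-CBP}.
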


A significant piece of the proof of this was given in \cite[20.3]{arone/ching:2009} where, together with Greg Arone, we proved most of the following result.

\begin{proposition} \label{prop:P-CBP-symseq}
For a termwise-cofibrant reduced operad $P$, there is a natural zigzag of weak equivalences of \emph{symmetric sequences}
\[ P \homeq CBP. \]
\end{proposition}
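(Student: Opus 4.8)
\textbf{Proof proposal for Proposition \ref{prop:P-CBP-symseq}.}

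The plan is to reduce to the case of \emph{trivial} operads, where both sides can be computed explicitly, and then to bootstrap to the general case via a filtration or cell-induction argument on the cofibrant operad $P$. First I would recall the construction from \cite[20.3]{arone/ching:2009}: for a termwise-cofibrant reduced operad $P$ there is an intermediate symmetric sequence (roughly, a ``partial'' or ``two-sided'' bar construction $B(P, P, S)$ or its analogue built from trees with two kinds of edge-decorations) that receives a map from $P$ and maps to $CBP$, and one shows each leg is a termwise weak equivalence. So the real content is to check that each map in this zigzag is a termwise weak equivalence of spectra, for which it suffices — by the coend/end description of $BP$ and $CQ$ — to work one finite set $I$ at a time and exploit the tree filtration.

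The key steps, in order, would be: (1) Observe that $CBP(I) = \Map_{T \in \mathsf{T}(I)}(\bar w(T)^{\mathsf{rev}}, BP(T))$, and that $BP(T) = \Smsh_{t \in T} BP(I_t)$, so the whole construction is assembled from the building blocks $BP(I_t)$ via ends and smash products indexed by trees. (2) Filter $\mathsf{T}(I)$ by the number of internal vertices — this is the Reedy structure already used in Proposition \ref{prop:bar-cobar-homotopy} — and analyze the associated tower; the fibers of the tower are expressible in terms of the reduced simplicial sets $\bar w(T)/\partial$ smashed with $P(T)$, which (since $\bar w(T)$ is a quotient of a product of intervals with a contractible-to-a-point behaviour after collapsing $w_0(T)$) carry a great deal of cancellation. (3) Reduce to the trivial operad: since $P$ is termwise-cofibrant it is (up to weak equivalence, hence up to termwise weak equivalence by Proposition \ref{prop:bar-cobar-homotopy}) built from free operads, and the free operad on a symmetric sequence $M$ has $B$ and $CB$ computable via the combinatorics of trees decorated by $M$; more directly, one can reduce to $P$ with $P(1) = S$ and $P(I)$ weakly contractible for $|I| \geq 2$ (the ``trivial'' operad $\ast$), for which $BP \simeq \mathsf{Com}$ after Koszul-type cancellation and $CBP \simeq \ast$, matching $P$. (4) Finally, run an induction over a cell structure of $P$ as a cofibrant operad (cells being pushouts along free operad maps $\mathsf{Free}(\partial \Delta^n_+ \smsh M) \to \mathsf{Free}(\Delta^n_+ \smsh M)$), using that $B$ takes such pushouts to computable pushouts of cooperads and $CB$ preserves the resulting weak equivalences by Proposition \ref{prop:bar-cobar-homotopy}, together with a gluing/colimit argument (the tree-indexed coends commute with the relevant filtered colimits and pushouts of cofibrations).

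The main obstacle I expect is step (3)–(4): making the ``Koszul cancellation'' precise at the spectrum level. In the algebraic setting (chain complexes) one has the Berger–Moerdijk isomorphism $W(H,P) \isom CBP$ and the acyclicity of the bar-cobar complex is a direct homological computation; over spectra there is no such strict algebra, so one must argue that the tower in step (2) has fibers that, after summing over all $T$ with a fixed underlying unlabelled shape and all labellings, telescope to something weakly trivial. Concretely, for each internal edge one gets a contribution governed by the pair $(\bar w(T), \bar w(T/e))$, and one needs that the resulting ``edge contraction'' maps exhibit the whole tree-indexed diagram as homotopy-equivalent to its value on the corolla $\tau_I$ — this is exactly the kind of statement deferred to \S\ref{sec:P-CBP}, where a generalization of the $\cite{arone/ching:2009}$ argument (a ``bar-cobar resolution'' lemma at the level of symmetric sequences, using the contractibility of the relevant cube quotients and a suitable simplicial homotopy as in the proof of Lemma \ref{lem:WP-P}) supplies the needed termwise contractibility. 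Once that input is in hand, the zigzag $P \to B(P,P,S) \to CBP$ (or whatever intermediate object is used) consists of termwise weak equivalences and the Proposition follows.
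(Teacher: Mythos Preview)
Your intermediate object is the right idea: the paper does go through a two-sided bar construction, writing
\[ P \;\simeq\; B(P,P,P) \;\to\; C\bigl(B(P,P,\mathsf{1}),\,BP,\,B(\mathsf{1},P,P)\bigr) \;\simeq\; C(\mathsf{1},BP,\mathsf{1}) \;\cong\; CBP, \]
and the content is that the middle map $\delta$ is a weak equivalence. But your induction scheme is the wrong one.

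The paper does \emph{not} induct on a cell structure of the operad $P$. It fixes $P$ and $L$ and varies the right $P$-module $R$ through its truncation tower $R_{=n} \to R_{\leq n} \to R_{\leq(n-1)}$. The two key ingredients are: (i) both $B(-,P,L)$ and $C(B(-),BP,BL)$ take homotopy-fibre sequences of right $P$-modules to homotopy-fibre sequences of symmetric sequences (this uses the Reedy argument on generalized trees); and (ii) for a \emph{trivial} right $P$-module $R$ one computes directly that $C(BR,BP,BL) \simeq R\,\hat\circ\,BL$ via the cosimplicial description of the cobar construction and an extra-codegeneracy argument, while $B(R,P,L) \cong R \circ BL$, and the comparison is the finite-wedge-to-product equivalence. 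Induction on $n$ then finishes the proof for $R=P$.

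Your proposed cell induction on $P$ runs into trouble at step~(4): to push the equivalence across a pushout of operads you would need $B$ to take homotopy pushouts of operads to homotopy pushouts (as symmetric sequences or pre-cooperads), and then $C$ to preserve the resulting equivalence. The first of these is true but is essentially equivalent to knowing $B \simeq \mathbb{B}$ on cofibrant objects, which is downstream of the very result you are trying to prove; and $C$, being a right adjoint, has no reason to commute with the colimits appearing in a cell attachment. Your step~(3) is also off: the base case you want is not a trivial \emph{operad} but a trivial right \emph{module} over the given $P$; and the claim ``$BP \simeq \mathsf{Com}$ for the terminal operad'' is false (for $P(I)=*$ with $|I|\geq 2$, $BP$ is termwise contractible as well). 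The module-truncation approach sidesteps all of this because fibre sequences of modules are detected termwise, and both $B$ and $C$ manifestly respect them.
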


Note that this zigzag includes non-operad maps, so does not immediately imply Theorem \ref{thm:W-CB}. We previously proved this result for the double Koszul dual $KKP$ instead of $CBP$, though these are equivalent under finiteness hypotheses. In section \ref{sec:P-CBP} below we prove the full version of Proposition \ref{prop:P-CBP-symseq}.

In this section we show how Theorem \ref{thm:W-CB} follows from \ref{prop:P-CBP-symseq}. We do this by considering the tower of `truncations' of the operad $P$.

\begin{definition}[Truncated operads] \label{def:truncation}
Let $P$ be a termwise-cofibrant reduced operad. For an integer $n \geq 1$, the \emph{\ord{n} truncation} of $P$ is the reduced operad $P_{\leq n}$ given by
\[ P_{\leq n}(I) := \begin{cases} P(I) & \text{if $|I| \leq n$}; \\ \; \; * & \text{otherwise}. \end{cases} \]
with composition and unit maps equal to those for $P$ except when those maps are forced to be trivial.

We also define the \emph{\ord{n} layer} of $P$ to be the reduced operad $P_{=n}$ given by
\[ P_{=n}(I) := \begin{cases} P(I) & \text{if $|I| = n$}; \\ \; \; S & \text{if $|I| = 1$}; \\  \; \; * & \text{otherwise}. \end{cases} \]
with the trivial operad structure.
\end{definition}

Notice that for $n \geq 2$, there is a natural sequence of reduced operads
\[ P_{=n} \to P_{\leq n} \to P_{\leq(n-1)}. \]
This is a \emph{homotopy-fibre sequence} of reduced operads in the sense that
\[ P_{=n}(I) \to P_{\leq n}(I) \to P_{\leq(n-1)}(I) \]
is a homotopy-fibre sequence of spectra whenever $|I| \geq 2$.

Now consider the following diagram of spectra
\[ \tag{*} \begin{diagram}
  \node{WP_{=n}(I)} \arrow{e,t}{\theta} \arrow{s} \node{CBP_{=n}(I)} \arrow{s} \\
  \node{WP_{\leq n}(I)} \arrow{e,t}{\theta} \arrow{s} \node{CBP_{\leq n}(I)} \arrow{s} \\
  \node{WP_{\leq(n-1)}(I)} \arrow{e,t}{\theta} \node{CBP_{\leq(n-1)}(I)}
\end{diagram} \]
where $|I| \geq 2$. The left-hand column is a homotopy-fibre sequence of spectra by Lemma \ref{lem:WP-P}. The right-hand column is a homotopy-fibre sequence of spectra by Proposition \ref{prop:P-CBP-symseq}.

We now show that the top horizontal map is a weak equivalence for any $n \geq 1$ and any nonempty finite set $I$. From this we deduce, by induction on $n$, that $\theta$ is an equivalence for any $P_{\leq n}$.

In fact, we prove that $\theta: WA \to CBA$ is a weak equivalence for any termwise-cofibrant $A$ with a trivial operad structure. We first show by another means that $CBA$ is equivalent to $A$ when $A$ is trivial, and then show this equivalence is compatible with $\theta$.

\begin{definition} \label{def:omega-sigma}
For a reduced symmetric sequence $A$, we write $\Omega A$ for the reduced symmetric sequence with
\[ \Omega A(I) := \Map((S^1)^{\mathsf{rev}},A(I)) \]
and $\Sigma A$ for the reduced symmetric sequence
\[ \Sigma A(I) := S^1 \smsh A(I), \]
for finite sets $I$ with $|I| \geq 2$. In both cases $S^1$ is the simplicial circle $\Delta[1]/\partial\Delta[1]$.
\end{definition}

\begin{lemma} \label{lem:omega-sigma}
For a termwise-cofibrant trivial reduced operad $A$, there is a weak equivalence of operads
\[  \epsilon: CBA \weq \Omega \Sigma A \]
where the reduced symmetric sequence $\Omega \Sigma A$ is given the trivial operad structure.
\end{lemma}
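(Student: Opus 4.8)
The plan is to compute $CBA$ explicitly when $A$ has trivial operad structure, and identify the answer with $\Omega\Sigma A$. When the operad structure on $A$ is trivial, the functor $P(-): \mathsf{T}(I)^{op} \to \spectra$ of Lemma~\ref{lem:operad-functor} sends every morphism $T/e \to T$ to a map $A(T) \to A(T/e)$ built only out of the unit $S \to A(1)$; in particular, since $A$ is reduced, $A(T)$ is a smash product of copies of $A(I_t)$ with no nontrivial composition ever occurring. Concretely, $A(T)$ is nonbasepoint only when $T$ has exactly one internal vertex (so $A(T) = A(I)$ via $A(\tau_I)$), at least once we quotient appropriately in the bar construction. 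So the first step is to unwind $BA(I) = \bar{w}(T) \smsh_{T} A(T)$ for trivial $A$: the corolla $\tau_I$ contributes $\bar{w}(\tau_I) \smsh A(I)$, and I will check that $\bar{w}(\tau_I) \cong S^1$ (it is $(\Delta[0]\times\Delta[1])/w_0$, i.e. the interval with both endpoints collapsed, since the only edge is the root edge and `root edge value $0$' is one endpoint while `any edge value $1$' --- here the root edge again --- is the other). Thus $BA(I) \homeq \Sigma A(I) = S^1\smsh A(I)$ for $|I|\geq 2$, with the induced cooperad structure being the trivial one (all decomposition maps factor through a basepoint, since a tree with two internal vertices has $A(T) = A(I_1)\smsh A(I_2)$ but the relevant face of $\bar w$ collapses). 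This is essentially the statement that $BA \homeq \Sigma A$ as cooperads with trivial structure.

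The second step is to apply the cobar construction. By a dual unwinding, $CQ(I) = \Map_{T}(\bar w(T)^{\mathsf{rev}}, Q(T))$ for a cooperad $Q$ with trivial structure reduces to the corolla term $\Map(\bar w(\tau_I)^{\mathsf{rev}}, Q(I))$ --- the contributions of trees with more internal vertices are mapping spectra out of $\bar w(T)^{\mathsf{rev}}$ into products of $Q$-terms smashed together, and the end condition forces these to be determined by the corolla term via the (trivial) structure maps. Since $\bar w(\tau_I) \cong S^1$, we get $CQ(I) \homeq \Map((S^1)^{\mathsf{rev}}, Q(I)) = \Omega Q(I)$. Combining with Step 1, $CBA(I) \homeq \Omega\Sigma A(I)$, and the operad structure on $CBA$ inherited from trivial $BA$ is again trivial. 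The precise way to make these reductions rigorous is to observe that $\mathsf{T}(I)$ has $\tau_I$ as initial object and that both $BA$ and the relevant functor into $\spectra$ are, after the quotients, left Kan extended from / right Kan extended to the corolla; alternatively, use the Reedy filtration of $\mathsf{T}(I)$ by number of internal vertices exactly as in the proof of Proposition~\ref{prop:bar-cobar-homotopy} and check that only the bottom filtration layer contributes.

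The map $\epsilon$ itself I would construct directly: there is a canonical map $CBA(I) \to \Map(\bar w(\tau_I)^{\mathsf{rev}}, BA(\tau_I)) = \Map(\bar w(\tau_I)^{\mathsf{rev}}, \bar w(\tau_I)\smsh A(I))$ given by evaluating an end element at the corolla, followed by the map induced by the collapse $\bar w(\tau_I) = S^1 \to S^0$ on the inner factor --- wait, rather one uses that $\bar w(\tau_I)^{\mathsf{rev}} \cong \bar w(\tau_I) = S^1$ and the evident evaluation $\Map(S^1, S^1\smsh A(I)) \to \Map(S^1, S^1 \smsh A(I))$, i.e. $\epsilon_I$ is essentially the restriction-to-corolla map $CBA(I) \to \Map((S^1)^{\mathsf{rev}}, BA(I)) \to \Omega\Sigma A(I)$ where the second map comes from a weak equivalence $BA(I)\weq \Sigma A(I)$ as in Step 1. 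One then checks $\epsilon$ is a map of operads (both sides trivial, so this is just naturality plus the observation that on the corolla terms the grafting maps act trivially) and that it is a termwise weak equivalence by Steps 1 and 2. The main obstacle I anticipate is bookkeeping: verifying carefully that the higher-tree terms genuinely contribute nothing, i.e. that the end/coend over $\mathsf{T}(I)$ collapses to the corolla term when the (co)operad structure is trivial, and that all the structure maps (relabelling, grafting, the maps $d$ of Remark~\ref{rem:axiom}) are compatible with this collapse. This requires being slightly careful that "trivial operad structure" really does kill every decomposition map of $BA$ into trees with $\geq 2$ internal vertices, rather than just making them factor in some uncontrolled way --- but this follows because such a decomposition map lands in a term $\bar w(T)\smsh(\cdots)$ whose $\bar w(T)$-part is hit only through the face where the grafted edge has length $1$, which is in $w_0$, hence basepoint.
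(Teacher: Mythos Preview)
Your Step 1 contains a genuine error: it is \emph{not} true that $BA(I) \simeq \Sigma A(I)$ when $A$ is a trivial operad. The object $A(T) = \bigwedge_{t \in T} A(I_t)$ is typically nontrivial for \emph{every} tree $T$, not just the corolla; a trivial operad structure means the composition maps $A(T') \to A(T)$ (for $T < T'$) are null, not that the objects $A(T')$ themselves vanish. Working through the coend with these null structure maps, one finds instead
\[
BA(I) \;\cong\; \bigvee_{[T]} \bigl(\bar{w}(T)/\bar{w}_1(T)\bigr) \smsh A(T) \;\cong\; \bigvee_{[T]} (\Sigma A)(T),
\]
where $\bar{w}_1(T)$ is the union of the faces on which some internal edge has length $0$, and the wedge is over isomorphism classes of $I$-trees. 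For instance $BA(3) \simeq \Sigma A(3) \vee \bigl(\Sigma A(2) \smsh \Sigma A(2)\bigr)^{\vee 3}$, which is strictly larger than $\Sigma A(3)$. In particular $BA$ is not the trivial cooperad on $\Sigma A$, and your Step 2 cannot be applied to it as written.

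The paper keeps track of all of these tree contributions. Using the wedge-to-product equivalence for finite wedges of spectra and the Reedy cofibrancy of $\bar{w}(-)^{\mathsf{rev}}$, it rewrites $CBA(I)$, up to weak equivalence, as the product $\prod_{[T]} \Map\bigl(\bar{w}(T)^{\mathsf{rev}}, (\Sigma A)(T)\bigr)$. Only at this final stage does the reduction to the corolla occur, and for a different reason than you suggest: it is because $\bar{w}(T)$ is \emph{contractible} for $T \neq \tau_I$, so every non-corolla factor of the product is weakly contractible and projection onto the $\tau_I$ factor gives the equivalence $\epsilon$. Note also that your Step 2, even taken on its own, needs this contractibility argument rather than an appeal to the end condition: for a cooperad with trivial decomposition maps the structure maps $Q(\tau_I) \to Q(T)$ are null, so the end condition becomes vacuous and the end is the full product over all trees, not just the corolla term.
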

\begin{proof}
Because the operad structure maps in $A$ are trivial, $BA(I)$ splits up as
\[ BA(I) \isom \Wdge_{[T]} (\bar{w}(T)/\bar{w}_1(T)) \smsh A(T) \isom \Wdge_{[T]} (\Sigma A)(T) \]
where $\bar{w}_1(T)$ is the subspace of $\bar{w}(T)$ given by the faces where one of the edges of the tree $T$ is assigned length $0$. Here the wedge product is taken over isomorphism classes of trees in $\mathsf{T}(I)$.

If $U$ is another $I$-tree, then we get from this
\[ BA(U) \isom \Wdge_{[T] \geq [U]} (\Sigma A)(T) \weq \prod_{[T] \geq [U]} (\Sigma A)(T) \]
where the wedge and product are over isomorphisms classes of $T$ such that $T \geq U$. Now we showed in the proof of Proposition \ref{prop:bar-cobar-homotopy} that $\bar{w}(-)^{\mathsf{rev}}$ is a Reedy cofibrant $\mathsf{T}(I)$-diagram of simplicial sets. It follows that we have a weak equivalence
\[ \epsilon_1: CBA(I) \weq \Map_{U \in \mathsf{T}(I)} \left( \bar{w}(U)^{\mathsf{rev}},\prod_{[T] \geq [U]} (\Sigma A)(T) \right). \]
The latter object is isomorphic to
\[ \prod_{[T]} \Map(\bar{w}(T)^{\mathsf{rev}}, (\Sigma A)(T)). \]
The only $T$ for which $\bar{w}(T)^{\mathsf{rev}}$ is not contractible is $T = \tau_I$. The projection from this product onto the term for $T = \tau_I$ is therefore a weak equivalence. The composite of this projection with $\epsilon_1$ is our weak equivalence $\epsilon$.

It remains to check that $\epsilon$ is a morphism of operads where $\Omega \Sigma A$ is given the trivial operad structure. From the definition of the operad structure on $CBA$, nontrivial products act trivially on the tree $\tau_I$, so under $\epsilon$ map trivially to $\Omega \Sigma A$.
\end{proof}

Now we can complete the proof of our main result.

\begin{proof}[Proof of Theorem \ref{thm:W-CB}]
First notice that if $|I| \leq n$, then
\[ WP(I) \isom WP_{\leq n}(I), \quad CBP(I) \isom CBP_{\leq n}(I) \]
so it is sufficient to prove that $\theta: WP_{\leq n} \to CBP_{\leq n}$ is a weak equivalence for all $n$. We do this by induction on $n$ using the diagram (*). This reduces to proving that $\theta: WA \to CBA$ is a weak equivalence for a trivial reduced operad $A$.

We claim that there is a commutative diagram of symmetric sequences
\[ \tag{**} \begin{diagram}
  \node{WA} \arrow{e,t}{\theta} \arrow{s,lr}{\eta}{\sim} \node{CBA} \arrow{s,lr}{\sim}{\epsilon} \\
  \node{A} \arrow{e,tb}{\sim}{r^{\#}} \node{\Omega \Sigma A}
\end{diagram} \]
The bottom horizontal map is adjoint to the `flip' map
\[ r \smsh A: (S^1)^{\mathsf{rev}} \smsh A \to S^1 \smsh A \]
determined by the reflection $r: (S^1)^{\mathsf{rev}} \to S^1$. This is a weak equivalence, for termwise-cofibrant $A$, since $\spectra$ is a stable simplicial model category.

To check that (**) is commutative, consider $WA$ for a trivial operad $A$. This splits up as
\[ WA(I) \isom \Wdge_{T \in \mathsf{T}(I)} \left[\Delta[T]/\Delta_0[T]\right] \smsh A(T) \]
where $\Delta_0[T]$ is the subspace of the cube $\Delta[T]$ consisting of the faces for which one of the internal edges of $T$ is assigned length $0$. The map $\eta$ is the collapse onto the factor for $T = \tau_I$.

Following through the definition of $\theta$ and $\epsilon$, we see that the composite
\[ WA(I) \to CB(I) \to \Omega \Sigma A(I) \]
is the trivial map on all the terms for which $T \neq \tau_I$. For the term $T = \tau_I$ it is the flip map $r^{\#}: A(I) \to \Omega \Sigma A(I)$. Thus the diagram commutes.

Since all the other maps in the diagram (**) are weak equivalences, it follows that $\theta$ is also a weak equivalence. This completes the proof of Theorem \ref{thm:W-CB}.
\end{proof}

We have now shown that a reduced operad $P$ can be recovered, up to weak equivalence, from its bar construction $BP$ together with the cooperad structure. In the next section we answer the dual question: can a reduced cooperad $Q$ be recovered from its cobar construction $CQ$? One might hope to do this by dualizing the approach of this section and construct weak equivalences of cooperads of the form
\[ BCQ \weq W^cQ \lweq Q \]
where $W^cQ$ is a `co-$W$-construction' for cooperads. However, the $W$-construction does not dualize immediately. This is because there is no dual version of the isomorphism $d$ of Remark \ref{rem:axiom}. In Definition \ref{def:cobar} we used a map $d^*$ that is dual to the map $d$, but $d^*$ is not in general an isomorphism. We would need an inverse to $d^*$ to form the `co-$W$-construction'.

In the next section, we solve this problem by expanding our notion of a cooperad slightly. This change allows for the existence of $W^cQ$ and of the weak equivalences relating it to $Q$ and $BCQ$.

\section{A model for the homotopy theory of cooperads} \label{sec:precooperads}

In this section we describe a model category $\mathsf{PreCooperad}$ that contains the category $\mathsf{Cooperad}$ of reduced cooperads as a full subcategory. Every `pre-cooperad' is weakly equivalent, in this model structure, to a termwise-cofibrant cooperad. We extend the cobar construction $C$ from cooperads to pre-cooperads and show that $C$ is the right adjoint of a Quillen equivalence between $\mathsf{Operad}$ (with the projective model structure) and $\mathsf{PreCooperad}$. The left adjoint is not precisely the bar construction, but is equivalent to it (at least on cofibrant operads).

\begin{remark}
Our results apply to operads in other models for the stable homotopy category. For example, Kro \cite{kro:2007} has shown that the category of operads in orthogonal spectra inherits a projective model structure (from the positive stable model structure on orthogonal spectra). There is a Quillen equivalence between this model category and a corresponding model structure on pre-cooperads in orthogonal spectra. Guti{\'e}rrez and Vogt \cite{gutierrez/vogt:2010} have done the same thing for symmetric spectra using work of Elmendorf and Mandell \cite{elmendorf/mandell:2006}.
\end{remark}

We start by describing the category $\mathsf{PreCooperad}$. For this we need to collect all the individual sets of trees $\mathsf{T}(I)$ into a single category, and add in morphisms that correspond to relabelling.

\begin{definition}[The category $\mathsf{Tree}$] \label{def:tree-category}
Let $\mathsf{Tree}$ denote the category whose objects are the $I$-trees for all finite sets $I$ with $|I| \geq 2$, and for which a morphism $T \to T'$, where $T \in \mathsf{T}(I)$ and $T' \in \mathsf{T}(I')$, consists of a bijection $\sigma: I \to I'$ such that $\sigma_*T \leq T'$ in $\mathsf{T}(I')$. Composition in $\mathsf{Tree}$ is composition of bijections -- this is well-defined because if $\sigma_*T \leq T'$ and $\rho_*T' \leq T''$ then $\rho_*\sigma_*T \leq T''$.
\end{definition}

\begin{definition}[Pre-cooperads] \label{def:precooperad}
A \emph{pre-cooperad} $Q$ consists of a functor
\[ Q(-): \mathsf{Tree} \to \spectra \]
and natural maps
\[ m_{T,i,U}: Q(T) \smsh Q(U) \to Q(T \cup_i U) \]
where $T \in \mathsf{T}(I)$, $U \in \mathsf{T}(J)$ and $i \in I$. The maps $m_{T,i,U}$ are required to be associative in an appropriate sense. A \emph{morphism of pre-cooperads} $Q \to Q'$ consists of natural transformations $Q(T) \to Q'(T)$ that commute appropriately with the maps $m_{T,i,U}$. We thus obtain a category $\mathsf{PreCooperad}$ of pre-cooperads and their morphisms.
\end{definition}

\begin{example} \label{ex:precooperad}
In Definition \ref{def:cooperad} we saw that any reduced cooperad $Q$ determines a pre-cooperad via Definition \ref{def:A(T)}. In this case the maps $m_{T,i,U}$ are all isomorphisms. Conversely, given a pre-cooperad $Q$ in which the maps $m_{T,i,U}$ are isomorphisms, we can define a reduced cooperad, which we also denote $Q$, by setting $Q(I) := Q(\tau_I)$. The decomposition maps are then given by the composites
\[ Q(I \cup_i J) = Q(\tau_{I \cup_i J}) \to Q(\tau_I \cup_i \tau_J) \isom Q(\tau_I) \smsh Q(\tau_J) = Q(I) \smsh Q(J). \]
This construction identifies the category of reduced cooperads with a subcategory of the category of pre-cooperads. If the pre-cooperad $Q$ is actually a cooperad, then any morphism of pre-cooperads $Q \to Q'$ is determined by its value on the terms $Q(\tau_I)$. This tells us that the reduced cooperads form a \emph{full} subcategory of $\mathsf{PreCooperad}$.
\end{example}

In some ways the key observation of this section is that Definition \ref{def:cobar} of the cobar construction $CQ$ does not require that $Q$ be a cooperad. It is sufficient for $Q$ to be a pre-cooperad.

\begin{definition}[Cobar construction for pre-cooperads] \label{def:cobar-precooperad}
For each pre-cooperad $Q$, we define $CQ$ to be the reduced operad given by
\[ CQ(I) := \Map_{T \in \mathsf{T}(I)}(\bar{w}(T),Q(T)) \]
with operad composition maps given by combining the $\nu_{T,i,U}$ of Definition \ref{def:bar} with the $m_{T,i,U}$ of Definition \ref{def:precooperad}. (As with the original definition of $CQ$, we are also using the maps $d^*$ of Remark \ref{rem:axiom}.) We thus obtain a functor
\[ C: \mathsf{PreCooperad} \to \mathsf{Operad}. \]
\end{definition}

Our first main goal is to show that $C$ is the right adjoint of a Quillen equivalence between operads and pre-cooperads. We start by describing a `strict' model structure on pre-cooperads, of which the model structure we are really interested in is a localization. The strict model structure has weak equivalences and fibrations defined termwise. To see that this indeed determines a model structure, we introduce `free' pre-cooperads.

\begin{definition}[Free pre-cooperads] \label{def:tree-monad}
We write $\spectra^{\mathsf{Tree}}$ for the category of functors $\mathsf{Tree} \to \spectra$. For $A \in \spectra^{\mathsf{Tree}}$, we define $\mathbb{F}A \in \spectra^{\mathsf{Tree}}$ by
\[ \mathbb{F}A(T) := \colim_{U \leq T} \Smsh_{u \in U} A(T_u) \isom \Wdge_{[U] : U \leq T} \Smsh_{u \in U} A(T_u). \]
This is the colimit calculated over the subcategory of $\mathsf{T}(I)$ consisting of the $I$-trees $U$ with $U \leq T$ and isomorphisms between them. Because there is at most a unique isomorphism between any two $I$-trees, this colimit is isomorphic to a coproduct taken over isomorphism classes of $I$-trees $U$ with $U \leq T$. The smash product is taken over all internal vertices $u \in U$ and $T_u$ refers to the part of the tree $T$ that collapses to the vertex $u$ under the collapse map determined by the inequality $U \leq T$.

Given a morphism $\sigma: T \to T'$ in $\mathsf{Tree}$ and $U \leq T$, we have $\sigma_*U \leq \sigma_*T \leq T'$. For $u \in U$, we can identify $T_u$ with the piece of $\sigma_*T$ that collapses to the corresponding vertex $u$ of $\sigma_*U$. Thus we have $T_u \leq T'_u$ and so the functor $A$ determines a map $A(T_u) \to A(T'_u)$. Putting such maps together, we get a map
\[ \mathbb{F}A(T) \to \mathbb{F}A(T') \]
that makes $\mathbb{F}A$ into a functor $\mathsf{Tree} \to \spectra$, and $\mathbb{F}$ into a functor $\spectra^{\mathsf{Tree}} \to \spectra^{\mathsf{Tree}}$.

We define a monad structure on $\mathbb{F}$ by noticing that
\[ \mathbb{F}\mathbb{F}A(T) \isom \colim_{V \leq U \leq T} \Smsh_{u \in U} A(T_u). \]
The composition map $\mathbb{F}\mathbb{F} \to \mathbb{F}$ is given by `forgetting' the variable $V$ in the indexing set for the wedge sum. The unit map $A \to \mathbb{F}A$ is given by the inclusions of $A(T)$ as the term corresponding to $U = \tau_I$.

These definitions make $\mathbb{F}A$ into a pre-cooperad for any $A \in \spectra^{\mathsf{Tree}}$ and we refer to $\mathbb{F}A$ as the \emph{free pre-cooperad on $A$}.
\end{definition}

\begin{lemma} \label{lem:precooperad}
The category $\mathsf{PreCooperad}$ is equivalent to the category of algebras over the monad $\mathbb{F}$. \qed
\end{lemma}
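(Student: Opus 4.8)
The plan is to exhibit an equivalence between $\mathsf{PreCooperad}$ and the category $\spectra^{\mathsf{Tree}}_{\mathbb{F}}$ of $\mathbb{F}$-algebras by constructing functors in both directions and checking they are mutually inverse. First I would note that an $\mathbb{F}$-algebra is by definition a functor $A \in \spectra^{\mathsf{Tree}}$ together with a structure map $\alpha: \mathbb{F}A \to A$ in $\spectra^{\mathsf{Tree}}$ satisfying the usual associativity and unit axioms ($\alpha \circ (\mathbb{F}\alpha) = \alpha \circ (\mu_A)$ and $\alpha \circ \eta_A = \mathrm{id}$, where $\mu, \eta$ are the multiplication and unit of the monad $\mathbb{F}$). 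I would then describe the comparison as follows.

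\emph{From pre-cooperads to $\mathbb{F}$-algebras.} Given a pre-cooperad $Q$, the underlying object is the functor $Q(-): \mathsf{Tree} \to \spectra$. For each $I$-tree $T$ and each $U \leq T$ in $\mathsf{T}(I)$, iterating the grafting description (using that $T$ is obtained, up to isomorphism, by grafting the fragments $T_u$ for $u \in U$, as noted after Definition \ref{def:grafting}) together with iterated use of the structure maps $m_{T,i,U}$ of Definition \ref{def:precooperad} produces a map $\Smsh_{u \in U} Q(T_u) \to Q(T)$. The appropriate associativity of the $m$'s guarantees this map is independent of the order in which the graftings are performed, and naturality in $U$ (with respect to the isomorphisms occurring in the colimit) lets these assemble into $\alpha_T: \mathbb{F}Q(T) = \colim_{U \leq T} \Smsh_{u \in U} Q(T_u) \to Q(T)$. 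Naturality in $T \in \mathsf{Tree}$ follows from naturality of the $m_{T,i,U}$. The unit axiom holds because the $U = \tau_I$ term maps to $Q(T)$ by the identity; the associativity axiom for the $\mathbb{F}$-action translates exactly into the (iterated) associativity of the $m_{T,i,U}$, via the description $\mathbb{F}\mathbb{F}Q(T) \cong \colim_{V \leq U \leq T} \Smsh_{u \in U} Q(T_u)$ from Definition \ref{def:tree-monad}.

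\emph{From $\mathbb{F}$-algebras to pre-cooperads.} Conversely, given $\alpha: \mathbb{F}A \to A$, define $Q = A$ as a functor and, for $T \in \mathsf{T}(I)$, $U \in \mathsf{T}(J)$, $i \in I$, let $m_{T,i,U}: A(T) \smsh A(U) \to A(T \cup_i U)$ be the composite of the inclusion $A(T) \smsh A(U) \hookrightarrow \mathbb{F}A(T\cup_i U)$ as the summand indexed by the two-vertex tree $\tau_I \cup_i \tau_J \leq T \cup_i U$ (whose fragments are exactly $T$ and $U$), followed by $\alpha_{T \cup_i U}$. Naturality of $m$ follows from naturality of $\alpha$ and functoriality of $\mathbb{F}$; associativity of the $m$'s follows from the associativity axiom for $\alpha$ applied to three-fold graftings.

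\emph{The two constructions are mutually inverse.} On objects both are the identity on the underlying $\mathsf{Tree}$-diagram, so it suffices to check the structure data match. Starting from a pre-cooperad $Q$, passing to an $\mathbb{F}$-algebra and back recovers $m_{T,i,U}$ because the recipe above picks out exactly the two-vertex term of $\alpha$, which by construction is $m_{T,i,U}$. Starting from an $\mathbb{F}$-algebra $(A,\alpha)$, building $m$ and then re-assembling $\alpha'$ recovers $\alpha$: on the $U$-summand of $\mathbb{F}A(T)$ the reconstructed map is an iterate of $m$'s, which by the associativity axiom for $\alpha$ equals $\alpha$ restricted to that summand (one induces on the number of internal vertices of $U$, the base case $U = \tau_I$ being the unit axiom). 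Morphisms correspond under both constructions since a map of $\mathbb{F}$-algebras is precisely a natural transformation commuting with the $\alpha$'s, which unwinds to a natural transformation commuting with the $m$'s, i.e.\ a morphism of pre-cooperads.

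\emph{Main obstacle.} The genuinely fiddly point is the coherence bookkeeping: one must pin down, for a chain $U \leq T$, a canonical isomorphism identifying $\Smsh_{u \in U}$ of the fragments of $T$ with (iterated) graftings, and verify that the map built from the $m_{T,i,U}$ does not depend on the bracketing/order of graftings used — this is exactly where the ``appropriate'' associativity of Definition \ref{def:precooperad} is consumed, and matching it cleanly against the ``forget $V$'' description of $\mu: \mathbb{F}\mathbb{F} \to \mathbb{F}$ in Definition \ref{def:tree-monad} requires care. Once that identification is fixed, the rest is formal monad-algebra yoga and a routine induction on the number of internal vertices.
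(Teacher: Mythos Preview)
Your proposal is correct, and indeed it is more detailed than the paper's own treatment: the paper states this lemma with only a \qed and no proof, treating the equivalence as evident from the definitions. Your argument is the standard one and is essentially the only reasonable approach here, so there is nothing substantive to compare.

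One small remark on presentation: in the direction from $\mathbb{F}$-algebras to pre-cooperads, you describe the summand of $\mathbb{F}A(T\cup_i U)$ indexed by $\tau_I \cup_i \tau_J$; it might be worth saying explicitly that this is the tree $U_0$ with exactly two internal vertices whose fragments $(T\cup_i U)_{u}$ for $u\in U_0$ are precisely $T$ and $U$, so that the summand really is $A(T)\smsh A(U)$ on the nose. Otherwise the bookkeeping is exactly as you describe, and your identification of the ``main obstacle'' (matching iterated associativity of the $m_{T,i,U}$ against the forget-$V$ description of $\mu:\mathbb{F}\mathbb{F}\to\mathbb{F}$) is on point.
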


\begin{remark} \label{rem:free-precooperad}
Interestingly, the free pre-cooperads correspond in some sense to trivial cooperads. Let $A$ be a symmetric sequence and extend $A$ to a functor $\mathsf{Tree} \to \spectra$ by setting
\[ A(T) = \begin{cases}
  A(I) & \text{if $T = \tau_I$ for some $I$}; \\
  \; \; * & \text{otherwise}.
\end{cases} \]
Then we have
\[ \mathbb{F}A(T) \isom \Smsh_{t \in T} A(I_t). \]
The pre-cooperad structure maps
\[ \mathbb{F}A(T) \smsh \mathbb{F}A(U) \to \mathbb{F}A(T \cup_i U) \]
are isomorphisms, meaning that the pre-cooperad $\mathbb{F}A$ is in this case an actual cooperad. Furthermore, for any nontrivial $T \to T'$ (i.e. not just a relabelling) the induced map $\mathbb{F}A(T) \to \mathbb{F}A(T')$ is trivial so that $\mathbb{F}A$ is the trivial cooperad based on the symmetric sequence $A$.
\end{remark}

\begin{prop} \label{prop:strict-precooperad}
The category of pre-cooperads is enriched, tensored and cotensored over the category of pointed simplicial sets and there is a right proper cellular simplicial model structure on $\mathsf{PreCooperad}$ in which a map $Q \to Q'$ is a weak equivalence (or, respectively, a fibration) if and only if $Q(T) \to Q'(T)$ is a weak equivalence (respectively a fibration) in $\spectra$, for each $T \in \mathsf{Tree}$. We refer to this as the \emph{strict model structure} on the category of pre-cooperads. These weak equivalences are the \emph{strict weak equivalences} of pre-cooperads, and these fibrations are the \emph{strict fibrations}.
\end{prop}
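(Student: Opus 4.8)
The plan is to identify $\mathsf{PreCooperad}$ with the category of algebras over the monad $\mathbb{F}$ of Definition~\ref{def:tree-monad}, via Lemma~\ref{lem:precooperad}, and to obtain the strict model structure by transferring the projective model structure along the free/forgetful adjunction $\mathbb{F}: \spectra^{\mathsf{Tree}} \rightleftarrows \mathsf{PreCooperad} : U$. First I would put the projective model structure on the diagram category $\spectra^{\mathsf{Tree}}$, in which weak equivalences and fibrations are detected at each object $T$ of $\mathsf{Tree}$. Since $\spectra$ is a cofibrantly generated (indeed cellular), right proper, simplicial model category in which every object is fibrant, and $\mathsf{Tree}$ is small, this projective structure exists, inherits all of these properties, is tensored and cotensored over $\sset$ pointwise, and has generating (acyclic) cofibrations of the form $F_T i$, where $F_T$ is left adjoint to evaluation at $T$ and $i$ runs over the generating (acyclic) cofibrations of $\spectra$. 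Write $\mathcal{I}$ and $\mathcal{J}$ for the resulting generating and generating acyclic cofibrations of $\spectra^{\mathsf{Tree}}$.

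Next I would apply a lifting theorem for cofibrantly generated model structures along the monad $\mathbb{F}$ (as in \cite[11.3.2]{hirschhorn:2003}, or the standard variants for algebras over a monad). Most hypotheses are routine: each $\mathbb{F}A(T)$ is a \emph{finite} wedge of \emph{finite} smash products of the $A(T_u)$, so $\mathbb{F}$ preserves filtered colimits and reflexive coequalizers; hence $U$ creates these, $\mathsf{PreCooperad}$ is bicomplete, and the domains of the generating cofibrations $\mathbb{F}\mathcal{I}$ are small. The one substantive point — and the step I expect to be the main obstacle — is the \emph{acyclicity} condition: that $U$ carries every relative $\mathbb{F}\mathcal{J}$-cell complex to a weak equivalence. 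To verify this I would use the path-object argument, which is available precisely because every object of $\spectra$, hence of $\spectra^{\mathsf{Tree}}$, is fibrant (this is one of the reasons the paper works with EKMM spectra). The category $\spectra^{\mathsf{Tree}}$ has a functorial path object, for instance $A \mapsto A^{\Delta[1]_+}$ built from the pointwise simplicial cotensor, factoring the diagonal as a pointwise weak equivalence $A \to A^{\Delta[1]_+}$ followed by a pointwise fibration $A^{\Delta[1]_+} \to A^{\partial\Delta[1]_+} \isom A \times A$; and because $\mathbb{F}$ is a simplicial monad the cotensor lifts to pre-cooperads, with $U(Q^K) \isom (UQ)^K$, so each pre-cooperad $Q$ has the path object $Q^{\Delta[1]_+}$. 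This supplies exactly the functorial path objects needed to conclude that relative $\mathbb{F}\mathcal{J}$-cell complexes are pointwise weak equivalences, so the transferred (strict) model structure exists and is cofibrantly generated, with weak equivalences and fibrations detected pointwise, as claimed. (One could instead argue directly, filtering a free extension $Q \to Q \sqcup_{\mathbb{F}A} \mathbb{F}B$ along $A \to B \in \mathcal{J}$ and using that $\spectra$ is stable — so the cofibre of an acyclic cofibration is weakly contractible — together with the fact that smashing with a cofibrant $S$-module preserves all weak equivalences.)

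It then remains to record the additional structure, all of which is inherited. Right properness is automatic, since pullbacks, fibrations and weak equivalences in $\mathsf{PreCooperad}$ are all computed or detected at each $T$, where $\spectra$ is right proper (indeed every object is fibrant). The enrichment and cotensoring over $\sset$ come from $\mathbb{F}$ being a simplicial monad, with cotensors computed pointwise as above, and the tensoring is then constructed as a reflexive coequalizer of free pre-cooperads; the pushout-product axiom reduces to that of $\spectra^{\mathsf{Tree}}$ because the cotensor is detected pointwise. Finally, cellularity is inherited: the generating cofibrations $\mathbb{F}\mathcal{I}$ have compact domains and codomains and cofibrations of pre-cooperads are effective monomorphisms, by the usual bookkeeping from the combinatorial form of $\mathbb{F}$ and the cellularity of $\spectra$.
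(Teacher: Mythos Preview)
Your proposal is correct and follows essentially the same approach as the paper: the paper's proof simply says to mimic the construction of the projective model structure on operads from \cite[Appendix]{arone/ching:2009} (which in turn follows EKMM \cite[VII]{elmendorf/kriz/mandell/may:1997}), replacing $\spectra^{\mathsf{\Sigma}_+}$ by $\spectra^{\mathsf{Tree}}$ and the free-operad monad by $\mathbb{F}$, which is exactly the monadic transfer argument you outline. The only minor difference is that the EKMM/Arone--Ching treatment verifies the acyclicity condition by a direct filtration of the free extension (what they call the ``Cofibration Hypothesis'') rather than by the path-object argument you lead with, but you already mention this alternative and both methods are standard here.
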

\begin{proof}
The proof of this is virtually identical to the proof that the category of operads in $\spectra$ has a projective model structure. (See \cite[Appendix]{arone/ching:2009} which follows the approach of EKMM \cite[VII]{elmendorf/kriz/mandell/may:1997}.) Replace the category of reduced symmetric sequences $\spectra^{\mathsf{\Sigma}_+}$ with the category $\spectra^{\mathsf{Tree}}$, and replace the free operad functor $F$ with the free pre-cooperad functor $\mathbb{F}$.
\end{proof}

The following lemma is also useful.

\begin{lemma} \label{lem:strictly-cofibrant-precooperad}
Let $Q$ be a strictly-cofibrant pre-cooperad. Then $Q$ is \emph{termwise-cofibrant}, that is $Q(T)$ is a cofibrant spectrum for all $T \in \mathsf{Tree}$.
\end{lemma}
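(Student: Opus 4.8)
The plan is to reduce the statement about strictly-cofibrant pre-cooperads to the cofibration-generation setup established in the proof of Proposition~\ref{prop:strict-precooperad}. By that result, the strict model structure on $\mathsf{PreCooperad}$ is cofibrantly generated, with generating cofibrations obtained by applying the free pre-cooperad functor $\mathbb{F}$ to the generating cofibrations of $\spectra$ (smashed with finitely-presentable objects of $\spectra^{\mathsf{Tree}}$, in the EKMM style). So any strictly-cofibrant $Q$ is a retract of a cell object built from these generators. Since ``$Q(T)$ is cofibrant for all $T$'' is a property preserved under retracts, it suffices to prove it for such cell objects, and for that it suffices (by passing through the transfinite filtration and using that cofibrant spectra are closed under the relevant pushouts and transfinite compositions) to understand a single cell attachment.

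First I would record the key point: for $A \in \spectra^{\mathsf{Tree}}$ that is \emph{objectwise cofibrant}, the pre-cooperad $\mathbb{F}A$ is termwise-cofibrant. Indeed, by Definition~\ref{def:tree-monad},
\[ \mathbb{F}A(T) \isom \Wdge_{[U]\,:\,U \leq T} \Smsh_{u \in U} A(T_u), \]
a wedge of finite smash products of cofibrant spectra (the smash product being over the internal vertices $u \in U$, and each $A(T_u)$ cofibrant by hypothesis); smash products of cofibrant $S$-modules are cofibrant and wedges of cofibrant spectra are cofibrant, so $\mathbb{F}A(T)$ is cofibrant. The generating cofibrations of the strict model structure are of the form $\mathbb{F}(i)$ for $i$ a generating cofibration of $\spectra^{\mathsf{Tree}}$, which termwise is a generating cofibration of $\spectra$; such $i$ has objectwise-cofibrant source and target, so both $\mathbb{F}$ of the source and $\mathbb{F}$ of the target are termwise-cofibrant by the computation just given, and moreover $\mathbb{F}(i)$ is termwise a cofibration of spectra. (This last point uses that $\mathbb{F}$ applied termwise is, up to the wedge decomposition above, a wedge of smash products of the cofibration $i$ with cofibrant objects, hence a cofibration.)

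Next I would run the cell-attachment argument. Suppose $Q' $ is obtained from a termwise-cofibrant pre-cooperad $Q$ by a pushout
\[ \begin{diagram}
  \node{\mathbb{F}A} \arrow{e} \arrow{s,l}{\mathbb{F}(i)} \node{Q} \arrow{s} \\
  \node{\mathbb{F}B} \arrow{e} \node{Q'}
\end{diagram} \]
along a generating cofibration $\mathbb{F}(i): \mathbb{F}A \to \mathbb{F}B$. The main obstacle — and the only real content — is to describe $Q'(T)$ as a spectrum in a way that exhibits it as cofibrant given that $Q(T)$ is cofibrant. This is exactly the kind of ``free algebra pushout'' analysis carried out for operads in \cite[Appendix]{arone/ching:2009} following \cite[VII]{elmendorf/kriz/mandell/may:1997}: one filters $Q'(T)$ by the number of ``new'' cells used, and identifies the associated graded pieces as wedges of smash products involving copies of $Q(T_u)$ (cofibrant by hypothesis) and copies of the cofibration $i$ and its cofiber (cofibrant), with the successive layers attached by cofibrations of spectra. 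The monad $\mathbb{F}$ is built from wedges and smash products exactly as the free operad monad is, so the filtration argument transports verbatim; the conclusion is that $Q(T) \to Q'(T)$ is a cofibration of spectra with cofibrant source, hence $Q'(T)$ is cofibrant. Transfinite composition of cofibrations of spectra preserves cofibrancy (starting from the initial pre-cooperad, whose value at each $T$ is a point, which is cofibrant), so every cell pre-cooperad is termwise-cofibrant, and then so is every strictly-cofibrant $Q$ by the retract argument. \qed

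(Strictly speaking one should note that the initial object of $\mathsf{PreCooperad}$ is $\mathbb{F}$ of the initial object of $\spectra^{\mathsf{Tree}}$, which sends every $T$ to $*$; this is termwise-cofibrant, giving the base case of the induction.)
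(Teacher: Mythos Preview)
Your proposal is correct and takes essentially the same approach as the paper: both rely on the cell-attachment/filtration analysis (the ``Cofibration Hypothesis'') from \cite[Appendix]{arone/ching:2009}, adapted from EKMM, to show that pushouts along generating cofibrations preserve termwise-cofibrancy. The paper's proof is a two-line reference to \cite[9.14, A.11]{arone/ching:2009}; you have simply unpacked that reference, so there is no genuine difference in method.
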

\begin{proof}
This is the analogue of \cite[9.14]{arone/ching:2009}. The proof relies of the analogue of the `Cofibration Hypothesis' for pre-cooperads. (See \cite[A.11]{arone/ching:2009}.)
\end{proof}

The model structure on pre-cooperads that we are really interested in is a right Bousfield localization of the strict model structure, in the sense of Hirschhorn \cite[3.3.1]{hirschhorn:2003}. Its weak equivalences are detected by the cobar construction.

\begin{definition}[$C$-equivalences] \label{def:weq-precooperad}
A morphism $Q \to Q'$ of pre-cooperads is a \emph{$C$-equivalence} if it induces a weak equivalence $CQ \to CQ'$ of operads.
\end{definition}

\begin{prop} \label{prop:model-precooperad}
The $C$-equivalences of Definition \ref{def:weq-precooperad} and strict fibrations of Proposition \ref{prop:strict-precooperad} determine a right proper cellular simplicial model structure on the category $\mathsf{PreCooperad}$. We refer to this as the \emph{$C$-model structure}.
\end{prop}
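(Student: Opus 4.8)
The plan is to obtain the $C$-model structure as the right Bousfield localization, in the sense of Hirschhorn \cite[3.3.1]{hirschhorn:2003}, of the strict model structure of Proposition \ref{prop:strict-precooperad} with respect to a carefully chosen \emph{set} $K$ of objects of $\mathsf{PreCooperad}$. Since the strict structure is right proper, cellular and simplicial, Hirschhorn's cellularization theorem \cite[5.1.1]{hirschhorn:2003} then supplies a right proper simplicial model structure whose fibrations are exactly the strict fibrations and whose weak equivalences are the $K$-colocal equivalences. So the proof reduces to producing $K$ for which the $K$-colocal equivalences coincide with the $C$-equivalences of Definition \ref{def:weq-precooperad}, together with a check that the localization is cellular.

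First we record that $C$ is a right Quillen functor from $\mathsf{PreCooperad}$ (with the strict structure) to $\mathsf{Operad}$. Indeed, if $Q \to Q'$ is a strict fibration (resp.\ trivial strict fibration), then $Q(-) \to Q'(-)$ is an objectwise, hence Reedy, fibration (resp.\ trivial fibration) of $\mathsf{T}(I)$-diagrams of spectra --- in $\mathsf{T}(I)$ every non-identity map raises degree, so matching objects are trivial and every diagram is Reedy fibrant. Since $\bar{w}(-)$ is a Reedy cofibrant $\mathsf{T}(I)$-diagram of simplicial sets (proof of Proposition \ref{prop:bar-cobar-homotopy}), the weighted limit $CQ(I) = \Map_{T \in \mathsf{T}(I)}(\bar{w}(T),Q(T)) \to CQ'(I)$ is a fibration (resp.\ trivial fibration) of spectra. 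Composing $C$ with the right Quillen evaluation functors $\mathrm{ev}_I \colon \mathsf{Operad} \to \spectra$ for $|I| \geq 2$, we obtain right Quillen functors $Q \mapsto CQ(I)$ with (simplicial) left adjoints $L_I \colon \spectra \to \mathsf{PreCooperad}$; these exist by the adjoint functor theorem (and are the restrictions of the functor $\mathbb{B}$ of the next section). For each $n \in \Z$ fix a cofibrant spectrum $E_n$ weakly equivalent to the $n$-sphere, and put
\[ K := \{\, L_I(E_n) \ : \ n \in \Z, \ I = \{1,\dots,m\}, \ m \geq 2 \,\}, \]
a set whose members are cofibrant pre-cooperads because each $L_I$ is left Quillen.

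Every pre-cooperad $Q$ is strictly fibrant, since each $Q(T)$ is a fibrant spectrum, so the homotopy function complex of $Q$ under an object of $K$ is just the enriched mapping space. By the simplicial adjunction $L_I \dashv (\mathrm{ev}_I \comp C)$ we therefore have $\mathrm{map}(L_I(E_n),Q) \isom \mathrm{map}_{\spectra}(E_n, CQ(I))$, naturally in $Q$, and the right-hand side has $\pi_{k+n}(CQ(I))$ as its homotopy group in degree $k$. Letting $n$ range over $\Z$ and $I$ over all finite sets with $|I| \geq 2$, it follows that a morphism $Q \to Q'$ induces a weak equivalence on all of these mapping spaces if and only if $\pi_*(CQ(I)) \to \pi_*(CQ'(I))$ is an isomorphism for every such $I$, that is, if and only if $Q \to Q'$ is a $C$-equivalence. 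Hence the $K$-colocal equivalences are exactly the $C$-equivalences, and Hirschhorn's theorem produces the model structure, which is right proper and simplicial.

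The step I expect to be the main obstacle is verifying that this localized structure is \emph{cellular}, since right Bousfield localizations need not be cellular in general. Here the key point is that $K$ consists of cofibrant pre-cooperads, so the localization is cofibrantly generated, with generating cofibrations and generating trivial cofibrations built from those of the strict structure of Proposition \ref{prop:strict-precooperad} together with the tensors of the objects of $K$ by the generating cofibrations of $\sset$; the smallness, compactness and effective-monomorphism hypotheses defining cellularity are then inherited from the strict structure by way of the analogue of the Cofibration Hypothesis used in the proof of Lemma \ref{lem:strictly-cofibrant-precooperad}. Together with the right-properness and simplicial enrichment already obtained, this yields the right proper cellular simplicial $C$-model structure.
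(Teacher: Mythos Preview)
Your proof is correct and follows essentially the same strategy as the paper: apply Hirschhorn's right Bousfield localization theorem \cite[5.1.1]{hirschhorn:2003} to the strict model structure with respect to a set $K$ of pre-cooperads chosen so that the $K$-colocal equivalences are exactly the $C$-equivalences.

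The only real difference is in the construction of $K$. You produce the objects of $K$ abstractly, as $L_I(E_n)$ where $L_I$ is the left adjoint to $Q \mapsto CQ(I)$, appealing to the adjoint functor theorem (and forward-referencing $\mathbb{B}$). The paper instead builds $K$ explicitly from the free pre-cooperad monad already available: it sets $J_n(T) := \bar{w}(T) \smsh (\Sigma_I)_+ \smsh S^n_c$ and takes $K = \{\mathbb{F}J_n : n \in \Z\}$, then computes directly that
\[ \Hom_{\mathsf{PreCooperad}}(\mathbb{F}J_n,Q) \isom \prod_{k \geq 1} \Hom(S^n_c, CQ(k)). \]
This explicit description avoids both the appeal to the adjoint functor theorem and the forward reference, and makes the cofibrancy of the objects of $K$ transparent. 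Your version is of course equivalent, since your $L_I(E_n)$ corepresents $\Hom(E_n, CQ(I))$, which detects the same information term by term. Your added paragraph on cellularity goes beyond what the paper records; the paper simply cites Hirschhorn's theorem and does not separately argue that the localized structure is cellular.
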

\begin{proof}
This is an application of the localization machinery of Hirschhorn. See Theorem 5.1.1 of \cite{hirschhorn:2003}. It is sufficient to show that the class of $C$-equivalences of pre-cooperads is equal to the class of $K$-colocal equivalences for some set $K$ of pre-cooperads.

For $n \in \mathbb{Z}$, define $J_n: \mathsf{Tree} \to \spectra$ as follows. For an $I$-tree $T$, let
\[ J_n(T) := \bar{w}(T) \smsh (\Sigma_I)_+ \smsh S^{n}_c \]
where $\Sigma_I$ is the symmetric group on the set $I$, and $S^{n}_c$ is a cofibrant model for the $n$-sphere spectrum in $\spectra$. A morphism $\sigma: T \to T'$ in $\mathsf{Tree}$ determines a map $\bar{w}(T) = \bar{w}(\sigma_*T) \to \bar{w}(T')$ and the bijection $\sigma: I \to I'$ determines a map $\Sigma_I \to \Sigma_{I'}$. Combining these, we get the necessary map $J_n(T) \to J_n(T')$. We take $K$ to be the set of free pre-cooperads $\{\mathbb{F}J_n \; | \; n \in \mathbb{Z} \}$.

For a pre-cooperad $Q$, we have
\[ \begin{split}
    \Hom_{\mathsf{PreCooperad}}(\mathbb{F}J_n,Q) &\isom \Hom_{\mathsf{Tree}}(J_n,Q) \\
        &\isom \prod_{k = 1}^{\infty} \Hom_{T \in \mathsf{T}(k)}(\bar{w}(T) \smsh S^{n}_c,Q(T)) \\
        &\isom \prod_{k = 1}^{\infty} \Hom(S^n_c,CQ(k))
\end{split} \]
A morphism of pre-cooperads $Q \to Q'$ is therefore a $K$-colocal equivalence if and only if the maps
\[ \Hom(S^n_c,CQ(k)) \to \Hom(S^n_c,CQ'(k)) \]
are weak equivalences of simplicial sets for all $k \geq 1, n \in \mathbb{Z}$. This is the case if and only if $CQ(k) \to CQ'(k)$ is a weak equivalence of spectra for all $k \geq 1$, that is, if and only if $Q \to Q'$ is a $C$-equivalence.
\end{proof}

\begin{definition}[Left adjoint to the cobar construction] \label{def:left-adjoint}
Given $I$-trees $T,U$, we define a pointed simplicial set $\bar{w}(T;U)$ by:
\[ \bar{w}(T;U) := \begin{cases}
  \Smsh_{u \in U} \bar{w}(T_u) & \text{if $U \leq T$}; \\
  \quad * & \text{otherwise}.
\end{cases} \]
For $U \leq U'$, we have
\[ \bar{w}(T;U) \to \bar{w}(T;U') \]
given, if $U' \leq T$, by smashing together, over $u \in U$, the maps
\[  \nu: \bar{w}(T_u) \to \Smsh_{u' \in (U')_u} \bar{w}(T_{u'}) \]
of Definition \ref{def:bar}.

If $T \leq T'$, we have
\[ \bar{w}(T;U) \to \bar{w}(T';U) \]
given, if $U \leq T$, by smashing together, over $u \in U$, the maps
\[ \iota: \bar{w}(T_u) \to \bar{w}(T'_u). \]
These maps make $\bar{w}(-;-)$ into a functor $\mathsf{T}(I) \times \mathsf{T}(I) \to \sset$.

We also have isomorphisms
\[ \mu_{T;U,T';U'}: \bar{w}(T;U) \smsh \bar{w}(T';U') \arrow{e,t}{\isom} \bar{w}(T \cup_i T'; U \cup_i U'). \]

Now let $P$ be a reduced operad. We define a pre-cooperad $\mathbb{B}P$ by
\[ \mathbb{B}P(T) := \bar{w}(T;U)^{\mathsf{rev}} \smsh_{U \in \mathsf{T}(I)} P(U). \]
The pre-cooperad structure maps come from combining the isomorphisms $\mu_{T;U,T';U'}$ above with the isomorphisms $P(U) \smsh P(U') \arrow{e,t}{\isom} P(U \cup_i U')$. These constructions determine a functor
\[ \mathbb{B}: \mathsf{Operad} \to \mathsf{PreCooperad}. \]
\end{definition}

\begin{lemma} \label{lem:adjunction}
The functor $\mathbb{B}: \mathsf{Operad} \to \mathsf{PreCooperad}$ is left adjoint to the cobar construction $C: \mathsf{PreCooperad} \to \mathsf{Operad}$, and $(\mathbb{B},C)$ is a Quillen adjunction between the projective model structure on $\mathsf{Operad}$ and the $C$-model structure of Proposition \ref{prop:model-precooperad} on $\mathsf{PreCooperad}$.
\end{lemma}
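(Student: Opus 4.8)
The plan is to establish the adjunction $(\mathbb{B}, C)$ first, and then verify the Quillen conditions. For the adjunction, I would construct the unit and counit maps directly, or better, exhibit a natural isomorphism
\[ \Hom_{\mathsf{PreCooperad}}(\mathbb{B}P, Q) \isom \Hom_{\mathsf{Operad}}(P, CQ). \]
A morphism $\mathbb{B}P \to Q$ of pre-cooperads consists of compatible maps $\mathbb{B}P(T) = \bar{w}(T;U)^{\mathsf{rev}} \smsh_{U} P(U) \to Q(T)$; by the coend/hom adjunction this corresponds to a family of maps $P(U) \to \Map(\bar{w}(T;U)^{\mathsf{rev}}, Q(T))$ natural in both $T$ and $U$, compatible with the pre-cooperad and operad structures. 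Using the decomposition $\bar{w}(T;U) = \Smsh_{u \in U} \bar{w}(T_u)$ when $U \leq T$, together with the structure maps $m_{T,i,U}$ of $Q$, I would show that such a compatible family is determined by its `diagonal' data at $U = T$, which reassembles precisely into a map of symmetric sequences $P(I) \to CQ(I) = \Map_{T \in \mathsf{T}(I)}(\bar{w}(T), Q(T))$ compatible with operad composition. Here I would lean on the fact (analogous to Remark \ref{rem:free-precooperad} and the structure of $\mathbb{B}$) that $\mathbb{B}P$ is built from the data of $P(U)$ `spread out' over all $T \geq U$, so maps out of it are pinned down by the $T = U$ components. This bookkeeping with coends, ends, and the grafting isomorphisms is the most labor-intensive part, though it is essentially formal.

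Having the adjunction, the Quillen condition amounts to showing that $\mathbb{B}$ preserves cofibrations and trivial cofibrations, or equivalently that $C$ preserves fibrations and trivial fibrations. The latter formulation is cleanest: fibrations in the $C$-model structure on $\mathsf{PreCooperad}$ are the strict fibrations (Proposition \ref{prop:model-precooperad}), i.e. termwise fibrations; and in $\spectra$ every object is fibrant, so every strict fibration is already, in particular, controlled termwise. Meanwhile fibrations in the projective model structure on $\mathsf{Operad}$ are termwise fibrations, hence (again since all spectra are fibrant) every map of operads is a fibration, so the condition `$C$ preserves fibrations' is automatic. Thus the only real content is that $C$ preserves trivial fibrations, equivalently acyclic strict fibrations. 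Since strict weak equivalences and strict fibrations are both defined termwise, an acyclic strict fibration $Q \to Q'$ is a termwise acyclic fibration, and I would invoke Proposition \ref{prop:bar-cobar-homotopy}'s proof technique: $\bar{w}(-)$ is a Reedy cofibrant $\mathsf{T}(I)$-diagram of simplicial sets, so by \cite[18.4.13]{hirschhorn:2003} the induced map on ends $CQ(I) = \Map_{T}(\bar{w}(T), Q(T)) \to \Map_{T}(\bar{w}(T), Q'(T)) = CQ'(I)$ is a weak equivalence; and since $\Map$ of a cofibration (here $0 \to \bar{w}(T)$ is automatically one) into an acyclic fibration is an acyclic fibration, $CQ \to CQ'$ is a termwise acyclic fibration of operads, hence a trivial fibration in the projective model structure.

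Alternatively, I would show directly that $\mathbb{B}$ sends the generating (trivial) cofibrations of $\mathsf{Operad}$ to (trivial) cofibrations of $\mathsf{PreCooperad}$; since $\mathbb{B}$ is a left adjoint it preserves colimits, so it suffices to check this on generators. The generating cofibrations of the projective model structure on operads are of the form $F(\text{generating (trivial) cofibration of reduced symmetric sequences})$, where $F$ is the free operad functor; one computes $\mathbb{B}F(-)$ and checks it is a free pre-cooperad on the corresponding object of $\spectra^{\mathsf{Tree}}$ up to the relevant cofibration, using the smashing-over-vertices structure of both $\mathbb{B}$ and $\mathbb{F}$ together with the Reedy cofibrancy of $\bar{w}(-;-)$. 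I would most likely present the `$C$ preserves fibrations and trivial fibrations' argument as the primary one since it is shorter and reuses Proposition \ref{prop:bar-cobar-homotopy} almost verbatim.

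The main obstacle I anticipate is the verification of the adjunction itself --- specifically, checking that the correspondence between compatible families over $\mathsf{T}(I) \times \mathsf{T}(I)$ and maps of symmetric sequences $P \to CQ$ respects the operad composition on one side and the pre-cooperad structure maps $m_{T,i,U}$ on the other. The grafting isomorphisms $\mu_{T;U,T';U'}$ and the compatibility of the reversal operation $(-)^{\mathsf{rev}}$ with smash products (via the maps $d$, $d^*$ of Remark \ref{rem:axiom}) have to be threaded through carefully, and one must confirm the two $d^*$-type maps used in defining $C$ on pre-cooperads and in defining $\mathbb{B}$ are mutually adjoint in the right sense. Once that is in place, the Quillen part is essentially immediate from the earlier results.
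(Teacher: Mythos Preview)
Your adjunction argument is essentially the paper's, modulo one slip: the data that pins down a map $\mathbb{B}P \to Q$ is not the ``diagonal'' component $U = T$ but the component $U = \tau_I$. Since $\bar{w}(T;\tau_I) = \bar{w}(T)$, restricting to $U = \tau_I$ gives exactly the maps $P(I) \to \Map(\bar{w}(T)^{\mathsf{rev}}, Q(T))$ assembling into $P(I) \to CQ(I)$. The converse direction --- reconstructing $\phi_{T,U}$ from $\phi_{T}$ by smashing over the vertices of $U$ and composing with the pre-cooperad structure maps of $Q$ --- is precisely what the paper does, and your account of the bookkeeping (naturality in $T$ and $U$, compatibility with grafting, the role of $d^*$) is accurate.

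The Quillen argument, however, has a genuine error. The claim that ``since all spectra are fibrant, every map of operads is a fibration'' is false: fibrancy of all objects does not make every map a fibration (for instance $* \to S$ is not a fibration in $\spectra$). So the preservation of fibrations by $C$ is \emph{not} automatic. There is a second, subtler problem in your handling of trivial fibrations: the $C$-model structure is a right Bousfield localization of the strict one, so its trivial fibrations are strict fibrations that are $C$-equivalences, and a $C$-equivalence need not be a strict (termwise) weak equivalence. Hence you cannot identify trivial fibrations with termwise acyclic fibrations as you do.

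The paper's route avoids both difficulties and is shorter than either of your proposals. First, $C$ preserves \emph{all} weak equivalences tautologically: the weak equivalences in the $C$-model structure are by definition exactly the maps that $C$ sends to weak equivalences of operads. Second, $C$ preserves fibrations by the very Reedy argument you already wrote down for trivial fibrations: a strict fibration $Q \to Q'$ is a Reedy fibration of $\mathsf{T}(I)$-diagrams (Reedy fibrations here being objectwise), $\bar{w}(-)^{\mathsf{rev}}$ is Reedy cofibrant, and \cite[18.4.11]{hirschhorn:2003} then gives that $\Map_T(\bar{w}(T)^{\mathsf{rev}}, -)$ takes Reedy fibrations to fibrations of spectra. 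These two facts together yield preservation of trivial fibrations without ever needing to describe what trivial fibrations in the localized model structure look like.
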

\begin{proof}
A map of operads $\phi: P \to CQ$ gives us maps
\[ P(I) \to \Map_{T \in \mathsf{T}(I)}(\bar{w}(T)^{\mathsf{rev}},Q(T)) \]
which are adjoint to maps, natural in $T$,
\[ \tag{*} \phi_T: \bar{w}(T)^{\mathsf{rev}} \smsh P(I) \to Q(T) \]
If $U \leq T$, we can smash together maps of the form (*) for each $T_u$ to get
\[ \phi_{T,U}: \bar{w}(T;U)^{\mathsf{rev}} \smsh P(U) \to \Smsh_{u \in U} Q(T_u) \to Q(T). \]
If $U \nleq T$, we take $\phi_{T,U}$ to be the trivial map. We now claim that the $\phi_{T,U}$ determine a map $\phi^\#_T: \mathbb{B}P(T) \to Q(T)$. To see this we must check that the following diagram commutes
\[ \begin{diagram}
  \node[2]{\bar{w}(T;U)^{\mathsf{rev}} \smsh P(U)} \arrow{se,t}{\phi_{T,U}} \\
  \node{\bar{w}(T;U)^{\mathsf{rev}} \smsh P(U')} \arrow{ne} \arrow{se} \node[2]{Q(T)} \\
  \node[2]{\bar{w}(T;U')^{\mathsf{rev}} \smsh P(U')} \arrow{ne,b}{\phi_{T,U'}}
\end{diagram} \]
when $U \leq U'$. This follows from the hypothesis that $\phi$ is a map of operads. Finally, it is easy to check that the $\phi^\#_T$ form a map of pre-cooperads $\phi^\#: \mathbb{B}P \to Q$.

Conversely, a map of pre-cooperads $\psi: \mathbb{B}P \to Q$ determines maps
\[ \psi_{T,\tau_I}: \bar{w}(T,\tau_I)^{\mathsf{rev}} \smsh P(I) \to Q(T) \]
and hence, since $\bar{w}(T,\tau_I) = \bar{w}(T)$, maps
\[ \psi^\#_T: P(I) \to \Map(\bar{w}(T)^{\mathsf{rev}},Q(T)). \]
These combine to form maps
\[ \psi^\#_I: P(I) \to CQ(I) \]
which make up a map of operads $\psi^\#: P \to CQ$.

The cobar construction $C$ preserves all weak equivalences by definition of the $C$-model structure. If $Q \to Q'$ is a fibration of pre-cooperads, then in particular, $Q(-) \to Q'(-)$ is a Reedy fibration of $\mathsf{T}(I)$-indexed diagrams of spectra, for each $I$ (since these too are determined termwise, see the proof of Proposition \ref{prop:bar-cobar-homotopy}). In \ref{prop:bar-cobar-homotopy} we also saw that $\bar{w}(-)^{\mathsf{rev}}$ is a Reedy cofibrant diagram. It follows by \cite[18.4.11]{hirschhorn:2003} that $\Map_{T}(\bar{w}(T)^{\mathsf{rev}},-)$ takes Reedy fibrations to fibrations of spectra. Hence $C$ also preserves fibrations and hence trivial fibrations. Thus $(\mathbb{B},C)$ is a Quillen pair.
\end{proof}

\begin{remark} \label{rem:cobar-adjoints}
In the context of operads of chain complexes over a commutative ring $R$, Getzler and Jones \cite{getzler/jones:1994} show that the cobar and bar constructions form an adjunction between categories of operads and cooperads in which the cobar construction $C$ is the \emph{left} adjoint. The constructions of this section can be applied to the algebraic case and so $C$ is also a right adjoint as a functor from pre-cooperads to operads. Note that limits and colimits of cooperads are very different depending on whether they are calculated in the category of pre-cooperads or cooperads.
\end{remark}

\begin{definition}[The co-$W$-construction] \label{def:co-$W$-construction}
In order to show that the pair $(\mathbb{B},C)$ is a Quillen \emph{equivalence} we need a version of the $W$-construction for pre-cooperads.

Let $T \leq U$ be $I$-trees. We then write
\[ \Delta[U;T] := \prod_{\mathsf{edge}(U) - \mathsf{edge}(T)} \Delta[1]. \]
This is a product of copies of the reversed simplicial interval indexed by those edges of $U$ that are contracted in $T$. If $T \leq U \leq U'$, we have a map of simplicial sets
\[ i_{U,U'}: \Delta[U;T] \to \Delta[U';T] \]
given by assigning value $0$ to the edges of $U'$ that are contracted in $U$. Alternatively, if $T' \leq T \leq U$, we have a map
\[ j_{T,T'}: \Delta[U;T] \to \Delta[U;T'] \]
given by assigning value $0$ to the edges of $U$ that are contracted in $T'$ but not $T$. Both of these maps are the inclusions of faces in a simplicial cube. We also have relabelling isomorphisms
\[ \sigma_\#: \Delta[U;T] \arrow{e,t}{\isom} \Delta[\sigma_*U,\sigma_*T] \]
for a bijection $\sigma: I \arrow{e,t}{\isom} I'$ and a grafting map
\[ \mu_i: \Delta[U \cup_i U'; T \cup_i T'] \arrow{e,t}{\isom} \Delta[U;T] \times \Delta[U';T']. \]
Both the $\sigma_\#$ and $\mu_i$ are natural with respect to the $i_{U,U'}$ and $j_{T,T'}$.

Let $Q$ be a pre-cooperad. We define the \emph{co-$W$-construction on $Q$} to be the pre-cooperad $W^cQ$ given on $T \in \mathsf{T}(I)$ by
\[ W^cQ(T) := \Map_{T \leq U}(\Delta[U;T]^{\mathsf{rev}}_+,Q(U)). \]
The object $W^cQ(T)$ is an `end' for diagrams indexed by the subcategory of $\mathsf{T}(I)$ consisting of trees $U$ with $T \leq U$. The maps $j_{U,U'}$ above make $\Delta[-;T]^{\mathsf{rev}}_+$ into such a diagram of simplicial sets and the pre-cooperad $Q$ restricts to such a diagram of spectra.

The maps $j_{T,T'}$ and $\sigma_\#$ above determine maps $W^cQ(T') \to W^cQ(T)$ that make $W^cQ$ into a functor $\mathsf{Tree} \to \spectra$. Combining the maps $\mu_i$ above with the structure maps for the pre-cooperad $Q$, we obtain maps
\[ W^cQ(T) \smsh W^cQ(T') \to W^cQ(T \cup_i T') \]
that make $W^cQ$ into a pre-cooperad.
\end{definition}

\begin{definition}[The co-$W$-resolution map] \label{def:co-W-map}
We construct a natural map of pre-cooperads
\[ \eta^*: Q \to W^cQ. \]
For $T \leq U$, we have maps
\[ \Delta[U;T]^{\mathsf{rev}}_+ \smsh Q(T) \to Q(U) \]
given by contracting the cube $\Delta[U;T]$ to a point and combining with the map $Q(T) \to Q(U)$. These are natural in $T$ and $U$. Their adjoints
\[ Q(T) \to \Map(\Delta[U;T]^{\mathsf{rev}}_+,Q(U)) \]
combine to form a map
\[ \eta^*_T: Q(T) \to W^cQ(T) \]
and these make up a map of pre-cooperads $Q \to W^cQ$.
\end{definition}

\begin{lemma} \label{lem:Q-WQ}
The map $\eta^*: Q \to W^cQ$ of Definition \ref{def:co-W-map} is a strict weak equivalence of pre-cooperads.
\end{lemma}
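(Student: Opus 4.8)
The plan is to dualize the argument of Lemma~\ref{lem:WP-P}. Since strict weak equivalences of pre-cooperads are detected termwise, it suffices to show that $\eta^*_T\colon Q(T) \to W^cQ(T)$ is a weak equivalence of spectra for every $T \in \mathsf{Tree}$, and in fact I would produce an explicit simplicial homotopy inverse, working in the category of spectra (or of symmetric sequences) rather than of pre-cooperads.

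First I would define a map of symmetric sequences $\zeta^*\colon W^cQ \to Q$ — not a map of pre-cooperads in general — by taking $\zeta^*_T\colon W^cQ(T) \to Q(T)$ to be the projection of the end
\[ W^cQ(T) = \Map_{T \leq U}(\Delta[U;T]^{\mathsf{rev}}_+,Q(U)) \]
onto the factor indexed by $U = T$. Here $\Delta[T;T]$ is an empty product of intervals, hence a point, so that factor is $\Map(S^0,Q(T)) \isom Q(T)$. By construction the $U = T$ component of $\eta^*_T$ is adjoint to the collapse $S^0 \to S^0$ composed with the map $Q(T) \to Q(T)$ induced by $\mathrm{id}_T$, i.e. the identity; hence $\zeta^*_T \eta^*_T = \mathrm{id}_{Q(T)}$.

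The remaining task is to build a simplicial homotopy between $\mathrm{id}_{W^cQ(T)}$ and $\eta^*_T \zeta^*_T$. For each $U$ with $T \leq U$ there is a simplicial contraction $H_U\colon \Delta[U;T]^{\mathsf{rev}} \times \Delta[1] \to \Delta[U;T]^{\mathsf{rev}}$ from the identity to the constant map at the vertex $(0,\dots,0)$, that is, the image of $\Delta[T;T]^{\mathsf{rev}}$ under the face inclusion $i_{T,U}$; concretely this is the coordinatewise contraction of the cube to that vertex, transported through the reversal. These contractions can be chosen naturally in $U$ — compatibly with the face maps $i_{U,U'}$ — and compatibly with the grafting isomorphisms $\mu_i$. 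Precomposition with $H_U$ then induces a simplicial homotopy $W^cQ(T) \smsh \Delta[1]_+ \to W^cQ(T)$; naturality in $U$ is exactly what guarantees that it respects the equalizer defining the end, so it really takes values in $W^cQ(T)$. At one end of the homotopy we recover the identity; at the other end, the compatibility condition for a point $(x_U)$ of the end identifies the restriction of $x_U$ to the vertex $(0,\dots,0)$ with the image of $\zeta^*_T(x)$ under $Q(T) \to Q(U)$, so the resulting family is precisely $\eta^*_T \zeta^*_T(x)$. Thus $\eta^*_T$ and $\zeta^*_T$ are simplicial homotopy inverse, so each $\eta^*_T$ is a weak equivalence, and therefore $\eta^*$ is a strict weak equivalence of pre-cooperads.

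As in Lemma~\ref{lem:WP-P}, the main obstacle is not conceptual but organizational: one must check that the contracting homotopies $H_U$ can be chosen naturally in $U$ and are compatible with the reversal $(-)^{\mathsf{rev}}$, so that precomposition descends to the end $W^cQ(T)$ rather than living only on the individual factors. This is routine bookkeeping with faces of simplicial cubes, but it requires care about the orientation of the intervals — which is precisely the reason the reversals were built into the definitions of $\bar{w}(-)$, $C$, and $W^c$.
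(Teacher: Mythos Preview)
Your proposal is correct and follows essentially the same argument as the paper: define $\zeta^*_T$ by projecting onto the $U=T$ factor, check $\zeta^*_T\eta^*_T=\mathrm{id}$, and use natural contractions of the cubes $\Delta[U;T]$ to the $(0,\dots,0)$ vertex to build a simplicial homotopy from $\eta^*_T\zeta^*_T$ to the identity. One small terminological slip: $\zeta^*$ is a map of $\mathsf{Tree}$-diagrams (objects of $\spectra^{\mathsf{Tree}}$), not of symmetric sequences, since pre-cooperads are indexed by trees rather than finite sets; the paper phrases this as $\zeta^*$ being natural in $T$ but not a map of pre-cooperads.
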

\begin{proof}
We construct a simplicial homotopy inverse to $\eta^*_T$ from maps
\[ \zeta^*_T: W^cQ(T) \to Q(T). \]
These are given by projecting from $W^cQ(T)$ onto the term $U = T$ and using the isomorphism
\[ \Map(\Delta[T;T]^{\mathsf{rev}}_+,Q(T)) \arrow{e,t}{\isom} Q(T). \]
The composite
\[ Q(T) \arrow{e,t}{\eta^*_T} W^cQ(T) \arrow{e,t}{\zeta^*_T} Q(T) \]
is the identity, and the composite
\[ W^cQ(T) \arrow{e,t}{\zeta^*_T} Q(T) \arrow{e,t}{\eta^*_T} W^cQ(T) \]
is simplicially homotopic to the identity. The homotopy is made from deformation retractions of $\Delta[U;T]$ onto the point where all edges have value $0$. These retractions can be chosen to be natural in $U$. It follows that $\eta^*_T:Q(T) \to W^cQ(T)$ is a weak equivalence for all $T$ as required.

Notice that the maps $\zeta^*_T$ are natural in $T$, that is, we have an objectwise weak equivalence $\zeta^*: W^cQ \to Q$ in $\spectra^{\mathsf{Tree}}$. However, $\zeta^*$ is not in general a map of pre-cooperads.
\end{proof}

We now relate the co-$W$-construction to the bar-cobar construction by constructing a map of pre-cooperads
\[ \theta^*: BCQ \to W^cQ. \]
This is more-or-less dual to the construction of the map $\theta: WP \to CBP$ in Definition \ref{def:W-CB}.

\begin{definition}[The map of pre-cooperads $BCQ \to W^cQ$] \label{def:BC-W}
For an $I$-tree $T$ and any operad $P$ we have
\[ \begin{split} BP(T) &= \Smsh_{t \in T} \bar{w}(U_t) \smsh_{U_t \in \mathsf{T}(I_t)} P(U_t) \\ &\isom \left[\Smsh_{t \in T} \bar{w}(U_t)\right] \smsh_{T \leq U} P(U). \end{split} \]
In particular, this gives
\[ BCQ(T) \isom \left[\Smsh_{t \in T} \bar{w}(U_t)\right] \smsh_{T \leq U} \Smsh_{u \in U} \Map_{V_u \in \mathsf{T}(I_u)}(\bar{w}(V_u^{\mathsf{rev}}),Q(V_u)) \]
which has a natural map, for each $V \geq U$ to
\[ \left[\Smsh_{t \in T} \bar{w}(U_t)\right] \smsh_{T \leq U} \Map \left(\Smsh_{u \in U}\bar{w}(V_u)^{\mathsf{rev}},Q(V) \right). \]
To define a map from here to
\[ W^cQ(T) = \Map_{T \leq V}(\Delta[V;T]^{\mathsf{rev}}_+,Q(V)) \]
it is sufficient to give, for each $U$ with $T \leq U \leq V$, a map of simplicial sets
\[ \theta^*_{T,U,V}: \Delta[V;T]^{\mathsf{rev}}_+ \smsh \Smsh_{t \in T} \bar{w}(U_t) \to \Smsh_{u \in U} \bar{w}(V_u)^{\mathsf{rev}}. \]
We get such maps by smashing together, over $t \in T$, the maps
\[ \theta_{V_t,U_t}^{\mathsf{rev}}: \Delta[V_t]^{\mathsf{rev}}_+ \smsh \bar{w}(U_t) \to \Smsh_{u \in U_t} \bar{w}(V_u)^{\mathsf{rev}} \]
of Definition \ref{def:W-CB} and noticing that
\[ \Delta[V;T] \isom \prod_{t \in T} \Delta[V_t]. \]

The necessary checks here are essentially the same as those used in the construction of the map $\theta: WP \to CBP$ in Definition \ref{def:W-CB}. They amount to checking that the maps $\theta^*_{T,U,V}$ are natural in $T,U,V$, and respect the grafting maps
\[ \Delta[V;T] \times \Delta[V';T'] \arrow{e,t}{\isom} \Delta[V \cup_i V';T \cup_i T'], \]
and decomposition maps
\[ \bar{w}(U \cup_i U') \to \bar{w}(U) \smsh \bar{w}(U') \]
and
\[ \bar{w}(V \cup_i V') \to \bar{w}(V) \smsh \bar{w}(V'). \]
Together these ensure that we have a well-defined map of pre-cooperads
\[ \theta^*: BCQ \to W^cQ. \]
\end{definition}

\begin{lemma} \label{lem:BC-W-weq}
For a pre-cooperad $Q$, there is a natural $C$-equivalence of pre-cooperads
\[ \tilde{\theta}^*: B\tilde{C}Q \weq W^cQ. \]
\end{lemma}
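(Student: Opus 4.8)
The plan is to deduce this lemma from the already-established comparison $\theta: WP \to CBP$ (Theorem \ref{thm:W-CB}) by a formal "adjoint transpose" argument, together with the strict weak equivalence $\eta^*: Q \to W^cQ$ of Lemma \ref{lem:Q-WQ}. The first observation is that the map $\theta^*: BCQ \to W^cQ$ of Definition \ref{def:BC-W} is, by construction, assembled from the reversed maps $\theta_{V_t,U_t}^{\mathsf{rev}}$, so applying the cobar construction $C$ to $\theta^*$ should recover (up to the identifications $C\mathbb{B}P \isom$ something built from $CBP$, and using that $C$ turns the co-$W$-construction into a $W$-type construction on operads) a map closely related to $\theta: W(CQ) \to CB(CQ)$. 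More precisely, I would first check that there is a commuting triangle
\[ \begin{diagram}
  \node{C(W^cQ)} \arrow{se} \\
  \node{W(CQ)} \arrow{n} \arrow{e} \node{CB(CQ) = CBCQ}
\end{diagram} \]
in which the vertical map $W(CQ) \to C(W^cQ)$ is adjoint to a natural transformation relating the two "W-type" constructions, the diagonal is $\theta$ for the operad $CQ$, and the horizontal-ish map is $C$ applied to (a replacement of) $\theta^*$.

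With that in place, the strategy splits into two parts. First, replace $C$ by $\tilde{C}$ (a termwise-cofibrant replacement) throughout: since $B$ only preserves weak equivalences between termwise-cofibrant operads (Proposition \ref{prop:bar-cobar-homotopy}), one needs $B\tilde{C}Q$ rather than $BCQ$, which is exactly why the lemma is stated with $\tilde{\theta}^*: B\tilde{C}Q \to W^cQ$; the map $\tilde\theta^*$ is obtained by precomposing $\theta^*$ with $B$ applied to the replacement $\tilde{C}Q \weq CQ$, which is a weak equivalence of cooperads by Proposition \ref{prop:bar-cobar-homotopy}, hence a $C$-equivalence of pre-cooperads. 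So it suffices to show $\theta^*: BCQ \to W^cQ$ becomes a weak equivalence after applying $C$, i.e. that $C\theta^*$ is a weak equivalence of operads. Second, to see that $C\theta^*$ is a weak equivalence, I would use the commuting triangle above: $C$ applied to $\eta^*: Q \to W^cQ$ composed with $C\theta^*$, suitably identified, factors through $\theta: W(CQ) \to CB(CQ)$ for the termwise-cofibrant operad $\tilde{C}Q$, which is a weak equivalence by Theorem \ref{thm:W-CB}. Since $C\eta^*$ is a weak equivalence (it is $C$ applied to a strict, hence termwise, weak equivalence of pre-cooperads, and $C$ preserves all weak equivalences by definition of the $C$-model structure, or by Proposition \ref{prop:bar-cobar-homotopy}), and the vertical comparison map $W(CQ) \to C(W^cQ)$ is a weak equivalence (this should follow by the same Reedy-cofibrancy/end-coend argument as in Lemma \ref{lem:Q-WQ} and Proposition \ref{prop:bar-cobar-homotopy}, since both sides are built from the contractible cubes $\Delta[-;T]$), a two-out-of-three argument forces $C\theta^*$ to be a weak equivalence, hence $\theta^*$ — and therefore $\tilde\theta^*$ — is a $C$-equivalence.

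The main obstacle I anticipate is the bookkeeping needed to produce and verify the commuting triangle relating $C(W^cQ)$, $W(CQ)$ and $CBCQ$: one must match up the end over trees $U \geq T$ defining $W^cQ(T)$ with the end over trees defining $W(CQ)$ after applying $C$, and check that $C\theta^*$ really does correspond to $\theta$ under these identifications — this is precisely the "dual" of the third (grafting-compatibility) step in the proof of Proposition \ref{prop:W-CB} and will require carefully tracking how $\Delta[V;T] \isom \prod_{t} \Delta[V_t]$ interacts with the coend/end Fubini swaps. A secondary technical point is confirming that the comparison $W(CQ) \to C(W^cQ)$ is a weak equivalence despite $C$ not commuting with the relevant (co)limits on the nose; here the contractibility of $\Delta[U;T]$ for $T < U$ together with the Reedy-cofibrancy of the cube diagrams (as in Proposition \ref{prop:bar-cobar-homotopy}) should do the job, reducing everything to the $U = T$ term just as in Lemma \ref{lem:Q-WQ}. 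Once these two formal points are settled, the rest is an application of two-out-of-three.
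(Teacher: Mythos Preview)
Your overall strategy is the same as the paper's: reduce to Theorem~\ref{thm:W-CB} applied to the operad $\tilde{C}Q$ and conclude by two-out-of-three. The ingredients you list ($\theta$, $\eta^*$, cofibrant replacement, Reedy cofibrancy of the cube diagrams) are exactly the ones used. Where your proposal diverges is in the intermediate step, and that is where the gap lies.

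You propose an independently defined comparison map $W(CQ) \to C(W^cQ)$ and a commuting triangle with $\theta$ and $C\theta^*$. The problem is twofold. First, you never actually specify this map; the obvious candidate $C\eta^* \circ \eta$ is indeed a weak equivalence, but then the content of the triangle is the assertion $C\theta^* \circ \theta = C\eta^* \circ \eta$, and this is \emph{not} true on the nose. (One can already see this from the paper's computation: the composite $C\zeta^* \circ C\theta^* \circ \theta \circ \zeta$ is not the identity on $CQ(I)$, but the self-map induced by $\psi_T : \bar{w}(T) \to \bar{w}(T)$ which is trivial for $T \neq \tau_I$.) So your ``commuting triangle'' would only commute up to homotopy, and verifying that homotopy is the whole difficulty --- it is not a matter of bookkeeping or Fubini swaps.

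The paper avoids this by a more computational manoeuvre. Rather than comparing $W(CQ)$ with $C(W^cQ)$ directly, it composes $\theta$ and $C\theta^*$ with the \emph{section} $\zeta: CQ \to WCQ$ (from Lemma~\ref{lem:WP-P}) and the \emph{retraction} $C\zeta^*: CW^cQ \to CQ$ (from Lemma~\ref{lem:Q-WQ}) to obtain an endomorphism
\[
\psi: CQ \xrightarrow{\;\zeta\;} WCQ \xrightarrow{\;\theta\;} CBCQ \xrightarrow{\;C\theta^*\;} CW^cQ \xrightarrow{\;C\zeta^*\;} CQ.
\]
Because $\zeta$ and $\zeta^*$ pick out the $T=\tau_I$ terms, one can trace through the definitions and see that $\psi_I$ is induced by the self-maps $\psi_T:\bar{w}(T)\to\bar{w}(T)$ which are the identity for $T=\tau_I$ and trivial otherwise. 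Since $\bar{w}(T)$ is contractible for $T\neq\tau_I$, these $\psi_T$ form an objectwise weak equivalence of Reedy cofibrant $\mathsf{T}(I)$-diagrams, hence $\psi$ is a weak equivalence. A diagram chase (using $\theta:W\tilde{C}Q \weq CB\tilde{C}Q$ on the termwise-cofibrant side, and that $\zeta$, $C\zeta^*$ are weak equivalences) then forces $C\tilde\theta^*$ to be a weak equivalence.

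In short: you correctly located the difficulty (``the bookkeeping needed to produce and verify the commuting triangle''), but what is needed there is not bookkeeping --- it is the explicit identification of the composite as a map induced by self-maps of the $\bar{w}(T)$, and this is most cleanly done after collapsing to an endomorphism of $CQ$ via $\zeta$ and $\zeta^*$ rather than via a putative comparison $W(CQ)\to C(W^cQ)$.
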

\begin{proof}
The morphism $\tilde{\theta}^*$ is obtained by composing the map $\theta^*$ of Definition \ref{def:BC-W} with the termwise-cofibrant replacement map $\tilde{C}Q \to CQ$. To check that $\tilde{\theta}^*$ is a $C$-equivalence of pre-cooperads, we have to show that it induces a weak equivalence $CB\tilde{C}Q \to CW^cQ$ of operads, that is, of symmetric sequences. We have the following diagram of symmetric sequences
\[ \begin{diagram}
  \node{\tilde{C}Q} \arrow{e,t}{\sim} \arrow{s,lr}{\zeta}{\sim} \node{CQ} \arrow{s,lr}{\zeta}{\sim} \\
  \node{W\tilde{C}Q} \arrow{e,t}{\sim} \arrow{s,lr}{\theta}{\sim} \node{WCQ} \arrow{s,l}{\theta} \\
  \node{CB\tilde{C}Q} \arrow{e} \node{CBCQ} \arrow{e,t}{C\theta^*} \node{CW^cQ} \arrow{e,tb}{C\zeta^*}{\sim} \node{CQ}
\end{diagram} \]
The map $C\zeta^*$ is given by applying the cobar construction $C$ to the natural transformation $\zeta^*: W^cQ \to Q$ of Lemma \ref{lem:Q-WQ}. Since $\zeta^*$ is an objectwise weak equivalence between Reedy fibrant diagrams $\mathsf{T}(I) \to \spectra$, the map $C\zeta^*$ is a weak equivalence of symmetric sequences.

It is now sufficient to show that the composite
\[ \dgTEXTARROWLENGTH=3em \psi: CQ \arrow{e,t}{\zeta} WCQ \arrow{e,t}{\theta} CBCQ \arrow{e,t}{C\theta^*} CW^cQ \arrow{e,t}{C\zeta^*} CQ \]
is a weak equivalence of symmetric sequences. Following through the definitions, we can explicitly describe the map $\psi$ as follows. For a nonempty finite set $I$, the map
\[ \psi_I: \Map_{T \in \mathsf{T}(I)}(\bar{w}(T),Q(T)) \to \Map_{T \in \mathsf{T}(I)}(\bar{w}(T),Q(T)) \]
is determined by the natural maps
\[ \psi_T: \bar{w}(T) \to \bar{w}(T) \]
that are trivial if $T \neq \tau_I$ and the identity if $T = \tau_I$. We then see that $\psi_I$ is a weak equivalence of spectra by noticing that the $\psi_T$, considered as a map of $\mathsf{T}(I)$-indexed diagrams, form an objectwise weak equivalence between Reedy cofibrant objects. (For $T \neq \tau_I$, $\psi_T$ is a weak equivalence of simplicial sets because $\bar{w}(T)$ is contractible.) Therefore, the map induced by the $\psi_T$ by applying
\[ \Map_{T \in \mathsf{T}(I)}(-,Q(T)) \]
is a weak equivalence of spectra.
\end{proof}

Combining \ref{lem:BC-W-weq} and \ref{lem:Q-WQ}, we see that $B\tilde{C}Q$ is naturally C-weakly equivalent to $Q$. It now follows that the derived functors of $B$ and $C$ are inverse equivalences. In turn this allows us to deduce the main theorem of this paper.

\begin{theorem} \label{thm:Quillen-equiv}
The functors $\mathbb{B}: \mathsf{Operad} \to \mathsf{PreCooperad}$ and $C: \mathsf{PreCooperad} \to \mathsf{Operad}$ form a Quillen equivalence between the projective model structure on the category of operads and the $C$-model structure on the category of pre-cooperads.
\end{theorem}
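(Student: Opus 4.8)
The plan is to use the standard fact that a Quillen adjunction is a Quillen equivalence as soon as one of its total derived functors is an equivalence of homotopy categories: we already know from Lemma \ref{lem:adjunction} that $(\mathbb{B},C)$ is a Quillen adjunction, so $L\mathbb{B}\dashv RC$, and if $RC$ is an equivalence then, by uniqueness of adjoints, $L\mathbb{B}$ is a quasi-inverse to it (with invertible derived unit and counit). Thus everything reduces to showing that $C$ induces an equivalence $\mathrm{Ho}(\mathsf{PreCooperad})\xrightarrow{\sim}\mathrm{Ho}(\mathsf{Operad})$, and the claim is that the bar construction provides the inverse.

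Two preliminary observations make $RC$ easy to handle. First, by the very definition of the $C$-model structure (Definition \ref{def:weq-precooperad}, Proposition \ref{prop:model-precooperad}), the functor $C$ preserves and reflects all weak equivalences of pre-cooperads; so no fibrant replacement is needed and $RC$ is modelled by the functor that $C$ induces on homotopy categories (and $C$ reflects isomorphisms there). Second, the bar construction restricted to termwise-cofibrant operads preserves weak equivalences by Proposition \ref{prop:bar-cobar-homotopy} and takes values in $\mathsf{Cooperad}\subseteq\mathsf{PreCooperad}$, where a termwise weak equivalence is strict and hence a $C$-equivalence. Precomposing with the natural termwise-cofibrant replacement $\tilde P\weq P$ therefore yields a well-defined functor $\underline B\colon\mathrm{Ho}(\mathsf{Operad})\to\mathrm{Ho}(\mathsf{PreCooperad})$, $P\mapsto B\tilde P$.

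It then remains to check that $\underline B$ is a two-sided inverse of $C$ on homotopy categories. In one direction, $C\underline B(P)=CB\tilde P$, and Theorem \ref{thm:W-CB} applied to the termwise-cofibrant operad $\tilde P$ gives a natural weak equivalence $\theta\colon W\tilde P\weq CB\tilde P$, while Lemma \ref{lem:WP-P} gives $\eta\colon W\tilde P\weq\tilde P$; together with $\tilde P\weq P$ these assemble into a natural zigzag of weak equivalences $CB\tilde P\homeq P$, so $C\underline B\cong\mathrm{id}$. In the other direction, $\underline B\,RC(Q)\cong B\tilde CQ$, and Lemma \ref{lem:BC-W-weq} supplies a natural $C$-equivalence $\tilde\theta^{*}\colon B\tilde CQ\weq W^cQ$ while Lemma \ref{lem:Q-WQ} supplies a natural strict (hence $C$-) equivalence $\eta^{*}\colon Q\weq W^cQ$, so $\underline B\,RC\cong\mathrm{id}$. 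Hence $RC$ is an equivalence, and by the opening remark $(\mathbb{B},C)$ is a Quillen equivalence.

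The substantive inputs here — Theorem \ref{thm:W-CB}, Lemma \ref{lem:BC-W-weq}, Lemma \ref{lem:Q-WQ} — have all been proved already, so the only real care needed is bookkeeping: one must check that the comparison maps $\theta$, $\theta^{*}$, $\eta$, $\eta^{*}$ and the termwise-cofibrant replacement are natural enough to descend to natural isomorphisms of functors between the homotopy categories, and one invokes the soft categorical fact that a left adjoint of an equivalence is automatically a quasi-inverse. (Alternatively one could verify Hovey's criterion by hand, showing directly that the derived unit $P\to C\mathbb{B}P$ is a weak equivalence for cofibrant $P$; this would require first identifying $\mathbb{B}P$ with $BP$ up to $C$-equivalence by comparing the coend formulas over trees, and then invoking $CBP\homeq WP\homeq P$ — but the argument above is cleaner and matches the remark preceding the theorem.)
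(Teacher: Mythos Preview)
Your proof is correct and takes essentially the same approach as the paper's: show that $C$ induces an equivalence of homotopy categories by exhibiting $B\tilde{(-)}$ as a two-sided inverse, using Theorem~\ref{thm:W-CB} (with Lemma~\ref{lem:WP-P}) for one direction and Lemmas~\ref{lem:BC-W-weq} and~\ref{lem:Q-WQ} for the other, then conclude that the Quillen adjunction is a Quillen equivalence. The paper's proof is terser (it just says $C$ has a left inverse and a right inverse, hence is an equivalence), while you are more explicit about why $C$ needs no fibrant replacement and about the naturality bookkeeping, but the substance is the same.
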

\begin{proof}
Combining Lemmas \ref{lem:BC-W-weq} and \ref{lem:Q-WQ}, we see that the cobar construction $C$ has a left inverse on the homotopy category. Since Theorem \ref{thm:W-CB} tells us $C$ has a right inverse, it follows that $C$ induces an equivalence of homotopy categories. Therefore the Quillen adjunction $(\mathbb{B},C)$ is a Quillen equivalence.
\end{proof}

The $C$-equivalences in the category of pre-cooperads are still somewhat mysterious as they are defined indirectly via the cobar construction. In order to add significance to Theorem \ref{thm:Quillen-equiv} we now interpret the homotopy category of pre-cooperads more intrinsically. The key notion here is that of a `quasi-operad' defined below. We show below that the cofibrant pre-cooperads in the $C$-model structure are termwise-cofibrant quasi-operads, and that a map of quasi-cooperads is a $C$-equivalence if and only if it is a strict weak equivalence. It follows that the homotopy category of the $C$-model structure can be identified with the homotopy category of termwise-cofibrant quasi-cooperads, with respect to the strict weak equivalences.

\begin{definition}[Quasi-cooperads] \label{def:quasi-cooperad}
We say that a pre-cooperad $Q$ is a \emph{quasi-cooperad} if the maps
\[ m_{T,i,U}: Q(T) \smsh Q(U) \weq Q(T \cup_i U) \]
are weak equivalences of spectra for all $T,i,U$. Thus a quasi-cooperad is `almost' an actual cooperad, except that the putative decomposition maps are only defined up to inverse weak equivalences. For example we have maps
\[ Q(\tau_3) \to Q(\tau_2 \cup_i \tau_2) \lweq Q(\tau_2) \smsh Q(\tau_2). \]
In particular, any cooperad is a quasi-cooperad.
\end{definition}

The key fact about quasi-cooperads is the following.

\begin{prop} \label{prop:quasi-weq}
A map $Q \to Q'$ of quasi-cooperads is a $C$-equivalence if and only if it is a strict weak equivalence.
\end{prop}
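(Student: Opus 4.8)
The plan is to prove the two implications separately; the implication ``strict weak equivalence $\Rightarrow$ $C$-equivalence'' needs nothing about quasi-cooperads and is the same argument as the second half of the proof of Proposition~\ref{prop:bar-cobar-homotopy}. Given a strict weak equivalence $f\colon Q\to Q'$, for each $I$ the map $f$ restricts to an objectwise weak equivalence of the Reedy-fibrant $\mathsf{T}(I)$-diagrams $Q(-)\to Q'(-)$ (every spectrum is fibrant), and since $\bar{w}(-)\colon\mathsf{T}(I)\to\sset$ is Reedy cofibrant, \cite[18.4.13]{hirschhorn:2003} shows that $CQ(I)=\Map_{T\in\mathsf{T}(I)}(\bar{w}(T),Q(T))\to CQ'(I)$ is a weak equivalence, so $f$ is a $C$-equivalence.

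For the converse, let $f\colon Q\to Q'$ be a $C$-equivalence of quasi-cooperads. First I would reduce to the case that $Q,Q'$ are termwise-cofibrant: take functorial strict-cofibrant replacements $\tilde{Q}\to Q$, $\tilde{Q}'\to Q'$ (Proposition~\ref{prop:strict-precooperad}); these are termwise-cofibrant by Lemma~\ref{lem:strictly-cofibrant-precooperad}, and they are again quasi-cooperads by applying two-out-of-three to the naturality squares of the replacement map (using that smashing with a cofibrant $S$-module preserves weak equivalences in $\spectra$, cf.~\cite{elmendorf/kriz/mandell/may:1997}). Since the replacement maps are strict weak equivalences, hence $C$-equivalences, the induced $\tilde{f}\colon\tilde{Q}\to\tilde{Q}'$ is a $C$-equivalence and it suffices to treat it. So assume $Q,Q'$ termwise-cofibrant. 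The structural observation I would now record is: \emph{if $f_{\tau_{I_t}}\colon Q(\tau_{I_t})\to Q'(\tau_{I_t})$ is a weak equivalence for every internal vertex $t$ of an $I$-tree $T$, then $f_T\colon Q(T)\to Q'(T)$ is a weak equivalence.} This is proved by induction on the number of internal vertices: split $T=T_1\cup_i T_2$ along an internal edge, apply the inductive hypothesis to $T_1,T_2$ (whose vertex sets are among those of $T$), smash the resulting weak equivalences of cofibrant spectra, and compare along the weak equivalences $m_{T_1,i,T_2}$ and $m'_{T_1,i,T_2}$ supplied by the quasi-cooperad axiom. In particular, once we know $f_{\tau_I}$ is a weak equivalence for every finite set $I$ with $|I|\geq 2$, the observation gives that $f$ is a strict weak equivalence; so it remains to prove that $f_{\tau_I}$ is a weak equivalence for all $I$.

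I would prove this by induction on $n=|I|$, using the skeletal filtration of $\mathsf{T}(I)$ by the number $v(T)$ of internal vertices. Writing $\mathsf{T}_{\leq k}(I)$ for the full subcategory on trees with $v(T)\leq k$ and $C_{\leq k}Q(I):=\Map_{T\in\mathsf{T}_{\leq k}(I)}(\bar{w}(T),Q(T))$, the inclusions $\mathsf{T}_{\leq k-1}(I)\subseteq\mathsf{T}_{\leq k}(I)$ give a \emph{finite} tower (since $\mathsf{T}(I)=\mathsf{T}_{\leq n-1}(I)$)
\[ CQ(I)=C_{\leq n-1}Q(I)\to\cdots\to C_{\leq 1}Q(I)=\Map(\bar{w}(\tau_I),Q(\tau_I))=\Omega Q(\tau_I). \]
Because every non-identity morphism of $\mathsf{T}(I)$ strictly raises $v$, the matching objects in this Reedy-type situation are trivial, and the Reedy cofibrancy of $\bar{w}(-)$ (established in the proof of Proposition~\ref{prop:bar-cobar-homotopy}) shows that $C_{\leq k}Q(I)\to C_{\leq k-1}Q(I)$ is a fibration of spectra with fibre $\prod_{[T]:\,v(T)=k}\Map(\bar{w}(T)/L_T,Q(T))$, where $L_T$ is the latching object of $\bar{w}(-)$ at $T$. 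Inside $\bar{w}(T)$ the subspace $L_T$ agrees with the subspace $\bar{w}_1(T)$ appearing in the proof of Lemma~\ref{lem:omega-sigma}, so $\bar{w}(T)/L_T\isom S^{v(T)}$ and the $k$-th layer is a product of copies of $\Omega^k Q(T)$ over isomorphism classes of $I$-trees with exactly $k$ internal vertices. Now run the induction. The base case $n=2$ is immediate: $\mathsf{T}(2)=\{\tau_2\}$ and $CQ(2)=\Omega Q(\tau_2)$, so $CQ(2)\weq CQ'(2)$ forces $Q(\tau_2)\weq Q'(\tau_2)$ because $\Omega$ reflects weak equivalences of spectra. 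For $n\geq 3$, every $I$-tree with at least two internal vertices has all its incoming-edge-sets $I_t$ of size $<n$ (if some $|I_t|=n$ then all leaves meet $t$ and $T=\tau_I$), so by the inductive hypothesis together with the structural observation of the previous paragraph, $f_T$ is a weak equivalence for every $T$ with $v(T)\geq 2$; hence the layer maps of the two towers are all weak equivalences. Comparing the two towers of fibrations of spectra via the long exact homotopy sequence at each stage, starting from the weak equivalence $CQ(I)\weq CQ'(I)$ at the top and descending to the bottom, we conclude that $\Omega Q(\tau_I)\to\Omega Q'(\tau_I)$, and therefore $f_{\tau_I}$, is a weak equivalence. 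This closes the induction and finishes the proof.

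The main obstacle I anticipate is the bookkeeping in the last paragraph: pinning down the identification $\bar{w}(T)/L_T\isom S^{v(T)}$ and the triviality of the matching objects precisely, and interleaving the induction on $|I|$ with the reduction of strict weak equivalences to weak equivalences on corollas so that no circularity creeps in. The remaining inputs --- that $\Omega$ reflects weak equivalences of spectra, that the end $\Map_{\mathsf{T}(I)}(\bar{w}(-),-)$ against a Reedy-fibrant diagram computes a homotopy limit, and that the smash product and loop functors are homotopically well behaved on the relevant $S$-modules --- are standard for $\spectra$ but should be invoked explicitly.
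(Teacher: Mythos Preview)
Your proof is correct and follows the same overall strategy as the paper: reduce to the termwise-cofibrant case, induct on $|I|$, handle non-corolla trees via the quasi-cooperad structure maps, and extract the weak equivalence on $Q(\tau_I)$ from that on $CQ(I)$ via a fibre-sequence argument using Reedy cofibrancy of $\bar{w}(-)$.

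The one difference worth noting is in the last step. You filter $\mathsf{T}(I)$ by the number of internal vertices and run a full tower $C_{\leq n-1}Q(I)\to\cdots\to C_{\leq 1}Q(I)=\Omega Q(\tau_I)$, identifying each layer as $\prod_{[T]}\Omega^{v(T)}Q(T)$ via $\bar{w}(T)/L_T\isom S^{v(T)}$. The paper instead uses a single two-term splitting of the $\mathsf{T}(I)$-diagram $Q$ into a piece $Q_0$ concentrated at $\tau_I$ and its complement $Q_1$, yielding the fibre sequence $\Map_T(\bar{w}(T),Q_1(T))\to CQ(I)\to\Omega Q(\tau_I)$. This avoids the explicit layer identification you flagged as an obstacle: the left-hand term is a weak equivalence simply because $Q_1\to Q'_1$ is an objectwise weak equivalence of Reedy-fibrant diagrams (by the inductive hypothesis applied to all $T\neq\tau_I$ at once), and one application of the five lemma suffices. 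Your finer filtration works equally well but does more than is needed.
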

\begin{proof}
First note that strict weak equivalences of pre-cooperads are always $C$-equivalences so there is only one direction to do here. Suppose therefore that $\phi: Q \to Q'$ is a $C$-equivalence. We may assume without loss of generality that $Q$ and $Q'$ are cofibrant in the strict model structure on pre-cooperads. Hence by Lemma \ref{lem:strictly-cofibrant-precooperad} each $Q(T)$ is cofibrant in $\spectra$.

We prove that $\phi_T: Q(T) \to Q'(T)$ is a weak equivalence by induction on $|I|$ where $T \in \mathsf{T}(I)$. For $|I| = 2$, the only possible $T$ is $\tau_I$. We have
\[ CQ(I) \isom \Omega Q(\tau_I) \]
and hence, since $\spectra$ is a stable model category, $C\phi_I: CQ(I) \to CQ'(I)$ is a weak equivalence if and only if $\phi_{\tau_I}: Q(\tau_I) \to Q'(\tau_I)$ is. This completes the base case of the induction.

Now suppose that $\phi_T: Q(T) \to Q'(T)$ is a weak equivalence whenever $T \in \mathsf{T}(I)$ with $|I| < n$. Suppose that $|I| = n$ and consider $T \in \mathsf{T}(I)$. If $T \neq \tau_I$, then we have $T = U \cup_j V$ for some trees $U \in \mathsf{T}(J)$ and $V \in \mathsf{T}(K)$ where $I = J \cup_j K$, and $|J|,|K| < n$. By the induction hypothesis, and since $Q$ and $Q'$ are termwise-cofibrant quasi-cooperads, we then have the following diagram
\[ \begin{diagram}
  \node{Q(U) \smsh Q(V)} \arrow{e,t}{\sim} \arrow{s,lr}{\sim}{\phi_U \smsh \phi_V} \node{Q(T)} \arrow{s,r}{\phi_T} \\
  \node{Q'(U) \smsh Q'(V)} \arrow{e,t}{\sim} \node{Q'(T)}
\end{diagram} \]
which implies that $\phi_T: Q(T) \to Q'(T)$ is a weak equivalence.

Finally consider the case $T = \tau_I$ and make the following definitions. Define $Q_1: \mathsf{T}(I) \to \spectra$ by
\[ Q_1(T) := \begin{cases} Q(T) & \text{if $T \neq \tau_I$}; \\ \; \; * & \text{if $T = \tau_I$}; \end{cases} \]
and $Q_0: \mathsf{T}(I) \to \spectra$ by
\[ Q_0(T) := \begin{cases} \; \; * & \text{if $T \neq \tau_I$}; \\ Q(\tau_I) & \text{if $T = \tau_I$}. \end{cases} \]
We then have a strict fibre sequence of maps of $\mathsf{T}(I)$-diagrams:
\[ Q_1 \to Q \to Q_0 \]
with $Q \to Q_0$ a Reedy fibration. (Recall that Reedy fibrations are precisely the objectwise fibrations for $\mathsf{T}(I)$-indexed diagrams.)

Applying $\Map_{T \in \mathsf{T}(I)}(\bar{w}(T),-)$ to this sequence we get a fibre sequence of spectra, which we can easily identify as
\[ \Map_{T}(\bar{w}(T),Q_1(T)) \to CQ(I) \to \Omega Q(\tau_I) \]
where $CQ(I) \to \Omega Q(\tau_I)$ is a fibration in $\spectra$. In particular, this is a homotopy-fibre sequence. We now have a diagram
\[ \begin{diagram}
  \node{\Map_{T}(\bar{w}(T),Q_1(T))} \arrow{e} \arrow{s,lr}{\sim}{\phi} \node{CQ(I)} \arrow{e} \arrow{s,lr}{\sim}{C\phi_I} \node{\Omega Q(\tau_I)} \arrow{s,r}{\Omega \phi_{\tau_I}} \\
  \node{\Map_{T}(\bar{w}(T),Q'_1(T))} \arrow{e} \node{CQ'(I)} \arrow{e} \node{\Omega Q'(\tau_I)}
\end{diagram} \]
where the rows are homotopy-fibre sequences of spectra. The left-hand vertical map is an equivalence by the case $T \neq \tau_I$, since $\phi: Q_1 \to Q'_1$ is an objectwise weak equivalence between Reedy fibrant diagrams. The centre vertical map is a weak equivalence by the hypothesis that $\phi: Q \to Q'$ is a $C$-equivalence. Thus we deduce that $\Omega\phi_{\tau_I}$ is a weak equivalence and hence so is $\phi_{\tau_I}$, again using the fact that $\spectra$ is a stable model category.
\end{proof}

\begin{cor} \label{cor:BC-W}
If $Q$ is a quasi-cooperad, then the morphism
\[ \tilde{\theta}^*: B\tilde{C}Q \to W^cQ \]
of Lemma \ref{lem:BC-W-weq} is a strict weak equivalence between quasi-cooperads. Thus for a quasi-cooperad $Q$, there is a zigzag of strict weak equivalences of quasi-cooperads
\[ B\tilde{C}Q \homeq Q. \]
\end{cor}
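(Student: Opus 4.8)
The plan is to realize the claimed zigzag as
\[ B\tilde{C}Q \weq W^cQ \lweq Q, \]
with the first arrow $\tilde{\theta}^*$ from Lemma \ref{lem:BC-W-weq} and the second arrow $\eta^*$ from Lemma \ref{lem:Q-WQ}, and then to check that both arrows are \emph{strict} weak equivalences between objects that are in fact quasi-cooperads. Two of the three objects are already under control: $Q$ is a quasi-cooperad by hypothesis, and $\eta^*$ is a strict weak equivalence by Lemma \ref{lem:Q-WQ}; moreover $B\tilde{C}Q$ is an honest reduced cooperad, since $\tilde{C}Q$ is a reduced operad and $B$ takes operads to cooperads (Definition \ref{def:bar}), hence is a quasi-cooperad by Definition \ref{def:quasi-cooperad}. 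So the two things left to establish are (i) that $W^cQ$ is a quasi-cooperad, and (ii) that $\tilde{\theta}^*$ is a strict weak equivalence.

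For (i) I would argue as follows. The structure maps of $W^cQ$ are $m_{T,i,T'}\colon W^cQ(T)\smsh W^cQ(T')\to W^cQ(T\cup_i T')$, and I need these to be weak equivalences of spectra. Since $\eta^*\colon Q\to W^cQ$ is a map of pre-cooperads, for $T\in\mathsf{T}(J)$ and $T'\in\mathsf{T}(K)$ with $I=J\cup_i K$ there is a commuting square whose top edge is the structure map $m_{T,i,T'}$ of $Q$, whose bottom edge is the structure map $m_{T,i,T'}$ of $W^cQ$, and whose vertical edges are $\eta^*_T\smsh\eta^*_{T'}$ and $\eta^*_{T\cup_i T'}$. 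The top edge is a weak equivalence because $Q$ is a quasi-cooperad; the right edge is a weak equivalence by Lemma \ref{lem:Q-WQ}; and the left edge is a weak equivalence because smashing preserves weak equivalences between cofibrant spectra. Two-out-of-three then forces the bottom edge to be a weak equivalence, so $W^cQ$ is a quasi-cooperad. To obtain the cofibrancy needed on the left edge one may, exactly as in the proof of Proposition \ref{prop:quasi-weq}, first replace $Q$ by a strictly-cofibrant (hence, by Lemma \ref{lem:strictly-cofibrant-precooperad}, termwise-cofibrant) quasi-cooperad; this changes neither $B\tilde{C}Q$ nor $W^cQ$ up to strict weak equivalence, so does not affect the conclusion.

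Given (i), the map $\tilde{\theta}^*\colon B\tilde{C}Q\to W^cQ$ is a morphism of quasi-cooperads, and Lemma \ref{lem:BC-W-weq} says it is a $C$-equivalence, so Proposition \ref{prop:quasi-weq} upgrades it to a strict weak equivalence, proving (ii). Assembling these: $\tilde{\theta}^*$ and $\eta^*$ are strict weak equivalences between quasi-cooperads, so the zigzag $B\tilde{C}Q\weq W^cQ\lweq Q$ yields $B\tilde{C}Q\homeq Q$ as required. The step I expect to be the main obstacle is (i) — in particular keeping the cofibrancy bookkeeping straight so that the smash $\eta^*_T\smsh\eta^*_{T'}$ really is a weak equivalence — but this is the same reduction already performed in Proposition \ref{prop:quasi-weq}; everything else is a formal assembly of previously proved results.
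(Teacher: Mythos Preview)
Your proposal is correct and follows essentially the same route as the paper: the paper's proof simply asserts that Lemma~\ref{lem:Q-WQ} gives $W^cQ$ a quasi-cooperad (which is exactly your step~(i), via the commuting square and two-out-of-three), notes that $B\tilde{C}Q$ is an actual cooperad, and then invokes Proposition~\ref{prop:quasi-weq} together with Lemma~\ref{lem:BC-W-weq} to upgrade the $C$-equivalence $\tilde{\theta}^*$ to a strict weak equivalence. You have been more careful than the paper about the cofibrancy needed for $\eta^*_T\smsh\eta^*_{T'}$ to be a weak equivalence, but this is a refinement of the same argument rather than a different one.
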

\begin{proof}
Lemma \ref{lem:Q-WQ} tells us that $W^cQ$ is a quasi-cooperad, and $B\tilde{C}Q$ is a cooperad, hence a quasi-cooperad. This then follows from Proposition \ref{prop:quasi-weq} and Lemma \ref{lem:BC-W-weq}.
\end{proof}

\begin{cor} \label{cor:quasi}
A pre-cooperad $Q$ is a quasi-cooperad if and only if $Q$ is strictly weakly equivalent to a termwise-cofibrant cooperad. \qed
\end{cor}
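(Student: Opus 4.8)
The plan is to prove the two implications separately; the forward direction is essentially a repackaging of Corollary~\ref{cor:BC-W}, and the converse is a transport argument along strict weak equivalences. For the direction ``$Q$ a quasi-cooperad $\implies$ $Q$ strictly weakly equivalent to a termwise-cofibrant cooperad'', I would simply invoke Corollary~\ref{cor:BC-W}, which supplies a zigzag of strict weak equivalences $B\tilde{C}Q \homeq Q$. The pre-cooperad $B\tilde{C}Q$ is the bar construction of the operad $\tilde{C}Q$, hence an honest reduced cooperad (its maps $m_{T,i,U}$ are isomorphisms, as in Example~\ref{ex:precooperad}); and since $\tilde{C}Q$ is termwise-cofibrant, Proposition~\ref{prop:bar-cobar-homotopy} shows $B\tilde{C}Q$ is a termwise-cofibrant cooperad. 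Thus $Q$ is strictly weakly equivalent to the termwise-cofibrant cooperad $B\tilde{C}Q$, as required.

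For the converse, suppose $Q$ is strictly weakly equivalent to a termwise-cofibrant cooperad $R$. Since $R$ is a cooperad its maps $m_{T,i,U}$ are isomorphisms, so $R$ is trivially a quasi-cooperad, and it suffices to check that the quasi-cooperad condition is preserved along each strict weak equivalence appearing in the zigzag. Given a strict weak equivalence $f\colon P\to P'$, the naturality square for $m_{T,i,U}$ has the weak equivalence $f_{T\cup_i U}$ as one edge, so $m^P_{T,i,U}$ is a weak equivalence if and only if $f_T\smsh f_U\colon P(T)\smsh P(U)\to P'(T)\smsh P'(U)$ is, and symmetrically with $P$ and $P'$ interchanged. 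The substantive point is therefore to see that $f_T\smsh f_U$ is a weak equivalence whenever $f_T$ and $f_U$ are: I would reduce to this by first replacing the pre-cooperads occurring in the zigzag by strictly-cofibrant ones, which are termwise-cofibrant by Lemma~\ref{lem:strictly-cofibrant-precooperad}, so that the smash product in $\spectra$ is homotopically well-behaved on the maps in question, and then pass the conclusion back to $Q$ itself. Threading this along the zigzag connecting $Q$ and $R$ gives that $Q$ is a quasi-cooperad.

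The only delicate point is thus the homotopy invariance of the smash product in $\spectra$ as applied to the maps $f_T\smsh f_U$; everything else is routine bookkeeping with the structure maps $m_{T,i,U}$, the identification of $B\tilde{C}Q$ as a termwise-cofibrant cooperad, and the zigzag furnished by Corollary~\ref{cor:BC-W}.
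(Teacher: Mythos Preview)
Your forward direction is exactly what the paper intends: the corollary is marked with a bare \qed and is meant to be read off from Corollary~\ref{cor:BC-W} together with the fact that $B\tilde{C}Q$ is a termwise-cofibrant cooperad.

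For the backward direction your outline is the right idea, but the step ``pass the conclusion back to $Q$ itself'' does not actually close. If $\tilde{Q}\to Q$ is a strictly-cofibrant replacement and you have shown $\tilde{Q}$ is a quasi-cooperad, then in the square
\[
\begin{diagram}
\node{\tilde{Q}(T)\smsh\tilde{Q}(U)} \arrow{e,t}{\sim} \arrow{s} \node{\tilde{Q}(T\cup_i U)} \arrow{s,r}{\sim}\\
\node{Q(T)\smsh Q(U)} \arrow{e} \node{Q(T\cup_i U)}
\end{diagram}
\]
you need the left vertical map to be a weak equivalence in order to conclude anything about the bottom map; but that left vertical is again a smash of weak equivalences with non-cofibrant targets, which is precisely the problem you were trying to avoid. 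So replacing intermediate objects in the zigzag does not help you with the endpoints.

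This is not really a defect in your argument so much as an imprecision in the statement as written: without a termwise-cofibrancy hypothesis on $Q$, the implication ``strictly weakly equivalent to a cooperad $\Rightarrow$ quasi-cooperad'' need not hold, since the smash product in $\spectra$ is not homotopy invariant on arbitrary objects. The paper is tacitly working up to cofibrant replacement here (as it does throughout; compare Proposition~\ref{prop:cofibrant-precooperad} and Corollary~\ref{cor:precooperad}, which immediately restrict to strictly- or termwise-cofibrant objects). If you add the hypothesis that $Q$ is termwise-cofibrant, your transport argument along the zigzag goes through cleanly and matches the intended one-line proof.
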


\begin{prop} \label{prop:cofibrant-precooperad}
A pre-cooperad $Q$ is cofibrant in the $C$-model structure if and only if it is a strictly cofibrant quasi-cooperad.
\end{prop}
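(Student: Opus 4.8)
The plan is to prove the two implications separately and to deduce ``if'' from ``only if'' by a retract argument. Three observations recur. (i) The $C$-model structure is a right Bousfield localization of the strict model structure of Proposition \ref{prop:strict-precooperad}, and every strict trivial fibration is a $C$-equivalence (since $C$ preserves strict weak equivalences, cf.\ the proof of Lemma \ref{lem:adjunction}); hence the $C$-trivial fibrations contain the strict ones, so every $C$-cofibration is a strict cofibration, and every $C$-cofibrant pre-cooperad is strictly cofibrant and therefore termwise-cofibrant by Lemma \ref{lem:strictly-cofibrant-precooperad}. (ii) The $C$-model structure is simplicial and its cotensoring is computed termwise, so a simplicial homotopy equivalence of pre-cooperads is termwise a simplicial homotopy equivalence, hence a strict weak equivalence. (iii) Every pre-cooperad is $C$-fibrant, so by Theorem \ref{thm:Quillen-equiv} the derived counit $\mathbb{B}P \to Q$ is a $C$-equivalence for every pre-cooperad $Q$, where $P \weq CQ$ denotes a cofibrant replacement in $\mathsf{Operad}$.

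For ``only if'', suppose $Q$ is $C$-cofibrant. By (i) it is strictly cofibrant, and it remains to see it is a quasi-cooperad. Since $\mathbb{B}$ is left Quillen, $\mathbb{B}P$ is $C$-cofibrant, so the $C$-equivalence $\mathbb{B}P \to Q$ is a weak equivalence between $C$-cofibrant-fibrant objects, hence a simplicial homotopy equivalence, hence a strict weak equivalence by (ii). Because a strict weak equivalence between termwise-cofibrant pre-cooperads carries quasi-cooperads to quasi-cooperads (a two-out-of-three argument on the naturality squares for the structure maps, using that $\smsh$ preserves weak equivalences of cofibrant spectra), it suffices to show $\mathbb{B}P$ is a quasi-cooperad. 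Here $P$, being a cofibrant operad, is termwise-cofibrant, so $P(-)$ is a Reedy cofibrant $\mathsf{T}(I)^{op}$-diagram (as in the proof of Proposition \ref{prop:bar-cobar-homotopy}) and $\mathbb{B}P(T) = \bar{w}(T;U)^{\mathsf{rev}} \smsh_U P(U)$ is a homotopy coend over the poset $\{U \le T\}$. Since $\bar{w}(T;U) = \Smsh_{u\in U}\bar{w}(T_u)$ is contractible whenever $U < T$ (some fragment $T_u$ then contains an internal edge, and $\bar{w}$ of such a tree is contractible by the computation in the proof of Lemma \ref{lem:omega-sigma}), the inclusion $\bar{w}(T;T)^{\mathsf{rev}}\smsh P(T) \to \mathbb{B}P(T)$ of the term $U = T$ is a weak equivalence. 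As $\bar{w}(T;T) \isom \Smsh_{t\in T}S^1$ and the structure maps of $\mathbb{B}P$ are compatible with these inclusions via the isomorphisms $\mu$ of Definition \ref{def:left-adjoint} and $P(T)\smsh P(U)\isom P(T\cup_iU)$, the structure maps of $\mathbb{B}P$ are weak equivalences; so $\mathbb{B}P$, and therefore $Q$, is a quasi-cooperad.

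For ``if'', let $Q$ be a strictly cofibrant quasi-cooperad and factor $\emptyset \to Q$ in the $C$-model structure as a $C$-cofibration $\emptyset \to \tilde{Q}$ followed by a $C$-trivial fibration $p\colon \tilde{Q} \to Q$. Then $\tilde{Q}$ is $C$-cofibrant, so by the ``only if'' direction it is a quasi-cooperad; hence $p$ is a $C$-equivalence between quasi-cooperads, so a \emph{strict} weak equivalence by Proposition \ref{prop:quasi-weq}. Being also a strict fibration, $p$ is a strict trivial fibration, so the strict cofibrancy of $Q$ yields a section $s\colon Q \to \tilde{Q}$ of $p$. Thus $Q$ is a retract of the $C$-cofibrant pre-cooperad $\tilde{Q}$, hence is $C$-cofibrant.

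I expect the main obstacle to be the identification of the homotopy type of the coends defining $\mathbb{B}P$, that is, the verification that $\mathbb{B}P$ is a quasi-cooperad; the remaining steps are formal consequences of the localization and of Theorem \ref{thm:Quillen-equiv}. An alternative for ``only if'' would use Hirschhorn's description of the $C$-cofibrant objects as retracts of $\{\mathbb{F}J_n\}$-cellular pre-cooperads, together with the observation that each $\mathbb{F}J_n$ is a quasi-cooperad (its structure maps being inclusions of wedge summands whose complements are built from $\bar{w}$ of non-corolla trees, hence contractible); but this requires controlling colimits in $\mathsf{PreCooperad}$ and seems less clean than the route above.
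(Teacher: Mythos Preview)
Your ``if'' direction is correct and is essentially the paper's argument: take a $C$-cofibrant replacement $\tilde Q\to Q$, observe it is a strict trivial fibration by Proposition~\ref{prop:quasi-weq}, and conclude. (The paper phrases this as closure of the colocal class under strict weak equivalence rather than via a retract, but these are the same content.)

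Your ``only if'' direction follows a genuinely different route from the paper. The paper uses Hirschhorn's identification of the $C$-cofibrant objects with the $\{\mathbb{F}J_n\}$-colocal (cellular) objects, and then shows that the class of strictly cofibrant quasi-cooperads contains the generators and is closed under homotopy colimits (the latter via $\hocolim Q_\alpha \simeq B(\hocolim \tilde C Q_\alpha)$) and strict weak equivalences. You instead invoke the Quillen equivalence to produce a strict weak equivalence $\mathbb{B}P\to Q$ and reduce to showing that $\mathbb{B}P$ is a quasi-cooperad for cofibrant $P$. This is a nice shortcut and avoids the homotopy-colimit closure step.

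However, your justification that $\mathbb{B}P$ is a quasi-cooperad has a gap. You assert that the coend $\mathbb{B}P(T)=\bar w(T;U)^{\mathsf{rev}}\smsh_U P(U)$ is a homotopy coend and hence collapses onto the $U=T$ term because $\bar w(T;U)$ is contractible for $U<T$. But for this Reedy argument one needs $\bar w(T;-)$ to be Reedy cofibrant on $\{U\le T\}$, and it is not: its structure maps $\bar w(T;U)\to\bar w(T;U')$ are built from the decomposition maps $\nu$ of Definition~\ref{def:bar}, which are \emph{quotient} maps, not inclusions. (Concretely, for $|I|=3$ and $T$ with one internal edge, the map $\bar w(T;\tau_I)=\bar w(T)\to\bar w(T;T)\cong S^2$ is the surjection collapsing the $\bar w(\tau_I)\cong S^1$ sitting inside $\bar w(T)$.) So the latching maps are not cofibrations, the coend is not evidently a homotopy coend, and in particular you cannot read off that the canonical map from the $U=T$ summand is a weak equivalence without further work. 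Your strategy can be rescued by instead comparing $\mathbb{B}P$ with $BP$ directly for termwise-cofibrant $P$ (the paper uses exactly this fact, stated parenthetically in the proof of Proposition~\ref{prop:cofibrant-precooperad}'s predecessor), since $BP$ is an honest cooperad; but that comparison itself needs an argument you have not supplied.
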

\begin{proof}
By \cite[5.1.6]{hirschhorn:2003}, the cofibrant pre-cooperads in the $C$-model structure are the `$K$-cellular' objects (where $K$ is the set of free pre-cooperads defined in Proposition \ref{prop:model-precooperad}). The class of $K$-cellular objects is the smallest class of strictly cofibrant pre-cooperads that contains $K$ and is closed under strict weak equivalences and homotopy colimits. We therefore claim that the $K$-cellular pre-cooperads are precisely the strictly-cofibrant quasi-cooperads.

First note that each $\mathbb{F}J_n$ is a quasi-cooperad since $J_n(T)$ is contractible for $T \neq \tau_I$. (Compare with Remark \ref{rem:free-precooperad}.)

Next we claim that a homotopy colimit of strictly-cofibrant quasi-cooperads is again a quasi-cooperad. To see this, notice that we have strict weak equivalences
\[ \hocolim Q_\alpha \homeq \hocolim B\tilde{C}Q_\alpha \homeq B(\hocolim \tilde{C}Q_\alpha). \]
The first equivalence follows from the fact that a quasi-cooperad $Q_\alpha$ is strictly weakly equivalent to $B\tilde{C}Q_\alpha$, by Corollary \ref{cor:BC-W}, and the fact that the homotopy colimit preserves objectwise strict weak equivalences. The second equivalence exists because the bar construction $B$ preserves homotopy colimits (it is equivalent to $\mathbb{B}$ on cofibrant operads, and $\mathbb{B}$ is a left Quillen functor with respect to the strict model structure on pre-cooperads). But we have now shown that $\hocolim Q_\alpha$ is strictly weakly equivalent to a cooperad, hence is a quasi-cooperad.

Finally, the class of quasi-cooperads is closed under strict weak equivalence, so we deduce that the class of $K$-cellular pre-cooperads is contained in the class of quasi-cooperads.

Conversely, let $Q$ be any strictly-cofibrant quasi-cooperad. Let $\tilde{Q}$ be a cofibrant replacement for $Q$ in the $C$-model structure. We have just shown that $\tilde{Q}$ is a quasi-cooperad, so the $C$-equivalence $\tilde{Q} \weq Q$ must be a strict weak equivalence by Proposition \ref{prop:quasi-weq}. But $\tilde{Q}$ is $K$-cellular so $Q$ must also be.
\end{proof}

We can now interpret the homotopy category of pre-cooperads directly in terms of quasi-cooperads and termwise weak equivalences.

\begin{cor} \label{cor:precooperad}
The homotopy category of pre-cooperads and $C$-equivalences is equivalent to the homotopy category of termwise-cofibrant quasi-cooperads and strict weak equivalences.
\end{cor}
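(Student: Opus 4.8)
The plan is to derive this from Propositions \ref{prop:cofibrant-precooperad} and \ref{prop:quasi-weq} together with the standard identification of the homotopy category of a model category with its category of cofibrant-fibrant objects modulo homotopy. The first observation I would record is that \emph{every} pre-cooperad is fibrant in the $C$-model structure: by Proposition \ref{prop:model-precooperad} its fibrations are exactly the strict fibrations, and every pre-cooperad is strictly fibrant since every object of $\spectra$ is fibrant. Combined with Proposition \ref{prop:cofibrant-precooperad}, which identifies the $C$-cofibrant pre-cooperads with the strictly-cofibrant quasi-cooperads, this shows that the cofibrant-fibrant objects of the $C$-model structure are precisely the strictly-cofibrant quasi-cooperads.

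Next I would invoke the standard description of the homotopy category (see, e.g., \cite[\S8]{hirschhorn:2003}) to conclude that the homotopy category of pre-cooperads with $C$-equivalences inverted is equivalent to the category of strictly-cofibrant quasi-cooperads with morphisms the homotopy classes of maps, and hence --- since on cofibrant-fibrant objects the weak equivalences are exactly the homotopy equivalences --- to the localization of the category of strictly-cofibrant quasi-cooperads at the $C$-equivalences. By Proposition \ref{prop:quasi-weq} the $C$-equivalences between quasi-cooperads are precisely the strict weak equivalences, so at this point the homotopy category of the $C$-model structure has been identified with the homotopy category of strictly-cofibrant quasi-cooperads and strict weak equivalences.

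It then remains to pass from strictly-cofibrant to arbitrary termwise-cofibrant quasi-cooperads. By Lemma \ref{lem:strictly-cofibrant-precooperad} the former are among the latter, giving an inclusion that preserves strict weak equivalences and hence descends to the localized categories. In the opposite direction I would use a functorial strict cofibrant replacement $Q \mapsto Q^{c}$ (available since the strict model structure is cofibrantly generated) with its natural strict weak equivalence $Q^{c} \weq Q$; the one point that needs an argument is that $Q^{c}$ is again a quasi-cooperad whenever $Q$ is, which I would get from a two-out-of-three argument on the naturality square for the maps $m_{T,i,U}$, using that a smash product of weak equivalences between cofibrant spectra is a weak equivalence together with the fact that $Q^{c}$ is termwise-cofibrant. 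The inclusion and the replacement functor then both preserve strict weak equivalences, and the natural weak equivalence $Q^{c} \weq Q$ exhibits them as mutually inverse after inverting strict weak equivalences, yielding the asserted equivalence. I expect that verification --- that strict cofibrant replacement preserves the class of quasi-cooperads --- to be the only non-formal point; the rest is standard model-category bookkeeping, with the key initial observation being that every object is $C$-fibrant.
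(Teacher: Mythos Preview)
Your proof is correct and follows essentially the same approach as the paper: identify the cofibrant-fibrant objects in the $C$-model structure as the strictly-cofibrant quasi-cooperads (using Proposition \ref{prop:cofibrant-precooperad} and the observation that all objects are fibrant), and then use Proposition \ref{prop:quasi-weq} to replace $C$-equivalences by strict weak equivalences on quasi-cooperads.

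Where you go beyond the paper is in the final step, passing from strictly-cofibrant to arbitrary termwise-cofibrant quasi-cooperads. The paper's proof simply records that strictly-cofibrant quasi-cooperads are termwise-cofibrant (Lemma \ref{lem:strictly-cofibrant-precooperad}) and leaves the rest implicit. You supply the missing argument explicitly, via functorial strict cofibrant replacement, and correctly isolate the only non-formal point: that the replacement $Q^c$ of a termwise-cofibrant quasi-cooperad $Q$ is again a quasi-cooperad. Your two-out-of-three argument for this is valid, since termwise-cofibrancy of both $Q$ and $Q^c$ ensures the smash of the replacement maps is a weak equivalence. So your version is a slight strengthening of the paper's argument rather than a different route.
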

\begin{proof}
The homotopy category of pre-cooperads is equivalent to that of the cofibrant-fibrant pre-cooperads, that is the strictly cofibrant quasi-cooperads. By Lemma \ref{lem:strictly-cofibrant-precooperad}, a strictly-cofibrant quasi-cooperad is termwise-cofibrant. For quasi-cooperads $C$-equivalences are always strict.
\end{proof}

\begin{remark} \label{rem:quasi-rigid}
Any quasi-cooperad $Q$ has a `rigidification', that is, is strictly weakly equivalent to an actual cooperad, namely $B\tilde{C}Q$. However, there is no reason to believe that morphisms in the homotopy category of quasi-cooperads can be realized by zigzags of maps of actual cooperads. For example, even when $Q$ is a cooperad, the equivalence between the cooperads $B\tilde{C}Q$ and $Q$ passes through the quasi-cooperad $W^cQ$.
\end{remark}

We conclude this section by noting that the various functors we have between operads and (pre-) cooperads are simplicial. In particular this means that they determine equivalences of derived mapping spaces, not just equivalences of homotopy categories.

\begin{lemma} \label{lem:BC-simplicial}
The functors $\mathbb{B}: \mathsf{Operad} \to \mathsf{PreCooperad}$, $B: \mathsf{Operad} \to \mathsf{PreCooperad}$ and $C: \mathsf{PreCooperad} \to \mathsf{Operad}$ are simplicial with respect to the standard simplicial structures on these categories. The adjunction between $\mathbb{B}$ and $C$ is simplicial in the sense that there are natural isomorphisms of simplicial sets
\[ \Hom_{\mathsf{Operad}}(P,CQ) \isom \Hom_{\mathsf{PreCooperad}}(\mathbb{B}P,Q). \]
for an operad $P$ and pre-cooperad $Q$.
\end{lemma}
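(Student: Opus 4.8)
The plan is to reduce the whole statement to one assertion: that the cobar construction $C$ commutes with the simplicial cotensor. Recall that the simplicial enrichments of $\mathsf{Operad}$ (from \cite[Appendix]{arone/ching:2009}) and of $\mathsf{PreCooperad}$ (Proposition \ref{prop:strict-precooperad}) have the property that their \emph{co}tensors are computed termwise in $\spectra$: for $K \in \sset$ there are natural isomorphisms $(P^K)(I) \isom \Map(K,P(I))$ for $|I| \geq 2$ and $(Q^K)(T) \isom \Map(K,Q(T))$ for $T \in \mathsf{Tree}$, where $\Map(K,-)$ is the cotensor of $\spectra$ from Definition \ref{def:cobar}, i.e.\ the right adjoint of $- \smsh K$; the tensors $P \otimes K$ and $Q \otimes K$ are then the corresponding left adjoints (these are not termwise, but we will not need a formula for them). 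Since an $n$-simplex of the simplicial mapping object $\Hom_{\mathcal{C}}(X,Y)$ is a morphism $X \to Y^{\Delta[n]_+}$, a functor that preserves cotensors is automatically simplicial, and a simplicial adjunction is one whose right adjoint preserves cotensors compatibly with the underlying adjunction.

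First I would prove that $C$ preserves cotensors, i.e.\ that there is a natural isomorphism $C(Q^K) \isom (CQ)^K$. Since $CQ(I) = \Map_{T \in \mathsf{T}(I)}(\bar{w}(T),Q(T))$ is built from the enriched mapping spectra $\Map(\bar{w}(T),-)$ and an end over $\mathsf{T}(I)$, and $\Map(K,-)$ is a right adjoint, it commutes with both: with the end because ends are limits, and with $\Map(\bar{w}(T),-)$ by the two-variable adjunction, which gives $\Map(\bar{w}(T),\Map(K,X)) \isom \Map(K,\Map(\bar{w}(T),X))$. Hence
\[ C(Q^K)(I) = \Map_{T}(\bar{w}(T),\Map(K,Q(T))) \isom \Map(K,\Map_{T}(\bar{w}(T),Q(T))) = (CQ)^K(I). \]
It then remains to check that this isomorphism respects the operad composition maps — here one uses that the maps $d^*$ of Remark \ref{rem:axiom} are compatible with the closed structure of $\spectra$ — and that it is natural in $K$ and $Q$ and compatible with iterated cotensors $(Q^K)^L \isom Q^{K \smsh L}$. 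Given this, $C$ is a simplicial functor, the enrichment map $\Hom_{\mathsf{PreCooperad}}(Q,Q') \to \Hom_{\mathsf{Operad}}(CQ,CQ')$ being $C$ followed by $C(Q'^{\Delta[n]_+}) \isom (CQ')^{\Delta[n]_+}$.

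Next, the simplicial adjunction: combining the previous step with Lemma \ref{lem:adjunction} gives, for each $n$, a natural bijection between maps of pre-cooperads $\mathbb{B}P \to Q^{\Delta[n]_+}$ and maps of operads $P \to C(Q^{\Delta[n]_+}) \isom (CQ)^{\Delta[n]_+}$, that is $\Hom_{\mathsf{PreCooperad}}(\mathbb{B}P,Q)_n \isom \Hom_{\mathsf{Operad}}(P,CQ)_n$; one checks that these respect the simplicial structure maps and are natural in $P$ and $Q$, yielding the stated isomorphism of simplicial sets. That $\mathbb{B}$ is simplicial then follows from the general fact that the left adjoint of a simplicially enriched adjunction is a simplicial functor — equivalently, $\mathbb{B}$ preserves the simplicial tensor, dually to the computation for $C$.

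It remains to treat the bar construction $B$, which is neither a left adjoint nor preserves cotensors on the nose, so I would argue directly. Using the identification $BP(T) \isom \bigl[ \Smsh_{t \in T}\bar{w}(U_t) \bigr] \smsh_{T \leq U} P(U)$ of Definition \ref{def:BC-W}, the object $BP(T)$ is assembled from smash products, the tensoring $\bar{w}(-) \smsh -$ of $\spectra$ over $\sset$, and a coend; inserting the maps $d^*$ of Remark \ref{rem:axiom} (for the smash products), the natural assembly maps $L \smsh \Map(K,X) \to \Map(K,L \smsh X)$ (for the tensoring), and the canonical comparison $\colim \Map(K,-) \to \Map(K,\colim(-))$ (for the coend) produces a natural transformation $B(P^K) \to (BP)^K$, which is all that is needed to make $B$ simplicial. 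The main obstacle is exactly this last point: one must verify that this web of $d^*$, assembly, and colimit-comparison maps is coherent — in particular that $B(P^K) \to (BP)^K$ is compatible with the iterated cotensors $((BP)^K)^L \isom (BP)^{K \smsh L}$ and with composition in the simplicial categories — so that it defines a genuine simplicial-functor structure on $B$ rather than merely a levelwise transformation. This coherence is a bookkeeping consequence of the unit and associativity properties of $d$ (hence $d^*$) recorded in Remark \ref{rem:axiom}; once it is verified, the lemma is complete.
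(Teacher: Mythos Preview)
Your approach is correct and is organized differently from the paper's. The paper leads with $B$: it writes down the comparison map $B\Map(X,P) \to \Map(X,BP)$ explicitly as the composite
\[ \bar{w}(T) \smsh_T \Smsh_{t}\Map(X,P(I_t)) \;\to\; \bar{w}(T) \smsh_T \Map(\Smsh_t X,P(T)) \;\to\; \bar{w}(T) \smsh_T \Map(X,P(T)) \;\to\; \Map(X,BP(I)), \]
highlighting the diagonal $X \to \Smsh_{t \in T} X$ as the essential ingredient, and then says the constructions for $\mathbb{B}$ and $C$ are ``virtually identical'' or ``similar''. You instead lead with $C$, observing that for $C$ the comparison map is actually an \emph{isomorphism} because $\Map(K,-)$ commutes on the nose with both $\Map(\bar w(T),-)$ and the end over $\mathsf{T}(I)$; you then get $\mathbb{B}$ and the simplicial adjunction formally, and treat $B$ last. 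Your organization is cleaner for $C$ and $\mathbb{B}$ and makes the simplicial adjunction immediate; the paper's ordering has the virtue of making the role of the diagonal visible from the start.

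One point to tighten: in your treatment of $B$, the maps $d^*$ alone take $\Smsh_t \Map(K,P(I_t))$ only as far as $\Map\bigl(\Smsh_t K,\,P(T)\bigr)$, not $\Map(K,P(T))$; you need the diagonal $K \to \Smsh_{t} K$ to finish. This is exactly the step the paper singles out (``using the diagonal on a pointed simplicial set''), and it is also what makes the cotensor $P^K$ an operad in the first place, so it enters the compatibility check you flag for $C$ as well. Once you insert the diagonal explicitly, the coherence you worry about for $B$ is precisely the coassociativity and counitality of the diagonal on $K$, together with the unit and associativity properties of $d$ from Remark~\ref{rem:axiom}.
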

\begin{proof}
To show this we take advantage of the fact that the simplicial \emph{cotensoring} in both the categories $\mathsf{Operad}$ and $\mathsf{PreCooperad}$ is done termwise using the diagonal on a pointed simplicial set. To show that $B$ is simplicial, it is sufficient to construct, for $X \in \sset$ and $P \in \mathsf{Operad}$, natural maps of pre-cooperads
\[ B\Map(X,P) \to \Map(X,BP), \]
that reduce to the identity when $X = S^0$. At a nonempty finite set $I$, we define such a map by
\[ \begin{split}
  \bar{w}(T) \smsh_{T \in \mathsf{T}(I)} \Smsh_{t \in T} \Map(X,P(I_t))
    &\to \bar{w}(T) \smsh_{T \in \mathsf{T}(I)} \Map(\Smsh_{t \in T} X,P(T)) \\
    &\to \bar{w}(T) \smsh_{T \in \mathsf{T}(I)} \Map(X,P(T)) \\
    &\to \Map(X,\bar{w}(T) \smsh_{T \in \mathsf{T}(I)} P(T))
\end{split} \]
where the first map smashes together the spectra $\Map(X,P(I_t))$, the second uses the diagonal $X \to \Smsh_{t \in T} X$, and the third comes from the appropriate adjunctions. The reader can check that these maps do indeed determine a natural map of pre-cooperads.

The construction is virtually identical for $\mathbb{B}$ and similar for $C$. The existence of the claimed isomorphism then follows from the form of the simplicial structures for $\mathbb{B}$ and $C$.
\end{proof}

\section{Koszul duality for termwise-finite operads} \label{sec:koszul}

The first description of bar-duality for operads of chain complexes was by Ginzburg and Kapranov in \cite{ginzburg/kapranov:1994}. They described this theory purely in terms of operads, using linear duality to avoid any mention of cooperads. It is convenient to have a corresponding description in the case of spectra.

To describe bar-duality using operads alone, we employ Spanier-Whitehead duality to convert cooperads into operads. This plays the role that linear duality does in the algebraic setting. Thus we define the `derived Koszul dual' $KP$ of an operad $P$ to be the operad formed by the Spanier-Whitehead dual of the cooperad $BP$. Our main task in this section is to establish that, subject to finiteness conditions, the double dual $K(K(P))$ is equivalent to $P$. We then also see that $K$ preserves simplicial enrichments, homotopy colimits and suitably finite homotopy limits.

\begin{remark} \label{rem:koszul}
It should be pointed out that `Koszul' dual is not really an appropriate name for what we are calling $KP$. As originally described by Priddy \cite{priddy:1970}, for algebras, and Ginzburg-Kapranov \cite{ginzburg/kapranov:1994}, for operads, the Koszul dual in the algebraic setting is a certain minimal model for the (dual of the) bar construction. It is much smaller than, but quasi-isomorphic to, the full bar construction and, for example, helps us write down explicit resolutions for algebras over operads. In \cite{ching:2005a} and \cite{arone/ching:2009} we, used the term `Koszul dual' to denote the Spanier-Whitehead dual of the bar construction, which is the analogue of Ginzburg-Kapranov's `dg-dual'. Blumberg and Mandell \cite{blumberg/mandell:2009} have referred to a similar notion for ring spectra as the `derived Koszul dual'. In the topological case, there does not seem to be any obvious analogue of the Priddy/Ginzburg-Kapranov definition of the Koszul dual. Since we do not have a better name, we continue this usage.
\end{remark}

\begin{definition}[Spanier-Whitehead dual of a symmetric sequence] \label{def:dual-symseq}
For a spectrum $X$ we write
\[ \dual X := \Map(X,S) \]
where $S$ is the sphere spectrum. Here $\Map(-,-)$ denotes the internal mapping object (that is, the closed monoidal structure) in $\spectra$.

For a symmetric sequence $A$ in $\spectra$, we write $\dual A$ for the symmetric sequence given by
\[ (\dual A)(I) := \dual (A(I)). \]
A bijection $\sigma:I \arrow{e,t}{\isom} I'$ determines $(\dual A)(I) \to (\dual A)(I')$ by way of the map $\sigma^{-1}_*: A(I') \to A(I)$. If $A$ is reduced, then so is $\dual A$. We refer to $\dual A$ as the \emph{Spanier-Whitehead dual of $A$}.
\end{definition}

For spectra $X,Y$ there is a natural map
\[ \delta^*: \Map(X,S) \smsh \Map(Y,S) \to \Map(X \smsh Y,S). \]
that is a weak equivalence if $X$ and $Y$ are \emph{finite}, that is, weakly equivalent to finite cell spectra, and cofibrant. We use this to observe that the Spanier-Whitehead dual of a cooperad is an operad, and hence define the Koszul dual.

\begin{definition}[Derived Koszul dual of an operad] \label{def:koszul}
Let $Q$ be a reduced cooperad. Then we define a reduced operad structure on the Spanier-Whitehead dual $\dual Q$ with composition maps
\[ \dual Q(J) \smsh \Smsh_{j \in J} \dual Q(I_j) \arrow{e,t}{m} \dual \left(Q(J) \smsh \Smsh_{j \in J} \Q(I_j)\right) \arrow{e} \dual Q(I) \]
where the second map is induced by the cooperad structure map $Q(I) \to Q(J) \smsh \Smsh_{j \in J} Q(I_j)$. This construction determines a functor
\[ \dual: \mathsf{Cooperad} \to \mathsf{Operad}^{op} \]

If $P$ is a reduced operad, we define the \emph{derived Koszul dual} of $P$ to be the reduced operad
\[ KP := \dual BP. \]
This gives us a functor
\[ K: \mathsf{Operad} \to \mathsf{Operad}^{op}. \]
\end{definition}

\begin{definition}[Spanier-Whitehead dual of an operad] \label{def:dual-operad}
The Spanier-Whitehead dual of an operad is not in general a cooperad, but, suitably interpreted, it is a pre-cooperad, and, under finiteness conditions, a quasi-cooperad. For an operad $P$, we define $\dual P: \mathsf{Tree} \to \spectra$ by
\[ (\dual P)(T) := \dual P(T). \]
This conflicts with Definitions \ref{def:dual-symseq} and \ref{def:A(T)} in the sense that $\dual P$ has already been defined as a symmetric sequence and this definition of $(\dual P)(T)$ does not agree with that of \ref{def:A(T)}. We hope to avoid confusion on this point.

We make $\dual P$ into a pre-cooperad with structure maps
\[ \dual P(T) \smsh \dual P(U) \arrow{e,t}{m} \dual (P(T) \smsh P(U)) \isom \dual P(T \cup_i U). \]
This construction gives us a functor
\[ \dual: \mathsf{Operad} \to \mathsf{PreCooperad}^{op}. \]
If $P$ is a termwise-finite-cofibrant operad (see \ref{def:termwise-finite} below), then $P(T)$ is a finite-cofibrant spectrum for all $T$ and $\dual P$ is a quasi-cooperad.
\end{definition}

\begin{lemma} \label{lem:KP}
There is a natural isomorphism of operads
\[ KP \isom C \dual P. \]
\end{lemma}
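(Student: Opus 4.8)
The plan is to unwind both sides of the claimed isomorphism and observe that they are literally the same end. Recall from Definition \ref{def:koszul} that $KP = \dual BP$, and from Definition \ref{def:dual-symseq} that the Spanier-Whitehead dual of a symmetric sequence is taken termwise, so
\[ KP(I) = \dual(BP(I)) = \Map\left(BP(I),S\right) = \Map\left(\bar{w}(T) \smsh_{T \in \mathsf{T}(I)} P(T), \; S\right). \]
On the other hand, by Definition \ref{def:dual-operad} the pre-cooperad $\dual P$ has $(\dual P)(T) = \dual P(T) = \Map(P(T),S)$, and applying the cobar construction for pre-cooperads (Definition \ref{def:cobar-precooperad}) gives
\[ (C\dual P)(I) = \Map_{T \in \mathsf{T}(I)}\left(\bar{w}(T), (\dual P)(T)\right) = \Map_{T \in \mathsf{T}(I)}\left(\bar{w}(T), \Map(P(T),S)\right). \]

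First I would invoke the standard adjunction isomorphism $\Map(K \smsh X, Y) \isom \Map(K, \Map(X,Y))$ relating the tensoring and cotensoring of $\spectra$ over $\sset$ together with the closed monoidal structure of $\spectra$. Applied termwise in $T$, this turns each term $\Map(\bar{w}(T) \smsh P(T), S)$ into $\Map(\bar{w}(T), \Map(P(T),S))$. The key point is that duality converts the coend defining $BP(I)$ into the end defining $(C\dual P)(I)$: since $\Map(-,S)$ is a contravariant functor taking colimits to limits, $\dual\left(\bar{w}(T) \smsh_{T} P(T)\right) \isom \Map_{T}\left(\bar{w}(T), \dual P(T)\right)$, with the coend over $\mathsf{T}(I)$ becoming an end over $\mathsf{T}(I)$. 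This is where one uses that $\bar{w}(-)$ is a functor $\mathsf{T}(I) \to \sset$ and $P(-)$ a functor $\mathsf{T}(I)^{op} \to \spectra$, so that $\bar{w}(-) \smsh P(-)$ is a functor on the twisted arrow / product suitable for forming the coend, whose dual is the corresponding end.

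Next I would check that this termwise isomorphism is compatible with the symmetric group actions (equivalently, with the relabelling isomorphisms $\sigma_*$), which is immediate since both constructions distribute the $\sigma$-action over the indexing categories $\mathsf{T}(I)$ in the same way, and with the operad structure maps. For the latter, the operad structure on $C\dual P$ combines the maps $\nu_{T,i,U}$ (Definition \ref{def:bar}) with the pre-cooperad structure maps $\dual P(T) \smsh \dual P(U) \to \dual P(T \cup_i U)$, which are built from the duality comparison $m = \delta^*$; the operad structure on $KP = \dual BP$ is, by Definition \ref{def:koszul}, the dual of the cooperad structure on $BP$, which itself combines $\nu_{T,i,U}$ with the isomorphisms $P(T \cup_i U) \isom P(T) \smsh P(U)$ and the maps $d$. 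Dualizing, the $d$'s and the coend-structure maps pass through the same adjunctions, and one verifies the resulting diagram commutes.

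I expect the only real obstacle to be bookkeeping: making sure that the directions of all the maps $d$, $d^*$, $\delta^*$, $m$, the reversals $\bar{w}(T)^{\mathsf{rev}}$ versus $\bar{w}(T)$ (recall Remark \ref{rem:reverse} that the reversal does not affect the tensoring, so it can be suppressed here, which is why the statement uses $\bar{w}(T)$ rather than $\bar{w}(T)^{\mathsf{rev}}$), and the variance of the indexing categories all line up so that the termwise adjunction isomorphisms assemble into a single isomorphism of operads rather than merely a natural weak equivalence. Since every map involved on both sides is an honest isomorphism at this formal level — the adjunction isomorphism of a closed monoidal/tensored-cotensored structure and the universal property of (co)ends — no homotopical input is needed, and the naturality in $P$ follows because each ingredient ($\dual$, $\Map$, the (co)end) is functorial in $P$. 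Thus the statement is essentially a formal consequence of the definitions, and I would present it as: apply the hom-tensor adjunction termwise, identify the dual of the $BP$-coend with the $\dual P$-end, and check compatibility with $\Sigma_I$-actions and operad composition.
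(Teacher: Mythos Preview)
Your proposal is correct and follows exactly the same approach as the paper: the paper's proof consists of the single displayed isomorphism
\[ \Map(\bar{w}(T) \smsh_{T \in \mathsf{T}(I)} P(T),S) \isom \Map_{T \in \mathsf{T}(I)}(\bar{w}(T),\Map(P(T),S)) \]
``constructed from the usual adjunctions,'' which is precisely your hom-tensor/coend-to-end argument; you have simply spelled out the bookkeeping (symmetric actions, operad structure, reversals) that the paper leaves implicit.
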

\begin{proof}
This is a natural isomorphism
\[ \Map(\bar{w}(T) \smsh_{T \in \mathsf{T}(I)} P(T),S) \isom \Map_{T \in \mathsf{T}(I)}(\bar{w}(T),\Map(P(T),S)) \]
constructed from the usual adjunctions.
\end{proof}

\begin{remark}
The dual of a cooperad is an operad and the dual of an operad is a pre-cooperad, but we do not have dualizing functors in both directions between the same categories -- one cannot put an operad structure on the dual of an arbitrary pre-cooperad. Of course, we know that any pre-cooperad $Q$ is $C$-weakly equivalent to the cooperad $B\tilde{C}Q$ so the operad $\dual B \tilde{C}Q$ plays the role of the dual of $Q$.
\end{remark}

We now introduce the finiteness conditions that tell us when a double dual recovers the original object.

\begin{definition}[Termwise-finite operads] \label{def:termwise-finite}
We say that the symmetric sequence $A$ is \emph{termwise-finite} if, for each $I$, $A(I)$ is weakly equivalent to a finite cell $S$-module. An operad or cooperad is \emph{termwise-finite} if its underlying symmetric sequence is. Since it comes up a lot, we say that a symmetric sequence (or operad or cooperad) is \emph{termwise-finite-cofibrant} if it is both termwise-finite and termwise-cofibrant.
\end{definition}

\begin{definition}[Map from a cooperad to its double dual] \label{def:double-dual}
Let $Q$ be a cooperad. Then there is a natural map of pre-cooperads
\[ Q \to \dual \dual Q \]
defined as follows. For an $I$-tree $T$, we need to give a map
\[ Q(T) \to \Map \left(\Smsh_{t \in T} \Map(Q(I_t),S),S\right). \]
This is adjoint to the map
\[ \Smsh_{t \in T} \Map(Q(I_t),S) \to \Map(\Smsh_{t \in T} Q(I_t),S) \isom \Map(Q(T),S) \]
that smashes together the Spanier-Whitehead duals. Combining with a cofibrant replacement for the operad $\dual Q$, we get a map of pre-cooperads
\[ d: Q \to \dual \tilde{\dual}Q. \]
\end{definition}

\begin{lemma} \label{lem:double-dual}
Let $Q$ be a termwise-finite-cofibrant cooperad. Then the map
\[ d: Q \to \dual \tilde{\dual} Q \]
of Definition \ref{def:double-dual} is a strict weak equivalence of pre-cooperads.
\end{lemma}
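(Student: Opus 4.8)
The plan is to reduce the statement about pre-cooperads to a termwise statement about spectra, then invoke the standard fact that Spanier-Whitehead duality is an equivalence on finite cofibrant spectra. First I would observe that, since both weak equivalences and (the relevant notion of) strict equivalences of pre-cooperads are detected termwise on the functors $\mathsf{Tree} \to \spectra$, it suffices to show that for each $I$-tree $T$ the composite
\[ Q(T) \to \Map\!\left(\Smsh_{t \in T} \Map(Q(I_t),S),\, S\right) \to \Map\!\left(\Smsh_{t \in T} \Map(\tilde{\dual}Q(I_t),S)^{-1}\!,\, S\right) \]
is a weak equivalence of spectra, where the cofibrant replacement $\tilde{\dual}Q \to \dual Q$ has already been incorporated. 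Since $\tilde{\dual}Q \weq \dual Q$ is a termwise weak equivalence, and since $\Map(-,S)$ preserves weak equivalences between cofibrant spectra (all objects of $\spectra$ being fibrant), it is enough to treat the underlying map $d_T : Q(T) \to \dual\dual Q(T)$ built from $Q$ itself.

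Next I would unwind $d_T$. By Definition \ref{def:A(T)}, $Q(T) = \Smsh_{t \in T} Q(I_t)$ and $\dual Q(T) = \Smsh_{t \in T}\dual Q(I_t)$, so $d_T$ is adjoint to the iterated smash of the duality-comparison maps $\delta^* : \dual X \smsh \dual Y \to \dual(X \smsh Y)$, namely to
\[ \Smsh_{t \in T} \dual Q(I_t) \xrightarrow{\ \delta^*\ } \dual\!\left(\Smsh_{t \in T} Q(I_t)\right) = \dual Q(T). \]
Because $Q$ is termwise-finite-cofibrant, each $Q(I_t)$ is a finite cofibrant spectrum, hence so is the finite smash product $Q(T)$ (finiteness and cofibrancy are closed under smash product). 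Therefore the iterated $\delta^*$ above is a weak equivalence, and dually $d_T$, being adjoint to it, is the canonical map into the double Spanier-Whitehead dual of the finite cofibrant spectrum $Q(T)$, which is a weak equivalence by the standard properties of $\dual$ recalled before Definition \ref{def:koszul}. (One should note here that the map in Definition \ref{def:double-dual} is literally the adjoint of this iterated $\delta^*$, precomposed with nothing and postcomposed with $\dual$ of the cofibrant-replacement map; both ingredients are equivalences on finite cofibrant spectra.)

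Finally I would assemble these termwise equivalences into the conclusion: since $d$ is a morphism of pre-cooperads and each $d_T$ is a weak equivalence of spectra, $d$ is a strict weak equivalence of pre-cooperads by definition of the strict model structure (Proposition \ref{prop:strict-precooperad}). I expect the only genuinely delicate point to be bookkeeping around the cofibrant replacement $\tilde{\dual}Q$: one must check that replacing $\dual Q$ by a cofibrant operad does not disturb termwise finiteness, which follows because the replacement map is a termwise weak equivalence and $\dual Q$ is termwise-finite (as $Q$ is termwise-finite-cofibrant, $\dual Q(I)$ is finite), so $\tilde{\dual}Q$ is termwise-finite as well; combined with cofibrancy of the replacement this keeps every term finite and cofibrant. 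Everything else is a routine transcription of the fact that finite Spanier-Whitehead duality is an equivalence, applied levelwise over $\mathsf{Tree}$.
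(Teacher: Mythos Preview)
Your proposal is correct and follows essentially the same approach as the paper: reduce to a termwise statement, then use that the comparison map $\delta^*$ and the canonical double-dual map are both weak equivalences on finite cofibrant spectra. The paper organizes this as a commutative triangle with vertices $Q(T)$, $\dual\bigl(\Smsh_{t}\tilde{\dual}Q(I_t)\bigr)$, and $\dual\tilde{\dual}Q(T)$; your factorization is the same triangle read slightly differently. One small imprecision: you write that $d_T$ ``is the canonical map into the double Spanier-Whitehead dual of $Q(T)$,'' but its target is $\dual\bigl(\Smsh_t \dual Q(I_t)\bigr)$, not $\dual\dual Q(T)$; what is true is that $d_T$ factors as the canonical double-dual map followed by $\dual(\delta^*)$, and both factors are equivalences under the stated hypotheses---which is exactly the paper's triangle.
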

\begin{proof}
We have a commutative diagram
\[ \begin{diagram}
  \node{Q(T)} \arrow{e,t}{d_T} \arrow{se,b}{\sim} \node{\Map\left(\Smsh_{t \in T} \tilde{\Map}(Q(I_t),S),S\right)} \arrow{s,r}{\sim} \\
  \node[2]{\Map(\tilde{\Map}(Q(T),S),S)}
\end{diagram} \]
where the tildes denote cofibrant replacement. The right-hand vertical map is an equivalence when each $Q(I_t)$ is finite-cofibrant, and the diagonal map is an equivalence when $Q(T)$ is finite-cofibrant. We deduce that $d_T$ is a weak equivalence as required.
\end{proof}

\begin{prop} \label{prop:CB-KK}
Let $P$ be a termwise-finite-cofibrant operad. Then there is a natural weak equivalence of operads
\[ d: CBP \to K \tilde{K}P \]
where $\tilde{K}P$ denotes a termwise-cofibrant replacement for the operad $KP$.
\end{prop}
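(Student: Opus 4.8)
The plan is to exhibit $d$ as the cobar construction applied to the double-dualization map of pre-cooperads from Definition \ref{def:double-dual}, and then to deduce that it is a weak equivalence from Lemma \ref{lem:double-dual} together with the fact that $C$ carries strict weak equivalences of pre-cooperads to weak equivalences of operads.

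First I would build the map. Applying Definition \ref{def:double-dual} with the cooperad $Q = BP$ --- noting that $\dual BP = KP$, so that $\tilde{\dual}BP$ is exactly a termwise-cofibrant replacement $\tilde{K}P$ --- gives a natural map of pre-cooperads $d \colon BP \to \dual\tilde{K}P$. Applying $C$ and postcomposing with the natural isomorphism $C\dual\tilde{K}P \isom K\tilde{K}P$ of Lemma \ref{lem:KP} (applied to the operad $\tilde{K}P$) produces the claimed natural map $d \colon CBP \to K\tilde{K}P$. Naturality in $P$ is inherited from that of the double-dual map, of Lemma \ref{lem:KP}, and of the replacement $\tilde{K}P \to KP$.

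Next I would verify the hypothesis needed to apply Lemma \ref{lem:double-dual}, namely that $BP$ is a termwise-finite-cofibrant cooperad. It is a cooperad by Definition \ref{def:bar}. For $|I| \ge 2$ there are only finitely many isomorphism classes of $I$-trees, each $\bar{w}(T)$ is a finite pointed simplicial set, and each $P(T) = \Smsh_{t \in T} P(I_t)$ is a finite cofibrant spectrum, since every internal vertex $t$ has $|I_t| \ge 2$ and $P$ is termwise-finite-cofibrant. Because $\bar{w}(-)$ is a Reedy cofibrant $\mathsf{T}(I)$-diagram (as shown in the proof of Proposition \ref{prop:bar-cobar-homotopy}), the coend $BP(I) = \bar{w}(T) \smsh_{T \in \mathsf{T}(I)} P(T)$ is built from finitely many finite cofibrant spectra by finitely many cell attachments, hence is itself finite-cofibrant. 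So $BP$ is termwise-finite-cofibrant, and Lemma \ref{lem:double-dual} shows that $d \colon BP \to \dual\tilde{K}P$ is a strict weak equivalence of pre-cooperads.

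Finally, since strict weak equivalences of pre-cooperads are $C$-equivalences (as observed at the start of the proof of Proposition \ref{prop:quasi-weq}; concretely, a strict weak equivalence restricts to an objectwise weak equivalence of the automatically Reedy fibrant $\mathsf{T}(I)$-diagrams $Q(-)$, which $\Map_{T}(\bar{w}(T),-)$ preserves because $\bar{w}(-)$ is Reedy cofibrant, by \cite[18.4.13]{hirschhorn:2003}), applying $C$ to $d \colon BP \to \dual\tilde{K}P$ yields a weak equivalence of operads; combined with Lemma \ref{lem:KP} this gives that $d \colon CBP \to K\tilde{K}P$ is a weak equivalence, as required. I expect no real obstacle here: all the substance is in the already-established Lemma \ref{lem:double-dual}, which itself rests on $\delta^*$ being a weak equivalence between finite cofibrant spectra, and the only step requiring any care is confirming that $BP$ is termwise-finite-cofibrant, which is routine.
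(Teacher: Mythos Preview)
Your proposal is correct and follows essentially the same approach as the paper: both apply Lemma~\ref{lem:double-dual} to $Q = BP$ to obtain a strict weak equivalence $BP \to \dual\tilde{K}P$, then apply $C$ and use Lemma~\ref{lem:KP}. The only difference is that the paper cites \cite[11.14]{arone/ching:2009} for the fact that $BP$ is termwise-finite-cofibrant, while you supply a direct argument; your argument is fine and the extra care you take in justifying why $C$ preserves strict weak equivalences is likewise unnecessary but correct.
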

\begin{proof}
By \cite[11.14]{arone/ching:2009}, $BP$ is a termwise-finite-cofibrant cooperad. Applying Lemma \ref{lem:double-dual} we get a strict weak equivalence
\[ BP \weq \dual \tilde{\dual}BP. \]
Applying the cobar construction, we get a weak equivalence of operads
\[ CBP \to C \dual \tilde{K}P \isom K\tilde{K}P \]
where the last isomorphism is from Lemma \ref{lem:KP}.
\end{proof}

\begin{theorem} \label{thm:KK}
For a termwise-finite-cofibrant operad $P$, there is a natural zigzag of equivalences of operads
\[ P \homeq K\tilde{K}P. \]
\end{theorem}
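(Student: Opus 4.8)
The plan is simply to concatenate the weak equivalences of operads already established in the paper. Since a termwise-finite-cofibrant operad is in particular termwise-cofibrant, all the hypotheses needed below are met. First I would assemble the zigzag
\[ P \lweq WP \weq CBP \weq K\tilde{K}P. \]
Here the leftward arrow is the map $\eta: WP \to P$ of Definition \ref{def:W-map}, which is a weak equivalence of operads by Lemma \ref{lem:WP-P}. The middle arrow is the map $\theta: WP \to CBP$ of Definition \ref{def:W-CB}, which is a weak equivalence of operads by Theorem \ref{thm:W-CB} (this is where termwise-cofibrancy of $P$ is used). The rightward arrow is the map $d: CBP \to K\tilde{K}P$ of Proposition \ref{prop:CB-KK}, which is a weak equivalence of operads precisely under the hypothesis that $P$ is termwise-finite-cofibrant (via Lemma \ref{lem:double-dual} applied to the termwise-finite-cofibrant cooperad $BP$, together with Lemma \ref{lem:KP}).

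It then remains only to observe naturality. The maps $\eta$, $\theta$ and $d$ are all natural in $P$ by construction, and the termwise-cofibrant replacement $P \mapsto \tilde{K}P$, together with the structure map $\tilde{K}P \to KP$, is natural as noted in the discussion of termwise-cofibrant replacements of operads. Hence the displayed zigzag of weak equivalences is natural in $P$, which is exactly the assertion of the theorem.

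I do not expect a genuine obstacle at this stage: the real content is carried entirely by Theorem \ref{thm:W-CB} and Proposition \ref{prop:CB-KK}. The only thing requiring a little care is bookkeeping — tracking which cofibrancy/finiteness hypothesis each of the three input equivalences requires, and verifying that ``termwise-finite-cofibrant'' supplies all of them simultaneously.
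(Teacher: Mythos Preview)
Your proposal is correct and matches the paper's own proof essentially verbatim: the paper simply cites Proposition~\ref{prop:CB-KK} and Theorem~\ref{thm:W-CB}, which unpacks to exactly the zigzag $P \lweq WP \weq CBP \weq K\tilde{K}P$ you wrote down. Your additional remarks on naturality and on which hypothesis each step consumes are accurate and make the argument more explicit than the paper's one-line version.
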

\begin{proof}
This follows from Proposition \ref{prop:CB-KK} and Theorem \ref{thm:W-CB}.
\end{proof}

\begin{cor}
The derived Koszul dual construction determines a contravariant equivalence between the homotopy category of termwise-finite-cofibrant reduced operads and itself.
\end{cor}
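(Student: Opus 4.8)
The plan is to show that the homotopy-invariant version of $K$ is its own contravariant inverse, so that the corollary follows formally from Theorem \ref{thm:KK}.

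First I would verify that $K$ descends to a functor on the homotopy category of termwise-finite-cofibrant reduced operads. Given a weak equivalence $f\colon P \to P'$ of such operads, Proposition \ref{prop:bar-cobar-homotopy} together with \cite[11.14]{arone/ching:2009} shows that $Bf\colon BP \to BP'$ is a weak equivalence of termwise-finite-cofibrant cooperads; applying $\dual$ termwise --- which preserves weak equivalences between finite cofibrant spectra --- shows that $Kf = \dual Bf$ is a weak equivalence. Since $KP = \dual BP$ is termwise-finite but not necessarily termwise-cofibrant, I would replace it by a functorial termwise-cofibrant replacement $\tilde{K}P$, which remains termwise-finite. The assignment $P \mapsto \tilde{K}P$ then determines a well-defined functor $\mathbb{K}$ from the homotopy category of termwise-finite-cofibrant reduced operads to its opposite.

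Next, Theorem \ref{thm:KK} supplies, for each termwise-finite-cofibrant operad $P$, a natural zigzag of equivalences $P \homeq K\tilde{K}P$. Passing to the homotopy category, this becomes a natural isomorphism $\mathrm{Id} \cong \mathbb{K}\circ\mathbb{K}$, the two contravariances cancelling. Because the double Spanier--Whitehead dual construction is symmetric, reading $\mathbb{K}$ as a functor out of the opposite category and applying the same zigzag yields the other unit/counit isomorphism. Hence $\mathbb{K}$, together with its opposite, are mutually inverse up to natural isomorphism, and so $\mathbb{K}$ is the desired contravariant self-equivalence of the homotopy category.

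The only step requiring genuine care is the bookkeeping: confirming that $\tilde{K}P$ stays within the class of termwise-finite-cofibrant operads so that $K$ can be iterated, and that the zigzag in Theorem \ref{thm:KK} is sufficiently natural to descend to an honest natural isomorphism of functors on the homotopy category. There is no new homotopy-theoretic input beyond Theorem \ref{thm:KK}.
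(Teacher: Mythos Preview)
Your proposal is correct and matches the paper's intent: the corollary is stated without proof in the paper, as an immediate consequence of Theorem~\ref{thm:KK}, and you have simply spelled out the routine verification that $\tilde{K}$ descends to the homotopy category, lands in termwise-finite-cofibrant operads, and is its own quasi-inverse via the natural zigzag of Theorem~\ref{thm:KK}. One minor simplification: since you are showing $\mathbb{K}$ is its own inverse, the single natural isomorphism $\mathbb{K}\circ\mathbb{K}\cong\mathrm{Id}$ already exhibits $\mathbb{K}$ as a contravariant self-equivalence, so the appeal to symmetry of the double dual for a ``second'' unit/counit is unnecessary.
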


\begin{remark} \label{rem:dg-dual}
In Remark \ref{rem:Berger-Moerdijk} we noted that our definition of the bar construction corresponds exactly to the bar-cooperad for an operad $P$ of chain complexes, as described by Getzler and Jones \cite[\S2]{getzler/jones:1994}. Using a linear dual in place of the Spanier-Whitehead dual, our definition of $KP$ similarly corresponds exactly to the `dg-dual' $\mathbf{D}(P)$ of Ginzburg and Kapranov \cite[\S3]{ginzburg/kapranov:1994}.

Our construction of maps of operads $WP \to CBP \to KKP$ then correspond to maps
\[ WP \to CBP \to \mathbf{D}(\mathbf{D}(P)). \]
As we saw in Remark \ref{rem:Berger-Moerdijk}, the map $WP \to CBP$ is an isomorphism of operads of chain complexes, and the map $CBP \to \mathbf{D}\mathbf{D}P$ is an isomorphism as long as the terms in $P$ are finite-dimensional. Thus the zigzag of equivalences in Theorem \ref{thm:KK} reduces, in the algebraic case, to a single quasi-isomorphism
\[ \eta: \mathbf{D}(\mathbf{D}(P)) \to P \]
which, up to signs, is the same as that described in \cite[3.2.16]{ginzburg/kapranov:1994}.
\end{remark}

\begin{example} \label{ex:K-free-trivial}
It follows from work done in the proof of Lemma \ref{lem:omega-sigma} that trivial and free operads are Koszul dual to each other. For example, if $A$ is a termwise-finite-cofibrant symmetric sequence with the trivial operad structure, then $KA$ is weakly equivalent to the free operad on the symmetric sequence $\dual \Sigma A$. Conversely, the Koszul dual of the free operad on $A$ is weakly equivalent to the trivial operad $\Omega \dual A$.
\end{example}

We conclude this section by noting that Koszul duals preserves simplicial mapping spaces of operads, as well as homotopy colimits and finite homotopy limits.

\begin{thm} \label{thm:K-simplicial}
Suppose that $P$ and $P'$ are termwise-finite operads. Then there is a weak equivalence
\[ k_{P,P'}: \widetilde{\Hom}_{\mathsf{Operad}}(P,P') \weq \widetilde{\Hom}_{\mathsf{Operad}}(KP',KP) \]
where $\widetilde{\Hom}_{\mathsf{Operad}}(-,-)$ denotes the derived mapping space in the simplicial model category of operads.
\end{thm}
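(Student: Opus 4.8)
The idea is to factor $K=\dual\circ B$ (Definition \ref{def:koszul}) and handle the two pieces separately: the bar construction is controlled by the Quillen equivalence of Theorem \ref{thm:Quillen-equiv}, while Spanier--Whitehead dualization will be treated as a simplicial self-equivalence of the homotopy theory of termwise-finite-cofibrant objects. Since derived mapping spaces are invariant under weak equivalence, I may assume $P$ and $P'$ are termwise-finite-cofibrant: both sides of the claimed equivalence are unchanged under termwise-cofibrant replacement, using for the right-hand side that $B$ takes weak equivalences of termwise-cofibrant operads to weak equivalences of termwise-finite-cofibrant cooperads (by \cite[11.14]{arone/ching:2009}) and that $\dual$ preserves weak equivalences of cofibrant spectra.

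\textbf{Step 1: pass to pre-cooperads.} By Lemma \ref{lem:KP} there is a natural isomorphism $KP\isom C\dual P$, where $\dual P$ is the pre-cooperad $T\mapsto\dual(P(T))$ of Definition \ref{def:dual-operad}; as $P$ is termwise-finite-cofibrant, $\dual P$ is a quasi-cooperad. The cobar construction $C$ is simplicial (Lemma \ref{lem:BC-simplicial}), is the right adjoint of a Quillen equivalence (Theorem \ref{thm:Quillen-equiv}), and preserves all weak equivalences, so $\mathbb{R}C$ agrees with $C$. A simplicial Quillen equivalence induces weak equivalences of derived mapping spaces, so
\[ \widetilde{\Hom}_{\mathsf{Operad}}(KP',KP)\homeq\widetilde{\Hom}_{\mathsf{PreCooperad}}(\dual P',\dual P). \]
It therefore suffices to produce a natural weak equivalence, which I will call $(\dagger)$,
\[ \widetilde{\Hom}_{\mathsf{PreCooperad}}(\dual P',\dual P)\homeq\widetilde{\Hom}_{\mathsf{Operad}}(P,P'); \]
in other words, to show the contravariant functor $\dual:\mathsf{Operad}\to\mathsf{PreCooperad}^{op}$ of Definition \ref{def:dual-operad} induces weak equivalences of derived mapping spaces on termwise-finite-cofibrant operads.

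\textbf{Step 2: Spanier--Whitehead duality on mapping spaces.} For $(\dagger)$ I would exhibit a simplicial homotopy inverse to $\dual$ and identify the two round-trips with equivalences already in hand. The functor $\dual$ is simplicial because the internal mapping object in $\spectra$ sends tensoring to cotensoring and the structure maps are dualized termwise. A homotopy inverse sends a termwise-finite-cofibrant operad $A$ to the cooperad $B\tilde{C}(\dual A)$, the rigidification (Corollary \ref{cor:BC-W}) of the quasi-cooperad $\dual A$; by Lemma \ref{lem:KP} this is $B\tilde{K}A$, and its Spanier--Whitehead dual as a cooperad is $K\tilde{K}A$. Going once around from $P$ thus yields $\dual(B\tilde{C}(\dual P))\isom K\tilde{K}P$, which is naturally equivalent to $P$ by Theorem \ref{thm:KK}. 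Going once around from a quasi-cooperad of the form $\dual P$ yields, after rigidifying, the termwise-finite-cofibrant cooperad $B\tilde{C}(\dual P)\isom B\tilde{K}P$, then its dual operad $K\tilde{K}P$, then the pre-cooperad $\dual(K\tilde{K}P)$; Lemma \ref{lem:double-dual} applied to the cooperad $B\tilde{K}P$ identifies this with $B\tilde{K}P$ up to strict weak equivalence, and Corollary \ref{cor:BC-W} shows $B\tilde{K}P\homeq\dual P$. As $\dual$, $B$, $C$ and functorial termwise-cofibrant replacement are all simplicial, these natural weak equivalences of (zig-zags of) functors upgrade to weak equivalences of derived mapping spaces, giving $(\dagger)$ and hence the theorem.

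\textbf{Main obstacle.} The delicate point is exactly this last upgrade, from ``$\dual$ is a homotopy self-equivalence'' to ``$\dual$ induces weak equivalences of derived mapping spaces.'' Two features keep it from being formal. First, Spanier--Whitehead duality is homotopically well-behaved only on termwise-finite-cofibrant objects, so one must track that every intermediate object ($\tilde{C}\dual P=\tilde{K}P$, $B\tilde{K}P$, and all cofibrant/fibrant replacements) stays termwise-finite-cofibrant, which relies on \cite[11.14]{arone/ching:2009} together with the preservation properties of $B$, $C$ and $\dual$. Second, and more seriously, the dual of an operad is only a pre-cooperad, so the double dual of an operad returns to $\mathsf{Operad}$ only after the rigidification $B\tilde{C}$; consequently the round-trip $P\mapsto K\tilde{K}P\homeq P$ is not formal but is precisely where the hard input of the paper --- the equivalence $\theta:WP\weq CBP$ of Theorem \ref{thm:W-CB}, via Theorem \ref{thm:KK} --- is used. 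One way to avoid invoking an abstract enrichment principle for the harder half is to compute both sides of $(\dagger)$ from cofibrant replacements built out of free pre-cooperads, over which the comparison map is induced termwise by the Spanier--Whitehead duality weak equivalence $\dual X\smsh\dual Y\weq\dual(X\smsh Y)$ for finite cofibrant spectra $X,Y$.
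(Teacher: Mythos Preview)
Your approach is correct in outline but takes a genuinely different and more elaborate route than the paper. You factor $K$ as $\dual\circ B$, pass through pre-cooperads via the Quillen equivalence of Theorem~\ref{thm:Quillen-equiv}, and then try to show that $\dual:\mathsf{Operad}\to\mathsf{PreCooperad}^{op}$ is a simplicial self-duality on termwise-finite-cofibrant objects. The paper instead works entirely inside $\mathsf{Operad}$: since $B$, $C$ and $\dual$ are all simplicial, so is $K$, and hence there is an actual map $k_{P,P'}$. One then observes that the composite $k_{KP',KP}\circ k_{P,P'}$ is, up to the equivalences $CBP\weq K\tilde{K}P$ of Proposition~\ref{prop:CB-KK}, just the map on mapping spaces induced by the natural equivalences $P\homeq CBP$ and $P'\homeq CBP'$ from Theorem~\ref{thm:W-CB}. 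So $k_{P,P'}$ has a left inverse; applying the same argument with $(P,P')$ replaced by $(KP',KP)$ shows $k_{KP',KP}$ also has a left inverse, and one concludes formally.

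The advantage of the paper's argument is that it completely sidesteps your ``main obstacle.'' You need to promote the zigzag of natural weak equivalences exhibiting $\dual$ as a homotopy involution to an equivalence of derived mapping spaces, and as you note this is not formal: it requires tracking simplicial functoriality through cofibrant replacements and the rigidification $B\tilde{C}$. The paper avoids this by never leaving the category of operads and never needing an inverse \emph{functor} to $K$; the single natural zigzag $P\homeq K\tilde{K}P$ (Theorem~\ref{thm:KK}) is enough to show that applying $K$ twice on mapping spaces is an equivalence, and a map with a two-sided inverse up to equivalence is itself an equivalence. Your detour through $\mathsf{PreCooperad}$ buys nothing here, since the hard input (Theorem~\ref{thm:KK}) is the same in both arguments.
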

\begin{proof}
We saw in Lemma \ref{lem:BC-simplicial} that the bar and cobar functors are simplicial. The Spanier-Whitehead dual construction is also simplicial so the Koszul dual functor $K$ induces a morphism $k_{P,P'}$ as claimed. To see that this is a weak equivalence, we consider the following diagram (in the homotopy category of simplicial sets):
\[ \begin{diagram}
  \node{\widetilde{\Hom}_{\mathsf{Operad}}(P,P')} \arrow[2]{e,t}{k_{P,P'}} \arrow{s,l}{\sim} \node[2]{\widetilde{\Hom}_{\mathsf{Operad}}(KP',KP)} \arrow{s,r}{k_{KP',KP}} \\
  \node{\widetilde{\Hom}_{\mathsf{Operad}}(CBP,CBP')} \arrow{se,b}{\sim} \node[2]{\widetilde{\Hom}_{\mathsf{Operad}}(K\tilde{K}P,K\tilde{K}P')} \arrow{sw,b}{\sim} \\
  \node[2]{\widetilde{\Hom}_{\mathsf{Operad}}(CBP,K\tilde{K}P')}
\end{diagram} \]
Here the diagonal maps are induced by the equivalences $CBP' \to K\tilde{K}P'$ and $CBP \to K\tilde{K}P$ respectively, and the left-hand vertical map by the equivalences $P \homeq CBP$ and $P' \homeq CBP'$. Showing that the above diagram commutes amounts to checking that our natural maps $WP \to CBP \to KKP$ respect the simplicial structures on these functors. This is true and it shows that $k_{P,P'}$ is a right inverse to $k_{KP',KP}$. Replacing $P$ with $KP'$ and $P'$ with $KP$, we also see that $k_{KP',KP}$ has a left inverse. Hence all the maps in the above diagram are isomorphisms in the homotopy category.
\end{proof}

\begin{thm} \label{thm:K-hocolim}
Let $\{P_\alpha\}$ be a diagram of cofibrant reduced operads. Then the natural map
\[ K(\hocolim_{\alpha} P_\alpha) \to \holim_{\alpha} KP_{\alpha} \]
is a weak equivalence of operads. If the diagram is finite and each $P_\alpha$ is a termwise-finite operad, then the natural map
\[ \hocolim_{\alpha} KP_{\alpha} \to K(\holim_{\alpha} P_{\alpha}) \]
is also a weak equivalence of operads. (Note that the homotopy limits and colimits here are all formed within the category of operads, with respect to the simplicial tensoring and cotensoring. In particular, the homotopy limits are constructed termwise, but the homotopy colimits are not.)
\end{thm}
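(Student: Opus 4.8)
The plan is to exploit the factorization $K = \dual \circ B$, where $B \colon \mathsf{Operad} \to \mathsf{Cooperad}$ is the bar construction and $\dual = \Map(-,S)$ is Spanier--Whitehead duality, and to reduce the second statement to the first using the double-duality equivalence of Theorem \ref{thm:KK}.

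For the first statement I would argue that each of the two factors sends homotopy colimits to the appropriate kind of limit. We already observed, in the proof of Proposition \ref{prop:cofibrant-precooperad}, that $B$ preserves homotopy colimits: on cofibrant operads it agrees with $\mathbb{B}$, which is a left adjoint and is simplicial by Lemma \ref{lem:BC-simplicial}, hence commutes with coproducts, with the simplicial tensoring, and with geometric realization, i.e.\ with the two-sided bar construction that computes $\hocolim_\alpha$ inside $\mathsf{Operad}$. Thus $B(\hocolim_\alpha P_\alpha)$ is, up to weak equivalence, the homotopy colimit of the cooperads $BP_\alpha$ (formed, say, in the $C$-model structure, where by Proposition \ref{prop:cofibrant-precooperad} it is again a quasi-cooperad, and so --- after applying $B\tilde C(-)$ --- an honest cooperad). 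Since Spanier--Whitehead duality is computed termwise, turns the cooperad structure into an operad structure, and converts homotopy colimits of symmetric sequences into homotopy limits, while homotopy limits of operads are formed termwise, applying $\dual$ yields $K(\hocolim_\alpha P_\alpha) = \dual B(\hocolim_\alpha P_\alpha) \homeq \holim_\alpha \dual B P_\alpha = \holim_\alpha K P_\alpha$. A final bookkeeping step, using naturality of the identifications involved together with Lemma \ref{lem:KP}, identifies this zigzag with the natural comparison map.

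For the second statement I would reduce to the first by double duality. Since a cofibrant operad is termwise-cofibrant, the hypotheses say each $P_\alpha$ is termwise-finite-cofibrant (Definition \ref{def:termwise-finite}). Choose a functorial cofibrant replacement $R_\alpha := \tilde K P_\alpha$ of the operad $KP_\alpha = \dual BP_\alpha$; because $BP_\alpha$ is termwise-finite-cofibrant (\cite[11.14]{arone/ching:2009}), each $R_\alpha$ is again termwise-finite-cofibrant. Applying the first statement to the diagram $\{R_\alpha\}$ gives $K(\hocolim_\alpha R_\alpha) \homeq \holim_\alpha K R_\alpha$, and Theorem \ref{thm:KK} identifies $K R_\alpha = K\tilde K P_\alpha \homeq P_\alpha$ naturally, so $K(\hocolim_\alpha R_\alpha) \homeq \holim_\alpha P_\alpha$. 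Since the diagram is finite and each $R_\alpha$ is termwise-finite, $\hocolim_\alpha R_\alpha$ is itself termwise-finite-cofibrant (the free operad on a termwise-finite symmetric sequence is termwise-finite, and a finite homotopy colimit, built from homotopy pushouts and realizations, preserves this), so Theorem \ref{thm:KK} gives $K K(\hocolim_\alpha R_\alpha) \homeq \hocolim_\alpha R_\alpha \homeq \hocolim_\alpha K P_\alpha$. Applying $K$ to the earlier equivalence then produces $K(\holim_\alpha P_\alpha) \homeq \hocolim_\alpha KP_\alpha$, and tracing through the construction shows this agrees with the natural map.

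The main obstacle is the first technical point above: homotopy colimits in $\mathsf{Operad}$ (and in $\mathsf{Cooperad}$) are \emph{not} computed termwise, so one cannot simply evaluate $K$ of a homotopy colimit termwise and quote that $\dual$ converts colimits of spectra into limits. The content is that ``$B$ preserves homotopy colimits'' feeds the operadic homotopy colimit into a \emph{cooperadic} one, for which the termwise descriptions of $\dual$ and of homotopy limits of operads do apply; making this compatibility precise --- in particular controlling the underlying symmetric sequence of $B(\hocolim_\alpha P_\alpha)$ via the quasi-cooperad machinery of \S\ref{sec:precooperads} and the identification $KP \isom C\dual P$ of Lemma \ref{lem:KP} --- is where the real work lies. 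A secondary, more routine point is the claim used in the second statement that a finite homotopy colimit of termwise-finite operads is again termwise-finite, together with the naturality of the double-duality zigzag of Theorem \ref{thm:KK}.
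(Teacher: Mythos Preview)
Your approach to the second statement via double duality (Theorem~\ref{thm:KK}) is essentially what the paper does, though the paper is terser: it applies Theorem~\ref{thm:KK} to each $P_\alpha$ and to $\holim_\alpha P_\alpha$, the latter being termwise-finite because operadic homotopy limits are computed termwise and a finite homotopy limit of finite spectra is finite.

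For the first statement, however, you have correctly identified the obstacle but not resolved it, and the paper takes a different route that sidesteps it entirely. You argue that $B \simeq \mathbb{B}$ on cofibrant operads and that $\mathbb{B}$, being left Quillen, preserves homotopy colimits; but this lands you in $\mathsf{PreCooperad}$, where homotopy colimits are \emph{not} computed termwise, so you cannot immediately apply termwise Spanier--Whitehead duality. Your suggestion to ``control the underlying symmetric sequence via the quasi-cooperad machinery'' does not close the gap: knowing that $\hocolim_\alpha BP_\alpha$ is a quasi-cooperad, or even that it is strictly equivalent to the cooperad $B(\hocolim_\alpha P_\alpha)$, still does not identify its underlying symmetric sequence with the \emph{termwise} homotopy colimit of the $BP_\alpha$, which is what you need before applying $\dual$.

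The paper avoids this by working at the level of symmetric sequences from the start. The key observation is an isomorphism $BP \cong \Sigma\,\indec\,WP$ of symmetric sequences, where $\indec \colon \mathsf{Operad} \to \spectra^{\mathsf{\Sigma}}$ is the indecomposables functor. This functor is left Quillen---its right adjoint assigns to a symmetric sequence the associated trivial operad---so it carries homotopy colimits in $\mathsf{Operad}$ to homotopy colimits in $\spectra^{\mathsf{\Sigma}}$, which \emph{are} computed termwise. Since termwise suspension also preserves homotopy colimits, the symmetric sequence underlying $B(\hocolim_\alpha P_\alpha)$ is the termwise homotopy colimit of the $BP_\alpha$. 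Now termwise $\dual$ converts this to a termwise homotopy limit, and since homotopy limits of operads are termwise, the result follows. The missing ingredient in your proposal is precisely this use of $\indec$ as a left Quillen functor landing in symmetric sequences rather than in pre-cooperads.
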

\begin{proof}
For the first part, it is enough to show that there is a natural weak equivalence
\[ \hocolim_{\alpha} BP_{\alpha} \weq B(\hocolim_{\alpha} P_{\alpha}) \]
where the homotopy colimit on the left-hand side is formed in the category of symmetric sequences, that is, termwise. Applying Spanier-Whitehead duality yields the claim since the homotopy limit in the category of operads is calculated termwise. To obtain the above equivalence, we use the fact that the bar construction of an operad $P$ can be identified, as a symmetric sequence, with the (termwise suspension of) the derived indecomposables. (Specifically, there is an isomorphism $BP \isom \Sigma \mathsf{indec}WP$.) The indecomposables functor $\mathsf{indec}$ is a left Quillen functor (with right adjoint the trivial operad functor) and so takes homotopy colimits of operads to homotopy colimits of symmetric sequences. The termwise suspension preserves homotopy colimits, so we have the required equivalence.

The second part follows from the first by applying Theorem \ref{thm:KK} to each $P_{\alpha}$ and to $\holim_{\alpha} P_{\alpha}$ (which is termwise-finite because a finite homotopy limit of homotopy-finite spectra is homotopy-finite).
\end{proof}

\begin{remark}
We have shown that derived Koszul duality determines a contravariant equivalence between the homotopy category of termwise-finite operads and itself. If $P$ is not termwise-finite, we do not expect to be able to recover $P$ from $KP$. Instead, one should replace $P$ with the filtered diagram of finite subcomplexes of a cellular replacement $\tilde{P}$. Applying $K$ objectwise to this diagram we obtain a `pro-operad', that is a cofiltered diagram of operads. One can then recover $P$ from this pro-operad by applying $K$ again objectwise and taking the homotopy colimit of the resulting filtered diagram.

One would hope that there exists a contravariant Quillen equivalence between the category of operads and an appropriate model structure on the category of pro-operads. Unfortunately, the analysis of Christensen-Isaksen \cite{christensen/isaksen:2004} for pro-spectra does not apply directly since the projective model structure on operads is not left proper. We therefore do not offer such a result.
\end{remark}

\section{Examples and Conjectures} \label{sec:examples}

We have one example of bar-cobar duality (beyond the duality between free and trivial operads) and various conjectures.

\begin{example}
Let $\mathsf{Com}$ denote the stable commutative operad described in \ref{ex:operads}. In \cite{ching:2005a} we proved that the terms of the derived Koszul dual $K(\mathsf{Com})$ are equivalent to Goodwillie's derivatives $\der_*I$ of the identity functor on based spaces -- see \cite{goodwillie:2003},\cite{johnson:1995},\cite{arone/mahowald:1999}. In \cite{arone/ching:2009}, with Greg Arone, we gave deeper insight into this result, identifying the operad $\mathsf{Com}$ with an operad $\der^*(\Sigma^\infty \Omega^\infty)$ formed by the Spanier-Whitehead duals of the Goodwillie derivatives of the functor
\[ \Sigma^\infty \Omega^\infty : \spectra \to \spectra. \]
It follows from Theorem \ref{thm:KK} that we also have
\[ K(\der_*I) \homeq \der^*(\Sigma^\infty \Omega^\infty). \]
Alternatively, if $\der_*(\Sigma^\infty \Omega^\infty)$ denotes the cooperad dual to $\der^*(\Sigma^\infty \Omega^\infty)$, then we have an equivalence of operads
\[ \der_*I \homeq C(\der_*(\Sigma^\infty \Omega^\infty)) \]
where the cooperad structure on $\der_*(\Sigma^\infty \Omega^\infty)$ comes from the comonad structure on the functor $\Sigma^\infty \Omega^\infty$.
\end{example}

\begin{conjecture} \label{conj:calculus}
Let $\cat{C}$ be a simplicial model category `in which one can do Goodwillie calculus'. (See, for example, Kuhn \cite{kuhn:2007}.) Let $\Sigma^\infty_\cat{C}$ and $\Omega^\infty_\cat{C}$ denote the `suspension spectrum' and `infinite loop-space' functors associated to a stabilization of the model category $\cat{C}$. The (imprecisely-stated) conjecture is that there is a cooperad
\[ \der_*(\Sigma^\infty_\cat{C} \Omega^\infty_\cat{C}) \]
whose terms are the Goodwillie derivatives (in a generalized sense) of $\Sigma^\infty_\cat{C} \Omega^\infty_\cat{C}$, and an equivalence of operads
\[ \der_*I_\cat{C} \homeq C(\der_*(\Sigma^\infty_\cat{C} \Omega^\infty_\cat{C}))  \]
where the terms of $\der_*I_\cat{C}$ are the Goodwillie derivatives (also in a generalized sense) of the identity functor on $\cat{C}$.
\end{conjecture}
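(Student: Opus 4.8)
The statement is a conjecture, so what follows is a strategy rather than an argument; the plan is to generalize, from $\cat{C}=\based$ to an arbitrary $\cat{C}$ ``in which one can do Goodwillie calculus'', the chain of equivalences of \cite{arone/ching:2009} that identifies $\der_*I$ with $K(\mathsf{Com})$ and hence with a cobar construction. First one must fix the hypotheses: $\cat{C}$ should be pointed and come with a stabilization $\operatorname{Sp}(\cat{C})$ and a Quillen adjunction $\Sigma^\infty_\cat{C}\colon\cat{C}\rightleftarrows\operatorname{Sp}(\cat{C})\colon\Omega^\infty_\cat{C}$ for which $n$-homogeneous functors out of $\cat{C}$ are classified by spectra with $\Sigma_n$-action; for a reduced finitary functor $F$ one then sets $\der_*F(n)$ to be the classifying spectrum of the $n$-th layer $D_nF$ of its Taylor tower (this is the ``generalized sense'' of derivative). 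Two inputs about this construction are needed: \emph{(i)} $\der_*\Sigma^\infty_\cat{C}$ and $\der_*\Omega^\infty_\cat{C}$ are the unit symmetric sequence (concentrated in arity $1$, with value $S$); and \emph{(ii)} a chain rule, i.e.\ a natural comparison between $\der_*(F\circ G)$ and the composition product (denoted $\bar\circ$) $\der_*F\bar\circ\der_*G$, which is a weak equivalence when $F,G$ are endofunctors of the stable category $\operatorname{Sp}(\cat{C})$.

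\emph{The cooperad.} Since $\Sigma^\infty_\cat{C}\dashv\Omega^\infty_\cat{C}$, the functor $G:=\Sigma^\infty_\cat{C}\Omega^\infty_\cat{C}\colon\operatorname{Sp}(\cat{C})\to\operatorname{Sp}(\cat{C})$ is a comonad. Applying $\der_*$ to the comultiplication $G\to G\circ G$ and then the chain-rule equivalence $\der_*(G\circ G)\homeq\der_*G\bar\circ\der_*G$ gives a decomposition map $\der_*G\to\der_*G\bar\circ\der_*G$; coassociativity of $G$ makes it coassociative, and the counit $G\to\operatorname{id}$ induces the counit $\der_*G\to\der_*(\operatorname{id}_{\operatorname{Sp}(\cat{C})})$, which is the unit symmetric sequence. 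To make $\der_*G$ an honest cooperad of spectra rather than a homotopy-coherent one, one would use the explicit point-set model of the derivative functor from \cite{arone/ching:2009}, in which the chain-rule maps are strict isomorphisms; checking that such a model is available for $\operatorname{Sp}(\cat{C})$, and that its structure isomorphisms are compatible with the (co)operadic maps, is the technical core of this step.

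\emph{Recovering the identity.} The equivalence claimed by the conjecture comes from resolving $\operatorname{id}_\cat{C}$ by the monad $\Omega^\infty_\cat{C}\Sigma^\infty_\cat{C}$: under a suitable convergence hypothesis there is a natural equivalence $\operatorname{id}_\cat{C}\homeq\Tot\left([n]\mapsto\Omega^\infty_\cat{C}(\Sigma^\infty_\cat{C}\Omega^\infty_\cat{C})^{n}\Sigma^\infty_\cat{C}\right)$ (for $\cat{C}=\based$ on simply connected inputs this is essentially the convergence of the Goodwillie tower of the identity, in the guise of the Bousfield--Kan stable resolution). Apply $\der_*$ termwise to this cosimplicial functor: input \emph{(i)} strips off the outer $\Omega^\infty_\cat{C}$ and inner $\Sigma^\infty_\cat{C}$, and input \emph{(ii)} applied repeatedly identifies $[n]\mapsto\der_*\left(\Omega^\infty_\cat{C}(\Sigma^\infty_\cat{C}\Omega^\infty_\cat{C})^{n}\Sigma^\infty_\cat{C}\right)$ with $[n]\mapsto(\der_*G)^{\bar\circ n}$ carrying its comonadic cosimplicial structure --- which is, by definition, the cosimplicial diagram whose homotopy limit is the cobar construction $C(\der_*G)$. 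One then commutes $\der_*$ past $\Tot$ to get $\der_*I_\cat{C}\homeq C(\der_*G)$ on underlying symmetric sequences, and finally upgrades this to an equivalence \emph{of operads} by matching the operad structure on $\der_*I_\cat{C}$ coming from the chain rule applied to $\operatorname{id}_\cat{C}=\operatorname{id}_\cat{C}\circ\operatorname{id}_\cat{C}$ with the operad structure $C$ imposes on the cobar construction --- the operadic refinement done for $\cat{C}=\based$ in \cite{arone/ching:2009} and, on the bar--cobar side, in \S\ref{sec:bar}--\S\ref{sec:koszul} above.

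\emph{Main obstacle.} The hard part is the convergence hypothesis together with the commutation of $\der_*$ past $\Tot$. Even for $\cat{C}=\based$ this fails for arbitrary objects: some finiteness or nilpotence condition (simple connectivity, or passage to a completed variant of $\cat{C}$) is forced, since otherwise $\operatorname{id}_\cat{C}$ is genuinely not the totalization of its monadic resolution. So any correct precise formulation of the conjecture must build such a hypothesis into the notion of ``a category in which one can do Goodwillie calculus''; granting one, the remainder --- a strictly associative model of the derivative functor on $\operatorname{Sp}(\cat{C})$, and the bookkeeping of (co)operad structures through the comparison --- is a substantial but essentially routine generalization of \cite{arone/ching:2009}.
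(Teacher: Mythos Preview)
The statement is a \emph{conjecture}, and the paper offers no proof --- only the brief remark that ``to make proper sense of this conjecture, we need to work with \emph{coloured} operads of spectra, which are outside the scope of this paper.'' So there is nothing to compare your argument against; you correctly flag this yourself and present a strategy rather than a proof.

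Your outline is sensible and follows closely the shape of the argument in \cite{arone/ching:2009} for $\cat{C}=\based$: exploit the comonad structure on $\Sigma^\infty_\cat{C}\Omega^\infty_\cat{C}$, resolve the identity cosimplicially, take derivatives levelwise, and identify the result with the cobar cosimplicial object. The obstacles you single out --- convergence of the resolution and commuting $\der_*$ past $\Tot$ --- are genuine and are exactly where the work lies even in the known case.

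One point you do not address, and which the paper explicitly raises, is the need for \emph{coloured} operads in the general setting. For an arbitrary $\cat{C}$ the stabilization $\operatorname{Sp}(\cat{C})$ need not be the ordinary category of spectra, and the derivatives of functors between different categories (here $\cat{C}$ and $\operatorname{Sp}(\cat{C})$) need not assemble into a single-coloured operad of $S$-modules. Your sketch implicitly assumes the derivatives land in $\spectra$ and that the chain rule produces maps of ordinary symmetric sequences; in the generality of the conjecture this is not automatic, and formulating the correct target category for $\der_*G$ and $\der_*I_\cat{C}$ is part of what it means to make the conjecture precise. This is not a flaw in your reasoning so much as an indication that the ``routine generalization'' you describe at the end may require more structural input than you allow for.
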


To make proper sense of this conjecture, we need to work with \emph{coloured} operads of spectra, which are outside the scope of this paper. We do expect the form of our results to carry over to that setting though.

\begin{conjecture} \label{conj:P-alg}
Let $P$ be a reduced operad of spectra and let $\cat{C}$ be the category of $P$-algebras in $\spectra$. Then the objects described in Conjecture \ref{conj:calculus} satisfy
\[ \der_*I_\cat{C} \homeq P \]
and
\[ \der_*(\Sigma^\infty_\cat{C} \Omega^\infty_\cat{C}) \homeq BP. \]
\end{conjecture}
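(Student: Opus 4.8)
The plan is to reduce the conjecture to the results of \S\ref{sec:bar} and \S\ref{sec:precooperads}, once the calculus of functors has been made to work on $\cat{C} = P\text{-alg}(\spectra)$. The crucial structural observation is that, although $\cat{C}$ is not stable, it is pointed (with zero object the trivial algebra on $0$) and its stabilization is again $\spectra$: a Beck module over the zero $P$-algebra is just a spectrum, because the only arity of $P$ that acts nontrivially on it is $P(1) = S$. Thus $\Omega^\infty_\cat{C} : \spectra \to \cat{C}$ is the trivial-algebra functor $\mathrm{triv}$ and $\Sigma^\infty_\cat{C} : \cat{C} \to \spectra$ is the derived indecomposables functor $\mathrm{TAQ}^P$. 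Since the stabilization is a single category, no coloured operads are needed for \emph{this} conjecture (in contrast to the general situation of Conjecture~\ref{conj:calculus}): $\der_* I_\cat{C}$ and $\der_*(\Sigma^\infty_\cat{C}\Omega^\infty_\cat{C})$ are honest one-coloured (co)operads of spectra in the sense of this paper. The first task, then, is to set up the Goodwillie--Kuhn calculus for $\cat{C}$, verify that the derivatives assemble into an operad and a cooperad with the structure maps of Conjecture~\ref{conj:calculus}, and in particular establish the equivalence of operads $\der_* I_\cat{C} \homeq C(\der_*(\Sigma^\infty_\cat{C}\Omega^\infty_\cat{C}))$; this foundational step is what is missing from the literature at the time of writing and is the real obstacle.

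The heart of the argument is then the identification, as cooperads,
\[ \der_*(\Sigma^\infty_\cat{C}\Omega^\infty_\cat{C}) \homeq BP. \]
One computes the comonad $\Sigma^\infty_\cat{C}\Omega^\infty_\cat{C}(X) = \mathrm{TAQ}^P(\mathrm{triv}(X))$ by the standard monadic bar resolution. Since $P$ is reduced, this bar construction agrees with the operadic bar construction of \S\ref{sec:bar} evaluated on $X$, so that $\mathrm{TAQ}^P(\mathrm{triv}(X)) \homeq \bigvee_{n \geq 1}(BP(n) \smsh X^{\smsh n})_{h\Sigma_n}$ naturally in $X$; this is the topological analogue of Getzler--Jones' identification of the derived indecomposables of a trivial algebra with the bar cooperad. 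The $n$-th polynomial layer of the functor $\Sigma^\infty_\cat{C}\Omega^\infty_\cat{C}$ is therefore $(BP(n) \smsh X^{\smsh n})_{h\Sigma_n}$, giving $\der_n(\Sigma^\infty_\cat{C}\Omega^\infty_\cat{C}) \homeq BP(n)$ equivariantly, and one checks that the comonad comultiplication induces exactly the cooperad structure coming from the decomposition maps $\bar{w}(T \cup_i U) \to \bar{w}(T) \smsh \bar{w}(U)$ of Definition~\ref{def:bar}. Matching these two a priori different comultiplications is the most delicate of the otherwise routine verifications.

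With the cooperad identification in hand, the first equivalence of the conjecture follows from the machinery of this paper. Applying the cobar construction to the equivalence above (which preserves weak equivalences between termwise-cofibrant cooperads, by Proposition~\ref{prop:bar-cobar-homotopy}) gives $C(\der_*(\Sigma^\infty_\cat{C}\Omega^\infty_\cat{C})) \homeq CBP$, and Theorem~\ref{thm:W-CB} together with Lemma~\ref{lem:WP-P} supplies the zigzag $CBP \lweq WP \weq P$ of operads. Combined with the foundational equivalence $\der_* I_\cat{C} \homeq C(\der_*(\Sigma^\infty_\cat{C}\Omega^\infty_\cat{C}))$, this yields $\der_* I_\cat{C} \homeq P$. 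Alternatively one can run this in the other order, starting from $\der_* I_\cat{C} \homeq P$ once known, applying $\mathbb{B}$ and using that $B\tilde{C}Q \homeq Q$ for quasi-cooperads $Q$ (Corollary~\ref{cor:BC-W}) to recover $\der_*(\Sigma^\infty_\cat{C}\Omega^\infty_\cat{C}) \homeq BP$; either way, the two assertions of the conjecture are equivalent given the present paper's theorems.

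To summarize: this paper reduces Conjecture~\ref{conj:P-alg} to the bar-construction identification $\der_*(\Sigma^\infty_\cat{C}\Omega^\infty_\cat{C}) \homeq BP$, and the latter is plausible precisely because the stabilization of $P$-algebras is simply $\spectra$ with $\Omega^\infty_\cat{C} = \mathrm{triv}$, so that the relevant Taylor tower is computed by the same monadic bar resolution that computes $\mathrm{TAQ}^P$. The genuine difficulty is entirely in the calculus foundations for the category $\cat{C}$ — a precise and proved version of Conjecture~\ref{conj:calculus} in this case — after which the results of \S\ref{sec:bar} and \S\ref{sec:precooperads} do the rest.
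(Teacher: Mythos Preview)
The statement you are addressing is labelled a \emph{conjecture} in the paper and is presented in the Examples and Conjectures section without any proof or proof sketch. There is therefore no ``paper's own proof'' to compare your proposal against.

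That said, your outline is a sensible plan of attack and correctly identifies both the strategy and the obstruction. The identifications $\Omega^\infty_\cat{C} \homeq \mathrm{triv}$ and $\Sigma^\infty_\cat{C} \homeq \mathrm{TAQ}^P$ for the stabilization of $P$-algebras are the expected ones, and the splitting $\mathrm{TAQ}^P(\mathrm{triv}(X)) \homeq \bigvee_n (BP(n) \smsh X^{\smsh n})_{h\Sigma_n}$ is exactly the sort of calculation that makes the conjecture plausible; once it is in hand, your deduction of $\der_* I_\cat{C} \homeq P$ from Theorem~\ref{thm:W-CB} and Lemma~\ref{lem:WP-P} is correct. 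You are also right that the two assertions of the conjecture become equivalent given the machinery of \S\ref{sec:bar}--\ref{sec:precooperads}.

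You are equally right, however, that what you have written is not a proof but a reduction: everything hinges on making Conjecture~\ref{conj:calculus} precise and true for $\cat{C} = P\text{-alg}$, including the construction of the operad and cooperad structures on the derivatives and the equivalence $\der_* I_\cat{C} \homeq C(\der_*(\Sigma^\infty_\cat{C}\Omega^\infty_\cat{C}))$. You flag this honestly as ``the real obstacle,'' and it is. The step you call ``the most delicate of the otherwise routine verifications'' --- matching the comonad comultiplication on $\Sigma^\infty_\cat{C}\Omega^\infty_\cat{C}$ with the cooperad structure on $BP$ from Definition~\ref{def:bar} --- is also genuinely nontrivial and would need a careful argument, not just a check. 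So your proposal should be read as a roadmap rather than a proof, which is appropriate given that the paper itself offers none.
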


We also have conjectures for some of the other operads of spectra described in \ref{ex:operads}.

\begin{conjecture} \label{conj:discs}
Let $\Sigma^\infty_+ \mathcal{D}_n$ be the stable little $n$-discs operad (formed from the suspension spectra of the terms in the little $n$-discs operad of topological spaces). Then there is an equivalence of operads
\[ \tag{*} K(\Sigma^\infty_+ \mathcal{D}_n) \homeq s^{-n} \Sigma^\infty_+ \mathcal{D}_n. \]
The right-hand side here is an $n$-fold `operadic desuspension' of $\Sigma^\infty_+ \mathcal{D}_n$. Part of the conjecture is that sense can be made of the desuspension construction in such a way that the equivalence (*) exists.
\end{conjecture}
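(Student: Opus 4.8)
The plan is to reduce $(*)$ to an identification of the bar construction $B(\Sigma^\infty_+\mathcal{D}_n)$ as a cooperad, and then to establish that identification geometrically. First observe that each space $\mathcal{D}_n(k)$ is homotopy equivalent to the configuration space $\operatorname{Conf}_k(\R^n)$, which has the homotopy type of a finite CW complex; hence $\Sigma^\infty_+\mathcal{D}_n$ is termwise-finite and Theorem \ref{thm:KK} applies, so $K$ is a contravariant self-equivalence of the homotopy category of termwise-finite reduced operads. Granting the bookkeeping identity $K(s^{-n}P)\homeq s^{n}K(P)$ for termwise-finite $P$, the two forms of $(*)$ become equivalent and it is enough to produce one natural equivalence. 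Since $KP=\dual BP$ (and, by Lemma \ref{lem:KP}, also $KP\isom C\dual P$), and since Spanier-Whitehead duality is homotopy-invariant on termwise-finite-cofibrant cooperads, everything reduces to identifying the cooperad $B(\Sigma^\infty_+\mathcal{D}_n)$ up to weak equivalence.

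The next step is to make sense of the $n$-fold operadic desuspension $s^{-n}$ for operads of spectra. For a finite set $I$ and $n\ge 0$, let $S^{n\tilde\rho_I}$ denote the one-point compactification of the $n$-fold sum of the reduced permutation representation of $\Sigma_I$; this is a $\Sigma_I$-sphere of dimension $n(|I|-1)$, twisted by a power of the sign representation when $n$ is odd. The standard exact sequences relating $\tilde\rho_I$, for $I=\coprod_{j\in J}I_j$, to $\tilde\rho_J$ and the $\tilde\rho_{I_j}$ produce maps of representation spheres which, combined with the structure maps of a reduced operad $P$, should assemble into a reduced operad $s^{-n}P$ with $(s^{-n}P)(I):=\Map(S^{n\tilde\rho_I},P(I))$, recovering $P$ when $n=0$; a parallel construction gives the corresponding cooperad. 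Verifying the operad and cooperad axioms here is the spectral analogue of the operadic suspension on chain complexes used by Getzler and Jones and by Fresse, and is the same bookkeeping that underlies Arone and Mahowald's and Arone and Dwyer's treatment of Goodwillie derivatives; the delicate points are the coherence of the sign twists for odd $n$ and the behaviour of $\Map(S^{n\tilde\rho},-)$ under smash products, which relies on the finiteness of these spheres.

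The heart of the argument is the identification of cooperads
\[ B(\Sigma^\infty_+\mathcal{D}_n) \homeq s^{-n}\Sigma^\infty_+\mathcal{D}_n, \]
where the right-hand side is understood via the dual (cooperad) construction of the previous paragraph. I would approach this through the Fulton-MacPherson compactification $\overline{\mathcal{D}}_n$, which is weakly equivalent to $\mathcal{D}_n$ as an operad and whose $I$-term is a compact manifold with corners whose boundary stratification is indexed precisely by the trees of $\mathsf{T}(I)$ --- exactly the combinatorics governing $B(\Sigma^\infty_+\overline{\mathcal{D}}_n)$. Poincar\'e-Lefschetz duality applied stratum by stratum, together with a description of the normal bundles of the strata (which are built from the representations $\tilde\rho$ and carry the predicted sign twist), should identify $\dual B(\Sigma^\infty_+\overline{\mathcal{D}}_n)$ with the desuspended operad $s^{-n}\Sigma^\infty_+\mathcal{D}_n$, in the spirit of the Poincar\'e/Koszul duality of Ayala and Francis. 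An alternative, more algebraic route uses Dunn additivity $\mathcal{D}_n\homeq\mathcal{D}_1^{\otimes n}$ for the Boardman-Vogt tensor product of operads, together with compatible monoidal behaviour of $B$, $C$ and $s^{-1}$, to reduce to $n=1$, where $(*)$ reads $K(\mathsf{Ass})\homeq s^{-1}\mathsf{Ass}$; this case can be checked directly from the definition of $B$ in \S\ref{sec:bar}, using that linearly-ordered binary trees model the bar resolution of an associative ring spectrum.

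The main obstacle is precisely the one flagged in Conjecture \ref{conj:discs}: first, promoting $s^{-n}$ to a genuine operad of spectra, that is, getting the representation-sphere bookkeeping and the sign twists to cohere; and, more seriously, proving that the Spanier-Whitehead (Poincar\'e-Lefschetz) duality equivalences on the Fulton-MacPherson strata are compatible with the bar-construction structure maps --- i.e.\ that this duality is cooperadic rather than merely termwise. On the Dunn-additivity route the analogous sticking point is establishing that $B$, $C$ and the operadic (de)suspension are suitably monoidal for the Boardman-Vogt tensor product, a compatibility not addressed in this paper. Settling either of these is what would upgrade the conjecture to a theorem.
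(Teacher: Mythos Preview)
The statement you are attempting to prove is stated in the paper as a \emph{conjecture}, not a theorem; the paper offers no proof. Immediately after Conjecture~\ref{conj:discs} the author writes that the conjecture is inspired by Fresse's algebraic result, that (with Salvatore) an equivalence of \emph{symmetric sequences} of the form $(*)$ has been constructed, but not an equivalence of operads, and that ``as far as we know, the conjecture remains open.'' There is therefore nothing in the paper to compare your argument against.

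Your proposal is not a proof either, and you say as much in your final paragraph. The outline you give is reasonable as a strategy: reducing to an identification of $B(\Sigma^\infty_+\mathcal{D}_n)$ as a cooperad, using the Fulton--MacPherson compactification and stratified Poincar\'e--Lefschetz duality, or alternatively reducing to $n=1$ via Dunn additivity. But the two points you flag as ``obstacles'' are exactly the content of the conjecture. In particular, upgrading a termwise Spanier--Whitehead duality equivalence to a map of (co)operads is precisely what the author reports being unable to do, and your sketch gives no mechanism for this beyond the assertion that it ``should'' work. Likewise, the monoidality of $B$, $C$, and $s^{-1}$ with respect to the Boardman--Vogt tensor product that your alternative route requires is itself a substantial and, to my knowledge, unestablished claim. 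So your proposal correctly identifies a plausible line of attack and its genuine difficulties, but it does not close the gap; it is a restatement of why the conjecture is hard rather than a proof.
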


This conjecture is inspired by the corresponding algebraic result of Fresse \cite{fresse:2011}. In joint work with Paolo Salvatore, we have constructed an equivalence of symmetric sequences of the form (*), but not an equivalence of operads. As far as we know, the conjecture remains open.

\begin{conjecture} \label{conj:grav}
Let $\mathcal{M}_{0,n+1}$ denote the moduli space of genus $0$ Riemann surfaces with $n+1$ marked points. Let $\overline{\mathcal{M}}_{0,n+1}$ denote the Deligne-Mumford compactification of $\mathcal{M}_{0,n+1}$. (We can identify $\overline{\mathcal{M}}_{0,n+1}$ with the moduli spaces of stable nodal genus $0$ curves with $n+1$ marked points.) Then $\overline{\mathcal{M}}_{0,*+1}$ is an operad of unbased topological spaces and so has an associated stable operad
\[ \Sigma^\infty_+ \overline{\mathcal{M}}_{0,*+1}. \]

The conjecture is that there is an equivalence of operads
\[ K(\Sigma^\infty_+ \overline{\mathcal{M}}_{0,*+1}) \homeq s^{-1}(\mathcal{D}_2)_{bS^1} \]
where the right-hand side is a desuspension of the `transfer operad' for the $S^1$-action on the little $2$-discs operad constructed by Westerland \cite{westerland:2006}, and mentioned in \ref{ex:operads}(3).
\end{conjecture}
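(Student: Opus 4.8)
The plan is to lift to the level of spectra the Koszul duality between the homology gravity and hypercommutative operads proved by Getzler \cite{getzler:1994}, using the bar--cobar machinery of Sections~2--4. First note that both operads are termwise-finite-cofibrant after cofibrant replacement: $\overline{\mathcal{M}}_{0,n+1}$ is a closed manifold, and $(\mathcal{D}_2)_{bS^1}(n)$ (see \ref{ex:operads}(3)) is equivalent to a Thom spectrum over the finite complex $\mathcal{D}_2(n)/S^1$, since the rotation $S^1$-action on $\mathcal{D}_2(n)$ is free for $n \geq 2$. Hence Theorem~\ref{thm:KK} applies, and the conjecture is equivalent to exhibiting a weak equivalence of operads directly between $K(\Sigma^\infty_+\overline{\mathcal{M}}_{0,*+1}) = \dual B(\Sigma^\infty_+\overline{\mathcal{M}}_{0,*+1})$ and the operadic desuspension $s^{-1}(\mathcal{D}_2)_{bS^1}$.

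The next step is to identify the bar construction geometrically. Because $\Sigma^\infty_+$ is monoidal and $B$ is built from smash products and the coends over $\mathsf{T}(I)$ appearing in Definition~\ref{def:bar}, there is an isomorphism $B(\Sigma^\infty_+\overline{\mathcal{M}}_{0,*+1}) \isom \Sigma^\infty B^{\mathsf{top}}(\overline{\mathcal{M}}_{0,*+1})$, the suspension spectrum of the space-level bar cooperad. The central claim --- the space-level shadow of what makes Getzler's homology computation work --- is that $B^{\mathsf{top}}(\overline{\mathcal{M}}_{0,*+1})(n)$ is naturally weakly equivalent to a Thom space over the open moduli space $\mathcal{M}_{0,n+1}$, the twisting bundle being determined by the operadic desuspension. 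The input here is the normal-crossings boundary stratification of $\overline{\mathcal{M}}_{0,n+1}$ indexed by trees of irreducible components, with stratum $\prod_{v}\mathcal{M}_{0,|v|+1}$, together with its compatibility with the curve-gluing operad structure: the bar construction precisely ``unglues'' the compactification and leaves the top open stratum. Applying Atiyah duality then turns $\dual B^{\mathsf{top}}(\overline{\mathcal{M}}_{0,*+1})(n)$ into the Thom spectrum of $-T\mathcal{M}_{0,n+1}$. Finally one combines the homotopy equivalence $\mathcal{M}_{0,n+1}\simeq\mathcal{D}_2(n)/S^1$ --- obtained from the splitting $\mathrm{Conf}_n(\mathbb{C}) \simeq \mathbb{C}\times\mathbb{C}^{\times}\times\mathcal{M}_{0,n+1}$ by normalizing two points and passing to the $S^1$-quotient --- with the Thom-spectrum description of Westerland's transfer operad (whose composition maps are Becker--Gottlieb transfers) to identify this with $s^{-1}(\mathcal{D}_2)_{bS^1}(n)$, once the twisting bundles are matched.

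The main obstacle is upgrading this chain of termwise equivalences to a coherent equivalence of operads. Getzler's argument lives in homology, where coherence is easy to control; here one must either produce a geometric model in which each step is manifestly a map of (co)operads, or --- more in the spirit of this paper --- carry out the comparison inside $\mathsf{PreCooperad}$: replace $\dual\big(s^{-1}(\mathcal{D}_2)_{bS^1}\big)$ by its rigidification $B\tilde{C}\dual\big(s^{-1}(\mathcal{D}_2)_{bS^1}\big)$ and use Proposition~\ref{prop:quasi-weq} to reduce operadic coherence to the termwise weak equivalence already established. Two secondary difficulties feed into this: (i) determining the correct operadic desuspension and the various twisting line/vector bundles so that the Thom-bundle bookkeeping matches on the nose --- the conjecture explicitly flags this; and (ii) proving that the equivalence $\mathcal{M}_{0,n+1}\simeq\mathcal{D}_2(n)/S^1$ intertwines the (only partially defined, ``gravity'') structure coming from the boundary strata of $\overline{\mathcal{M}}_{0,*+1}$ with Westerland's transfer operad structure --- the operadic refinement of the relationship between Deligne--Mumford spaces and framed little discs studied by Kimura--Stasheff--Voronov \cite{kimura/stasheff/voronov:1995} and Getzler \cite{getzler:1994}. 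This last compatibility is where the real work lies.
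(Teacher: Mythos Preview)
The statement you are addressing is a \emph{conjecture}: the paper does not prove it. Immediately after stating it, the paper says only ``Again this conjecture is inspired by a corresponding algebraic result: see Getzler [4.6]''. There is therefore no proof in the paper to compare your proposal against.

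What you have written is not a proof but a strategy outline, and you are candid about this yourself: you flag that the operadic coherence of the termwise equivalences ``is where the real work lies'', and you note that the correct formulation of operadic desuspension is part of the conjecture. Your outline is a plausible route --- lifting Getzler's homology-level argument via the boundary stratification of $\overline{\mathcal{M}}_{0,n+1}$, Atiyah duality, and the identification $\mathcal{M}_{0,n+1}\simeq \mathcal{D}_2(n)/S^1$ --- and the idea of using the pre-cooperad machinery of \S\ref{sec:precooperads} to reduce operadic coherence to termwise checks is in the spirit of the paper. But several of the intermediate steps (the claimed Thom-space description of $B^{\mathsf{top}}(\overline{\mathcal{M}}_{0,*+1})(n)$, the matching of twisting bundles, and the compatibility of the moduli/configuration equivalence with the relevant operad structures) are themselves substantial assertions that would each require proof. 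As it stands, this is a reasonable research plan for attacking an open problem, not a proof.
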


Again this conjecture is inspired by a corresponding algebraic result: see Getzler \cite[4.6]{getzler:1995}.

\section{Proof of Proposition \ref{prop:P-CBP-symseq}} \label{sec:P-CBP}

This section is largely a rewrite of section 20 of \cite{arone/ching:2009}. The reason for the rewrite is to prove a version of Theorem 20.2 of \cite{arone/ching:2009} that has no finiteness hypotheses. This more general version specializes to Proposition \ref{prop:P-CBP-symseq} which plays an important role in the proof of Theorem \ref{thm:W-CB}. In this section we make extensive use of two-sided bar and cobar constructions for modules and comodules over operads and cooperads. These two-sided constructions are described in detail in \cite{ching:2005a} and we use the terminology of \S7 of \cite{ching:2005a} without further reference. One change in notation is that we write $\tilde{\mathsf{T}}(I)$ for the category of generalized $I$-trees. This category was denoted by $\mathsf{Tree}(I)$ in \cite{ching:2005a} but that is too close to the notation of \S\ref{sec:precooperads}.

For a reduced operad $P$, let $R$ be a right $P$-module and $L$ a left $P$-module. The decomposition maps of \cite[7.18]{ching:2005a} associated to the two-sided bar construction determine a natural map
\[ \delta: B(R,P,L) \to C(B(R,P,\mathsf{1}),BP,B(\mathsf{1},P,L)) \]
where $\mathsf{1}$ is the unit symmetric sequence.

\begin{prop} \label{prop:CB}
The map $\delta$ is a weak equivalence when $R,P,L$ are all termwise-cofibrant.
\end{prop}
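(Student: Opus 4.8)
The plan is to describe $\delta$ at the level of decorated trees and then to prove it is a weak equivalence by induction on the number of leaves, reducing at each stage to a computation in which $\delta$ becomes the unit $\mathrm{id}\to\Omega\Sigma$ of a stable adjunction. First recall the tree descriptions from \cite[\S7]{ching:2005a}: the two-sided bar construction $B(R,P,L)$ is, arity by arity, a coend over the category $\tilde{\mathsf{T}}(I)$ of generalized $I$-trees, a ``point'' of $B(R,P,L)(I)$ being such a tree $T$ with lengths in $[0,1]$ on its edges, the region near the root decorated from $R$, the regions near the leaves from $L$, and each internal $P$-vertex $t$ from $P(I_t)$, subject to the standard face identifications (an internal edge of length $0$ is contracted using the appropriate composition, etc.). Likewise $C(B(R,P,\mathsf 1),BP,B(\mathsf 1,P,L))$ is an end over $\tilde{\mathsf{T}}(I)$ of mapping spectra of the form $\Map(\bar w(-)^{\mathsf{rev}},-)$, whose coefficient diagram is assembled from the bar descriptions of $B(R,P,\mathsf 1)$, $BP$ and $B(\mathsf 1,P,L)$. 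The map $\delta$ sends a decorated tree $T$ to the rule assigning to a generalized tree $U$ the decoration of $B(R,P,\mathsf 1)\smsh BP(-)\smsh B(\mathsf 1,P,L)$ obtained by cutting $T$ along $U$ when $U\le T$, and the basepoint otherwise, with the edge lengths redistributed by the interval maps $h$ and $r$ of Definition \ref{def:interval-map} exactly as for $\theta$ in Definition \ref{def:W-CB}. That $\delta$ is well defined, natural in $R,P,L$, and compatible with the grafting, module and comodule maps is verified just as in Proposition \ref{prop:W-CB}.

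Both sides of $\delta$ are homotopy invariant on termwise-cofibrant inputs -- the source because $\bar w(-)$ is a Reedy-cofibrant $\tilde{\mathsf{T}}(I)$-diagram and the target by Proposition \ref{prop:bar-cobar-homotopy} -- so I would argue by induction on $|I|$. Fix $I$ and split the $\tilde{\mathsf{T}}(I)$-diagrams underlying the two sides into the part supported on the minimal generalized tree $\tau_I$ (no internal $P$-vertices) and the part supported on trees $T>\tau_I$. Every such $T$ is a graft, so its $P$-decorated interior involves only strictly smaller arities; combining the inductive hypothesis with the compatibility of $\delta$ with grafting (the analogue of the third step of Proposition \ref{prop:W-CB}) shows $\delta$ is a weak equivalence on the ``$T>\tau_I$'' part. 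On the ``$\tau_I$'' part, the source contributes $R(I)$ (up to the suspension coming from the root-edge length) and the target contributes $\Omega\Sigma$ of the same object, with $\delta$ identified with the unit $\mathrm{id}\to\Omega\Sigma$; here, exactly as in Lemma \ref{lem:omega-sigma} and diagram (**) in the proof of Theorem \ref{thm:W-CB}, one uses that $\tau_I$ is the only generalized tree $T$ with $\bar w(T)$ non-contractible, so that projection onto the $\tau_I$-terms is itself a weak equivalence. Since $\spectra$ is stable, $\mathrm{id}\to\Omega\Sigma$ is a weak equivalence, and a comparison of the two resulting homotopy-fibre sequences (as in the proofs of Theorem \ref{thm:W-CB} and Proposition \ref{prop:quasi-weq}) gives that $\delta(I)$ is a weak equivalence; the base cases $|I|\le 2$ are instances of the same computation.

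The main obstacle is organizational rather than computational. The source of $\delta$ is a homotopy colimit (coend) over $\tilde{\mathsf{T}}(I)$ while the target is a homotopy limit (end), so one must check that the ``split off $\tau_I$'' decomposition yields genuine homotopy-(co)fibre sequences on both sides and that these correspond under $\delta$; this is where the hypothesis that $R,P,L$ are reduced enters, since it ensures that only finitely much tree data occurs below any fixed arity, so that the relevant colimits and limits converge and match up. Granting this, the inductive step is the homotopy-fibre-sequence bookkeeping already used in \S\ref{sec:bar}, and the compatibility of $\delta$ with all the structure maps is a routine variant of Proposition \ref{prop:W-CB}.
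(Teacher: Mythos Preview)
Your approach diverges from the paper's, and the inductive step you rely on does not go through as stated. The paper does \emph{not} induct on the arity $|I|$; instead it inducts on the truncation tower of the right module $R$. Lemma~\ref{lem:cobar-fibre} shows that both source and target of $\delta$ take termwise homotopy-fibre sequences in $R$ to homotopy-fibre sequences, so the sequences $R_{=n}\to R_{\leq n}\to R_{\leq(n-1)}$ reduce the general case to the case of a \emph{trivial} right $P$-module. For trivial $R$ the paper proves a separate Lemma~\ref{lem:cobar-trivial}: using $BR\cong R\circ BP$ one constructs a levelwise weak equivalence of cosimplicial objects to one with an extra codegeneracy, so that $C(BR,BP,BL)\simeq R\;\hat{\circ}\;BL$, and then $\delta$ is identified (after composing with this) with the canonical equivalence $R\circ BL\to R\;\hat{\circ}\;BL$ from a finite wedge to the corresponding product.

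Your argument, by contrast, tries to split each side over $\tilde{\mathsf T}(I)$ into the $\tau_I$ piece and the $T>\tau_I$ piece and to handle the latter by ``compatibility with grafting''. The problem is that the two-sided bar construction has no such multiplicative decomposition: a generalized $I$-tree $T>\tau_I$ does not exhibit $B(R,P,L)(I)$ as built from copies of $B(R,P,L)(J)$ for $|J|<|I|$. Cutting $T$ at an internal edge produces a root piece carrying only $R$- and $P$-decorations and leaf pieces carrying only $P$- and $L$-decorations, i.e.\ objects of the form $B(R,P,\mathsf 1)$ and $B(\mathsf 1,P,L)$, not smaller instances of $B(R,P,L)$. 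So the inductive hypothesis on $|I|$ gives you no leverage on the $T>\tau_I$ part, and the analogy with the operadic map $\theta$ of \S\ref{sec:bar} (where grafting really does express $WP(I)$ and $CBP(I)$ in terms of smaller arities) breaks down. This is not merely organizational: it is the reason the paper passes to the truncation tower of $R$ rather than inducting on arity, and why the base case requires the cosimplicial/extra-codegeneracy argument of Lemma~\ref{lem:cobar-trivial} rather than a direct $\Omega\Sigma$ identification.
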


This is a generalization of Theorem 20.2 of \cite{arone/ching:2009} in which we have replaced the double Koszul dual with the cobar-bar construction and removed the finiteness hypotheses. Proposition \ref{prop:P-CBP-symseq} follows by taking $R = L = P$ for the following reasons. As symmetric sequences, we have
\[ CBP \isom C(\mathsf{1},BP,\mathsf{1}) \homeq C(B(P,P,\mathsf{1}),BP,B(\mathsf{1},P,P)) \]
The second equivalence here is induced by equivalences of $BP$-comodules $\mathsf{1} \weq B(P,P,\mathsf{1})$, and $\mathsf{1} \weq B(\mathsf{1},P,P)$. By \ref{prop:CB} the right-hand side above is equivalent to
\[ B(P,P,P) \homeq P \]
as a symmetric sequence.

We prove Proposition \ref{prop:CB} by the same method as in the proof of \cite[20.2]{arone/ching:2009}, that is, by induction on the truncation tower of the right $P$-module $R$. The first step is to show that each side of the map $\delta$ preserves homotopy-fibre sequences.

\begin{lemma} \label{lem:cobar-fibre}
Let $P$ be a reduced operad and $L$ a left $P$-module, both termwise-cofibrant. Let $R \to R' \to R''$ be a homotopy-fibre sequence of termwise-cofibrant right $P$-modules (i.e. for each finite set $I$, $R(I) \to R'(I) \to R''(I)$ is a homotopy-fibre sequence of spectra). Then each side of the map $\delta$ induces homotopy-fibre sequences of symmetric sequences, when applied to $R \to R' \to R''$ in the $R$-variable.
\end{lemma}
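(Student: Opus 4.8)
The plan is to unwind both sides of the map $\delta$ into explicit functors of the right $P$-module $R$, built out of smash products and mapping objects indexed by categories of trees, and to show that each of these elementary building blocks preserves homotopy-fibre sequences in the $R$-variable. Since all the modules involved are termwise-cofibrant and all objects of $\spectra$ are fibrant, a levelwise homotopy-fibre sequence of spectra is the same as a levelwise homotopy-cofibre sequence up to a shift, and stable homotopy-fibre sequences are preserved by any functor that commutes with (finite) homotopy limits and by any functor that commutes with homotopy colimits and finite wedges. So the strategy reduces to: identify the relevant building blocks, then verify this exactness for each.

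First I would treat the left-hand side $B(R,P,L)$. As a symmetric sequence this is a realization of a simplicial object whose $k$-simplices are wedge-type sums (over trees, or equivalently over strings of composable operations) of smash products of the form $R(\cdot)\smsh P(\cdot)\smsh\cdots\smsh P(\cdot)\smsh L(\cdot)$, tensored with cells coming from the simplicial sets $\bar{w}(-)$. Smashing a fixed cofibrant spectrum (here the appropriate $P(\cdot)$'s and $L(\cdot)$'s, together with the cellular simplicial factors) with a homotopy-fibre sequence of cofibrant spectra again gives a homotopy-fibre sequence, because on cofibrant objects smashing with a fixed object is a homotopy-cofibre-preserving left Quillen functor and we are in a stable setting. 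Finite wedges and the (homotopy) colimits that assemble the coend and the geometric realization all preserve homotopy-fibre sequences for the same stable reason. Hence $R\mapsto B(R,P,L)$ carries homotopy-fibre sequences to homotopy-fibre sequences.

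Next I would treat the right-hand side $C(B(R,P,\mathsf{1}),BP,B(\mathsf{1},P,L))$. The inner $B(R,P,\mathsf{1})$ is handled exactly as above: it is an exact functor of $R$. The two-sided cobar construction $C(M,Q,N)$ in the first variable $M$ is, at each level $I$, an end $\Map_{T}(\bar{w}(T),\dots)$ of diagrams whose value involves $M$ smashed with cofibrant factors coming from $BP$ and $B(\mathsf{1},P,L)$ and from the reversed cellular simplicial sets. Smashing $M$ with fixed cofibrant spectra is exact as before; the mapping-object/end construction $\Map_{T}(\bar{w}(T)^{\mathsf{rev}},-)$ is a homotopy limit over a Reedy category applied to a Reedy fibrant diagram (this is precisely the kind of statement established in the proof of Proposition \ref{prop:bar-cobar-homotopy}, using that $\bar{w}(-)^{\mathsf{rev}}$ is Reedy cofibrant and that every spectrum is fibrant), so it preserves homotopy-fibre sequences; and the totalization over the cosimplicial direction of the two-sided cobar construction is again a homotopy limit, hence exact. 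Composing these, $R\mapsto C(B(R,P,\mathsf{1}),BP,B(\mathsf{1},P,L))$ is an exact functor of $R$.

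The main obstacle I anticipate is bookkeeping rather than conceptual: one has to be careful that the various coends, ends, wedges, and (co)simplicial realizations/totalizations really are homotopy colimits and homotopy limits on the nose for termwise-cofibrant input, i.e. that the relevant diagrams are Reedy cofibrant or Reedy fibrant as appropriate, so that no extra cofibrant/fibrant replacement is needed before the exactness arguments apply. This is exactly the point where the termwise-cofibrancy hypotheses on $R,R',R'',P,L$ (and the EKMM convention that all objects are fibrant) get used, and where one should cite the Reedy-cofibrancy of $\bar{w}(-)$ and $\bar{w}(-)^{\mathsf{rev}}$ from Proposition \ref{prop:bar-cobar-homotopy}. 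Once that is in place, the fact that $\spectra$ is a stable model category — so that homotopy-fibre and homotopy-cofibre sequences agree and both are preserved by finite homotopy limits and by homotopy colimits — finishes the argument on both sides.
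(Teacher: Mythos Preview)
Your approach is essentially the same as the paper's: decompose each side into elementary building blocks (smash products with fixed cofibrant spectra, coends/realizations, ends/totalizations over Reedy categories) and use stability plus the Reedy cofibrancy of the tree-indexed diagrams of simplicial sets to conclude exactness. The paper is more terse --- it simply cites \cite[20.5]{arone/ching:2009} for the bar side and gives the Reedy argument for the cobar side --- but the content is the same. One small point to correct: the two-sided cobar construction $C(M,Q,N)(I)$ is an end over the category $\tilde{\mathsf{T}}(I)$ of \emph{generalized} $I$-trees with the simplicial sets $w(T)_+$ (not $\bar{w}(T)^{\mathsf{rev}}$ over $\mathsf{T}(I)$); the Reedy-cofibrancy argument you invoke from Proposition~\ref{prop:bar-cobar-homotopy} still applies in that setting, so this does not affect your argument.
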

\begin{proof}
The bar construction preserves fibre sequences (which are the same as cofibre sequences) in its right module term by Lemma 20.5 of \cite{arone/ching:2009}. It is sufficient to show that the cobar construction preserves fibre sequences in the right comodule term. Recall from \cite{ching:2005a} that
\[ C(-,Q,L)(I) = \Map_{T \in \tilde{\mathsf{T}}(I)}(w(T)_+,(-,Q,L)(T)) \]
where $\tilde{\mathsf{T}}(I)$ is the category of generalized $I$-trees and $w(-)_+$ is a certain diagram of simplicial sets indexed by $\tilde{\mathsf{T}}(I)$. By a similar argument to that in the proof of Proposition \ref{prop:bar-cobar-homotopy}, the diagram $w(-)_+$ is Reedy cofibrant.

The functor $(-,Q,L)(I)$ takes homotopy-fibre sequences of right $Q$-comodules to homotopy-fibre sequences of diagrams $\tilde{\mathsf{T}}(I) \to \spectra$. All the morphisms in the Reedy category $\tilde{\mathsf{T}}(I)$ increase degree (where the degree is the number of vertices), so homotopy-fibre sequences in the Reedy model structure on these diagrams are just the objectwise homotopy-fibre sequences. The mapping spectrum construction $\Map_{T \in \tilde{\mathsf{T}}(I)}(w(T)_+,-)$ takes these to homotopy-fibre sequences of spectra, as required.
\end{proof}

This Lemma allows us to reduce to the case where $R$ is a trivial right $P$-module. We next analyze the cobar construction in the target of $\delta$ in that case.

\begin{definition}[Dual composition product] \label{def:cocomprod}
Given two symmetric sequences $A_0,A_1$, the \emph{dual composition product} $A_1 \; \hat{\circ} \; A_0$ is the symmetric sequence with
\[ (A_1 \; \hat{\circ} \; A_0)(I) := \prod_{I \epi J} A_1(J) \smsh \Smsh_{j \in J} A_0(I_j). \]
Strictly, the indexing here is over the set of \emph{isomorphism classes} of surjections from $I$ to another finite set $J$, where $f:I \epi J$ and $f':I \epi J'$ are isomorphic if there is a bijection $\sigma:J \to J'$ such that $\sigma f = f'$. This set of isomorphism classes is in one-to-one correspondence with the set of unordered partitions of $I$ into nonempty subsets.

Because the smash product does not commute with products in $\spectra$, the dual composition product $\; \hat{\circ} \;$ does not define an associative monoidal structure on the category of symmetric sequences. However, we can still define objects that play the role of iterations of $\; \hat{\circ} \;$. Given $A_0,\dots,A_n$, we define the \emph{iterated dual composition product} by taking
\[ (A_n \; \hat{\circ} \; A_{n-1} \; \hat{\circ} \; \dots \; \hat{\circ} \; A_1 \; \hat{\circ} \; A_0)(I) \]
to be
\[ \prod_{I \epi J^{(1)} \epi \dots \epi J^{(n)}} A_n(J^{(n)}) \smsh \Smsh_{j \in J^{(n)}} A_{n-1}(J^{(n-1)}_j) \smsh \dots \smsh \Smsh_{j \in J^{(1)}} A_0(I_j). \]
The product is indexed by isomorphism classes of sequences of surjections $I \epi J^{(1)} \epi \dots \epi J^{(n)}$ of finite sets.

The cobar construction $C(S,Q,N)$, for a right $Q$-comodule $S$ and left $Q$-comodule $N$, is isomorphic to the totalization of a cosimplicial symmetric sequence with $k$-cosimplices given by
\[ S \; \hat{\circ} \; \overbrace{Q \; \hat{\circ} \; \dots \; \hat{\circ} \; Q}^k \; \hat{\circ} \; N. \]
The coface maps are determined by the cooperad decomposition on $Q$ and the comodule structures on $S$ and $N$. The codegeneracy maps are determined by the counit map $Q \to \mathsf{1}$ for the cooperad $Q$. These are referred to in \cite[7.15]{ching:2005a} and are spelled out in more detail in \cite{ching:2005c}.
\end{definition}

\begin{lemma} \label{lem:cobar-trivial}
Let $P$ be a reduced operad, $R$ a trivial right $P$-module and $L$ any left $P$-module. Suppose that $R$, $P$ and $L$ are all termwise-cofibrant. Then there is a weak equivalence of symmetric sequences
\[ \delta': C(BR,BP,BL) \weq R \; \hat{\circ} \; BL \]
where $BR = B(R,P,\mathsf{1})$ and $BL = B(\mathsf{1},P,L)$.
\end{lemma}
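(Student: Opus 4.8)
The plan is to exploit the cosimplicial description of the cobar construction recalled in Definition \ref{def:cocomprod}. Since $R$ is a trivial right $P$-module, the bar construction $BR = B(R,P,\mathsf{1})$ splits, just as in the proof of Lemma \ref{lem:omega-sigma}: the $P$-module structure maps are trivial, so the faces of the relevant simplices collapse and $BR(I)$ becomes a wedge over isomorphism classes of (generalized) $I$-trees of smash products of copies of $R$ (suitably suspended), with all the internal structure maps trivial. Consequently $BR$ is, as a $BP$-comodule, a trivial comodule based on the symmetric sequence $R$ — more precisely, the only nontrivial piece of the comodule structure is concentrated on the `corolla' part. This is the key structural input and mirrors Remark \ref{rem:free-precooperad}: a trivial module bars to (essentially) a trivial comodule.

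Given this, the cobar complex computing $C(BR,BP,BL)$ has $k$-cosimplices $BR \varcirc \overbrace{BP \varcirc \cdots \varcirc BP}^{k} \varcirc BL$, with codegeneracies built from the counit $BP \to \mathsf{1}$. Because $BR$ is a trivial comodule, the coface map $d^0$ coming from the $BR$-comodule structure factors (up to the trivial part) through the projection $BR \to R$, and the remaining cofaces are those of the cobar complex computing $C(\mathsf{1},BP,BL)$ smashed with $R$ on the outside. I would therefore identify the cosimplicial object, up to levelwise weak equivalence, with $R \varcirc (\text{cobar complex for } C(\mathsf{1},BP,BL))$. The cobar complex for $C(\mathsf{1},BP,BL)$ has an extra codegeneracy (the standard `comodule coaugmentation' argument: $\mathsf{1}$ is the counit), so its totalization is weakly equivalent to $BL$ itself; smashing/composing with $R$ gives the target $R \varcirc BL$. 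The map $\delta'$ is precisely the composite exhibiting this, and the above shows each step is a weak equivalence — here one uses termwise-cofibrancy of $R$, $P$, $L$ (hence of $BR$, $BP$, $BL$ by Proposition \ref{prop:bar-cobar-homotopy} and \cite[11.14]{arone/ching:2009}) to ensure the smash products and totalizations behave homotopically, and that the relevant cosimplicial symmetric sequences are Reedy fibrant so that the extra-codegeneracy argument computes the homotopy totalization.

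The main obstacle I expect is making the `extra codegeneracy' argument rigorous at the level of \emph{homotopy} totalization rather than strict totalization: the dual composition product $\varcirc$ is not associative (smash does not commute with the products appearing in Definition \ref{def:cocomprod}), so one cannot blithely manipulate these iterated products as though working in a monoidal category. I would handle this by working termwise on a fixed finite set $I$, where each cosimplicial spectrum $C(BR,BP,BL)(I)$ is an honest cosimplicial object in $\spectra$, checking Reedy fibrancy (every object of $\spectra$ is fibrant and the matching maps are fibrations for the relevant indexing category, exactly as in Proposition \ref{prop:bar-cobar-homotopy} and Lemma \ref{lem:cobar-fibre}), and then applying the standard fact that a cosimplicial object with an extra codegeneracy has totalization weakly equivalent to its $0$-cosimplices. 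The bookkeeping of which surjections $I \epi J^{(1)} \epi \cdots$ contribute, and the verification that the trivial-comodule structure really does kill all but the corolla terms in $d^0$, is the routine-but-delicate part; I would reference \cite{ching:2005c} for the explicit formulas for the coface and codegeneracy maps rather than rederive them.
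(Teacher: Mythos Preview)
Your overall strategy---reduce to a cosimplicial object with an extra codegeneracy---is the same as the paper's, but the structural identification you base it on is wrong, and this breaks the argument.

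The claim that $BR$ is a \emph{trivial} $BP$-comodule (``based on $R$'') is not correct. What triviality of the $P$-module structure on $R$ buys you is an isomorphism of symmetric sequences
\[
  BR \;=\; B(R,P,\mathsf{1}) \;\isom\; R \circ B(\mathsf{1},P,\mathsf{1}) \;=\; R \circ BP,
\]
and the $BP$-comodule structure on this is the \emph{extended} (cofree-type) one, coming from the cooperad decomposition on the $BP$ factor. It is very much nontrivial: there is no ``projection $BR \to R$'' through which $d^0$ factors. Consequently the $k$-cosimplices are not $R \varcirc BP^{\varcirc k} \varcirc BL$ as your identification with ``$R \varcirc (\text{cobar for }C(\mathsf{1},BP,BL))$'' would predict; they are $(R \circ BP) \varcirc BP^{\varcirc k} \varcirc BL$.

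The paper's proof addresses exactly the point you sidestep: the mismatch between $\circ$ (coproduct-indexed) and $\varcirc$ (product-indexed). Termwise on a fixed $I$, one expands $(R \circ BP)(J^{(k+1)})$ as a finite coproduct over further surjections $J^{(k+1)} \epi J^{(k+2)}$ and uses the canonical weak equivalence from a finite wedge to the corresponding product of spectra. This produces a levelwise weak equivalence of cosimplicial objects
\[
  BR \varcirc BP^{\varcirc k} \varcirc BL \;\weq\; R \varcirc BP \varcirc BP^{\varcirc k} \varcirc BL,
\]
i.e.\ the target has \emph{one extra} copy of $BP$. It is this extra $BP$ that supplies the extra codegeneracy (coaugmentation over $R \varcirc BL$), not a putative trivial comodule structure on $BR$ or a comparison with $C(\mathsf{1},BP,BL)$. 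Your proposal never makes this $\circ$-to-$\varcirc$ step, and without it the extra-codegeneracy argument has nothing to act on.
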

\begin{proof}
We construct $\delta'$ in two steps. First, we claim that there is a levelwise weak equivalence of cosimplicial objects
\[ \tag{*} BR \; \hat{\circ} \; \overbrace{BP \; \hat{\circ} \; \dots \; \hat{\circ} \; BP}^k \; \hat{\circ} \; BL \weq R \; \hat{\circ} \; BP \; \hat{\circ} \; \overbrace{BP \; \hat{\circ} \; \dots \; \hat{\circ} \; BP}^k \; \hat{\circ} \; BL. \]
Note that this is not as simple as giving a map $BR \to R \; \hat{\circ} \; BL$ because the iterated dual composition product on the right-hand side is not obtained by iterating that on the left with another such product. Instead we have to define this term by term.

First note that
\[ BR = B(R,P,\mathsf{1}) \isom R \circ B(\mathsf{1},P,\mathsf{1}) = R \circ BP \]
because the right $P$-module structure on $R$ is trivial. The left-hand side of (*) is a product of terms of the form
\[ (R \circ BP)(J^{(k+1)}) \smsh BP(J^{(k)}_j) \smsh \dots \]
where $I \epi J^{(1)} \epi \dots \epi J^{(k+1)}$ is a sequence of surjections of finite sets.

Each of these terms is a coproduct indexed over surjections $J^{(k+1)} \epi J^{(k+2)}$ of terms of the form
\[ R(J^{(k+2)}) \smsh BP(J^{(k+1)}_j) \smsh \dots \]
The map (*) can then be defined using the canonical map from coproduct to product:
\[ \begin{diagram}
  \node{\left[\prod_{I \epi J^{(1)} \epi \dots \epi J^{(k+1)}} \left[ \coprod_{J^{(k+1)} \epi J^{(k+2)}} R(J^{(k+2)}) \smsh \dots \right]\right]} \arrow{s} \\
  \node{\left[\prod_{I \epi J^{(1)} \epi \dots \epi J^{(k+1)}} \left[\prod_{J^{(k+1)} \epi J^{(k+2)}} R(J^{(k+2)}) \smsh \dots \right] \right]}
\end{diagram} \]
since the target is now isomorphic to the iterated dual composition product with $k+3$ terms. The map (*) as defined is a weak equivalence because for spectra, the map from a finite coproduct to the corresponding finite product is a weak equivalence.

The weak equivalences (*) determine a weak equivalence on the totalizations:
\[ \delta'_1: C(BR,BP,BL) \to \Tot(R \; \hat{\circ} \; BP \; \hat{\circ} \; \overbrace{BP \; \hat{\circ} \; \dots \; \hat{\circ} \; BP}^{\bullet} \; \hat{\circ} \; BL). \]
The cosimplicial object on the right-hand side is coaugmented over $R \; \hat{\circ} \; BL$ and has extra codegeneracies that arise from the `extra' `$BP$' term. (See \cite[1.17]{arone/ching:2009}.) It follows that there is a weak equivalence
\[ \delta'_2: \Tot(R \; \hat{\circ} \; BP \; \hat{\circ} \; \overbrace{BP \; \hat{\circ} \; \dots \; \hat{\circ} \; BP}^{\bullet} \; \hat{\circ} \; BL) \weq R \; \hat{\circ} \; BL. \]
Taking $\delta' = \delta'_2 \delta'_1$ gives us the weak equivalence
\[ \delta': C(BR,BP,BL) \weq R \; \hat{\circ} \; BL \]
required of the Lemma.
\end{proof}

\begin{proof}[Proof of Proposition \ref{prop:CB}]
First we verify that
\[ \delta: B(R,P,L) \to C(BR,BP,BL) \]
is a weak equivalence for trivial right $P$-modules $R$. To see this, we compose $\delta$ with the weak equivalence $\delta'$ of Lemma \ref{lem:cobar-trivial}. Notice also that we have an isomorphism
\[ R \circ BL  \isom B(R,P,L). \]
With respect to this isomorphism, $\delta' \delta$ is the canonical weak equivalence
\[ R \circ BL \weq R \; \hat{\circ} \; BL \]
from a finite coproduct of spectra to the corresponding product. Since $\delta'$ is also a weak equivalence, it follows that $\delta$ is a weak equivalence.

Now consider an arbitrary termwise-cofibrant right $P$-module $R$. Recall that we have a `truncation tower' for $R$ (just as we had for an operad $P$ in the proof of Theorem \ref{thm:W-CB}). This consists of a collection of fibre sequences
\[ R_{=n} \to R_{\leq n} \to R_{\leq(n-1)} \]
of right $P$-modules where $R_{\leq n}(I)$ is trivial for $|I| > n$ and is equal to $R(I)$ otherwise. The $P$-module structure maps are either equal to those for $R$, or are trivial as appropriate. The fibres $R_{=n}$ are trivial right $P$-modules concentrated in terms $R_{=n}(I)$ where $|I| = n$. Applying $\delta$, we get a diagram
\[ \begin{diagram}
  \node{B(R_{=n},P,L)} \arrow{e} \arrow{s,l}{\sim} \node{B(R_{\leq n},P,L)} \arrow{e} \arrow{s} \node{B(R_{\leq(n-1)},P,L)} \arrow{s} \\
  \node{C(BR_{=n},BP,BL)} \arrow{e} \node{C(BR_{\leq n},BP,BL)} \arrow{e} \node{C(BR_{\leq(n-1)},BP,BL)}
\end{diagram} \]
The rows are homotopy-fibre sequences by \cite[20.5]{arone/ching:2009} and Lemma \ref{lem:cobar-fibre}. The left-hand vertical map is an equivalence because $R_{=n}$ is a trivial right $P$-module. By induction on $n$, it follows that all the vertical maps in such diagrams are equivalences. Finally, since the $I$-terms of each side of the map $\delta$ depend only on $R(J)$ for $|J| \leq |I|$, it follows that $\delta$ is an equivalence for all right modules $R$.
\end{proof}

\bibliographystyle{amsplain}
\bibliography{../mcching}

\end{document}